\newcommand{\calA}{\mathcal{A}}
\newcommand{\calC}{\mathcal{C}}
\newcommand{\calD}{\mathcal{D}}
\newcommand{\calH}{\mathcal{H}}
\newcommand{\calL}{\mathcal{L}}
\newcommand{\calO}{\mathcal{O}}
\newcommand{\calQ}{\mathcal{Q}}
\newcommand{\calS}{\mathcal{S}}
\newcommand{\calW}{\mathcal{W}}
\newcommand{\calX}{\mathcal{X}}
\newcommand{\ZZ}{\mathbb{Z}}
\newcommand{\RR}{\mathbb{R}}
\newcommand{\TT}{\mathbb{T}}
\newcommand{\kk}{\Bbbk}
\newcommand{\rmV}{\mathrm{V}}
\newcommand{\bfa}{\mathbf{a}}
\newcommand{\eb}{\mathbf{e}}
\newcommand{\nb}{\mathbf{n}}
\newcommand{\bfu}{\mathbf{u}}
\newcommand{\vb}{\mathbf{v}}
\newcommand{\bfx}{\mathbf{x}}
\newcommand{\fkp}{\mathfrak{p}}
\newcommand{\fkC}{\mathfrak{C}}
\newcommand{\fkX}{\mathfrak{X}}
\newcommand{\Hom}{\operatorname{Hom}}
\def\opn#1#2{\def#1{\operatorname{#2}}} % to make operators
\opn\Cl{Cl} \opn\conv{conv} \opn\deg{deg} \opn\rank{rank} \opn\Spec{Spec} \opn\Stab{Stab} \opn\aff{aff} \opn\div{div} \opn\GL{GL}
\opn\cone{cone} \opn\End{End} \opn\Hom{Hom} \opn\mod{mod} \opn\gldim{gldim} \opn\pdim{pdim} \opn\diag{diag} \opn\vert{vert}
\opn\Block{Block} \opn\Pyr{Pyr} \opn\max{max} \opn\min{min} \opn\ini{in} \opn\rev{rev} \opn\ker{ker} \opn\lat{lat} \opn\pull{pull} \opn\rev{rev} \opn\Sym{Sym} \opn\supp{supp} \opn\int{int} \opn\star{star} \opn\sign{sign}
\newtheorem{thm}{Theorem}[section]
\newtheorem{lemma}[thm]{Lemma}
\newtheorem{lem}[thm]{Lemma}
\newtheorem{prop}[thm]{Proposition}
\newtheorem{conj}[thm]{Conjecture}
\theoremstyle{definition}
\newtheorem{definition}[thm]{Definition}
\newtheorem{defi}[thm]{Definition}
\newtheorem{ex}[thm]{Example}
\theoremstyle{remark}
\newtheorem{remark}[thm]{Remark}
\begin{document}

\title{Conic divisorial ideals of toric rings and applications to Hibi rings and stable set rings}
\author{Koji Matsushita}
\address{Graduate School of Mathematical Sciences, University of Tokyo, Komaba, Meguro-ku, Tokyo 153-8914, Japan}
\email{koji-matsushita@g.ecc.u-tokyo.ac.jp}

% \address{Department of Pure and Applied Mathematics, Graduate School of Information Science and Technology, Osaka University, Suita, Osaka 565-0871, Japan}
% \email{k-matsushita@ist.osaka-u.ac.jp}

\subjclass[2020]{
Primary 13C14; %Cohen-Macaulay modules
Secondary 13F65, %Commutative rings defined by binomial ideals, toric rings, etc
14M25, %Toric varieties, Newton polyhedra
16S38, %Rings arising from noncommutative algebraic geometry
05C25, %Graphs and abstract algebra
%52B20. %52B20 Lattice polytopes in convex geometry
}
\keywords{Conic divisorial ideals, Toric rings, Quasi-symmetric representations, Weakly-symmetric representations, Non-commutative crepant resolutions, Hibi rings, Stable set rings}

\maketitle

%%%---abstract---%%%
\begin{abstract}
In this paper, we study conic divisorial ideals of toric rings.
We provide an idea to determine them and we give a description of the conic divisorial ideals of Hibi rings and stable set rings of perfect graphs by using this idea.
We also characterize when Hibi rings or stable set rings are quasi-symmetric or weakly-symmetric.
Moreover, by using the description of the conic divisorial ideals, we construct a non-commutative crepant resolution (NCCR) of a special family of stable set rings. 
\end{abstract}

%%%%%%%%%%%%%%%%%%%%%%%%%%%%%%%%%%%%%%%%%%%%%%%%%%%%%%%%%%%%%%%%%%%%%%%%%%%%%%%%%%%%%%%%%%%%%%%%%%%%%%%%%%%
%%%%%%%%%%%%%%%%%%%%%%%%%%%%%%%%%%%%%%%%%%%%%%%%%%%%%%%%%%%%%%%%%%%%%%%%%%%%%%%%%%%%%%%%%%%%%%%%%%%%%%%%%%%

\section{Introduction}

Throughout this paper, let $\kk$ be an algebraically closed field of characteristic $0$, for simplicity.

\subsection{Backgrounds}
Let $C\subset \RR^d$ be a $d$-dimensional rational polyhedral cone.
We define the toric ring $R=\kk[C\cap \ZZ^d]$ as
\[
R=\kk[C\cap\ZZ^d]=\kk[t_1^{\alpha_1}\cdots t_d^{\alpha_d} : (\alpha_1,\ldots,\alpha_d)\in C\cap\ZZ^d].
\]
Toric rings are normal affine semigroup rings (and hence Cohen-Macaulay rings), and are of particular interest in the area of combinatorial commutative algebra.
For example, the Ehrhart ring of a lattice polytope is a typical example of toric rings and its commutative ring-theoretical properties have been well investigated.

Recently, conic divisorial ideals, which are a certain class of divisorial ideals (rank one reflexive modules) defined on toric rings, and their applications are well studied (see, e.g., \cite{Bru, BG1}, and so on).
It is known that up to isomorphism the conic divisorial ideals of $R$ are exactly the direct summands of $R^{1/k}$ for $k \gg 0$ (\cite[Proposition 3.6]{BG1}, \cite[Proposition 3.2.3]{SmVdB}), where $R^{1/k}=\kk[C \cap (1/k\ZZ)^d]$ is regarded as an $R$-module.
Since $R^{1/k}$ is a maximal Cohen-Macaulay (MCM, for short) $R$-module, conic divisorial ideals of $R$ are also MCM $R$-modules.

Conic divisorial ideals play important roles in the theory of non-commutative algebraic geometry as well as commutative algebra.
In fact, the following theorem says that the endomorphism ring of the direct sum of ``all" conic divisorial ideals of $R$ is an NCR of $R$:

\begin{thm}[{\cite[Corollary 6.2]{FMS}, \cite[Proposition 1.8]{SpVdB}}]\label{thm2}
For $k \gg 0$, the global dimension of $E:=\End_R(R^{1/k})$ is equal to $d$, and hence $E$ is a non-commutative resolution (NCR) of $R$.
\end{thm}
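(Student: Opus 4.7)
The plan is to verify the two axioms defining a non-commutative resolution of $R$: that $\End_R(R^{1/k})$ is the endomorphism ring of a finitely generated reflexive module containing $R$ as a direct summand, and that it has finite global dimension. The first axiom is essentially built in, since $R^{1/k}$ is reflexive and $R = R^{1/1}$ is an $R$-module summand of $R^{1/k}$ for every $k \geq 1$; the content lies in the second axiom.

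I would begin by decomposing $R^{1/k}$ along its natural $\ZZ^d$-grading: as an $R$-module,
\[
R^{1/k} \;\cong\; \bigoplus_{\bar v \,\in\, (1/k)\ZZ^d / \ZZ^d} I_{\bar v},
\]
where each $I_{\bar v}$ is a divisorial ideal of $R$ and, by construction, is conic. Invoking the result already referenced in the excerpt (\cite[Proposition 3.6]{BG1}, \cite[Proposition 3.2.3]{SmVdB}), for $k$ sufficiently large the family $\{[I_{\bar v}]\}$ exhausts every isomorphism class of conic divisorial ideal of $R$. Writing $R^{1/k} \cong \bigoplus_{i=1}^s M_i^{\oplus a_i}$ with $M_1,\ldots,M_s$ the distinct (up to isomorphism) conic summands and setting $M := \bigoplus_i M_i$, the endomorphism ring $\End_R(R^{1/k})$ is Morita equivalent to $\End_R(M)$. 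Hence it suffices to show $\gldim \End_R(M) < \infty$.

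For the finite global dimension I would follow the Špenko--Van den Bergh strategy. Realize $R$ as the invariant ring $S^T$ of a polynomial ring $S = \kk[x_1,\ldots,x_n]$ under an algebraic torus $T$ via the Cox-type presentation of the affine toric variety, so that the $T$-weight decomposition of $S$ recovers every divisorial ideal of $R$. The conic weights cut out a $T$-stable direct summand of $S$ whose endomorphism ring as an $R$-module is naturally identified with a corner ring $e(S \mathbin{\#} T) e$ of the skew group algebra $S \mathbin{\#} T$, for a suitable idempotent $e$. Since $S$ is regular and $T$ is linearly reductive (this uses $\operatorname{char}\kk = 0$, as stipulated at the start of the paper), $S \mathbin{\#} T$ has finite global dimension bounded by $\dim S$; cutting by the idempotent $e$ preserves finite global dimension.

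The principal obstacle is the last step: confirming that the idempotent truncation indeed preserves finite global dimension, and more subtly that the chosen module $M$ assembled from conic divisorial classes corresponds precisely to the correct idempotent in $S \mathbin{\#} T$. This matching between conic classes and $T$-weights whose associated line bundles lie in a specific "window" is the combinatorial heart of the argument, and it is precisely the content of \cite[Corollary 6.2]{FMS} and \cite[Proposition 1.8]{SpVdB}; the verification rests on a Koszul-type resolution of the diagonal bimodule on the toric side.
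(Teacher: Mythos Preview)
The paper does not prove Theorem~\ref{thm2} at all: it is quoted verbatim from the literature (Faber--Muller--Smith and \v{S}penko--Van den Bergh) as a motivational background result in the introduction, and no argument is supplied. So there is no ``paper's own proof'' to compare against.

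As for your sketch, it is a fair high-level outline of the \v{S}penko--Van den Bergh approach, but it is not a proof in its own right. The decomposition of $R^{1/k}$ into conic divisorial ideals and the Morita reduction are unproblematic. The step where you pass to a corner $e(S\# T)e$ of the skew group algebra is more delicate than you indicate: cutting by an arbitrary idempotent does \emph{not} preserve finite global dimension, and the fact that the conic window gives an idempotent with this property is exactly the nontrivial content of the cited references (resting on Koszul-type complexes and a careful inductive argument over characters separated from the window). You acknowledge this in your last paragraph, but that means your proposal is really a reduction to the cited theorems rather than an independent proof of the statement.
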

%\noindent This theorem implies that the endomorphism ring of the direct sum of all conic divisorial ideals of $R$ is an NCR of $R$. 

Moreover, the endomorphism ring of the direct sum of ``some" conic modules of $R$ may be a non-commutative crepant resolution (NCCR), which is a special class of NCRs and was introduced by Van den Bergh (\cite{VdB3}) in the context of non-commutative algebraic geometry.
In particular, NCCRs constructed in this way are called toric NCCRs.
Although toric rings always have NCRs as mentioned in Theorem~\ref{thm2}, there exists an example of a toric ring which has no toric NCCRs (\cite[Example 9.1]{SpVdB}).
Nevertheless, various classes of toric rings admitting toric NCCRs have also been extensively studied.
For instance, it is known that the following toric rings have toric NCCRs:
\begin{itemize}
%\item an NCCR of each quotient singularity by a finite abelian group (which is a toric ring associated with a simplicial cone) is given (see, e.g., \cite{IW, VdB}); 
\item Gorenstein toric rings whose class groups are $\ZZ$ (\cite{VdB3}); 
\item Gorenstein Hibi rings whose class groups are $\ZZ^2$ (\cite{Nak2}); 
\item $3$-dimensional Gorenstein toric rings (\cite{Bro, IU, SpVdB2}); 
\item Segre products of polynomial rings (\cite{HN});
\item Gorenstein edge rings of complete multipartite graphs (\cite{HM}).
%\item the toric ring $\Sym (W)^T$, where $W$ is a generic and quasi-symmetric representation of a torus $T$ (\cite{SpVdB}). 
%\item there are other results on the existence of an NCCR for toric rings (see \cite{SpVdB, SpVdB1}). 
\end{itemize} 
However, in general, the existence of NCCRs for Gorenstein toric rings is still open.

\medskip

In the construction of NCCRs, it is natural and important to classify MCM divisorial ideals (including conic ones) of certain class of toric rings. %since there are many applications of such classifications.
It has been investigated in some classes of toric rings.
For example, a classification of MCM divisorial ideals is given in the case of toric rings whose divisor class groups are $\ZZ$ or $\ZZ^2$ (\cite{Sta1,VdB1}).
Furthermore, a description of conic divisorial ideals is also given in the case of Hibi rings (\cite{HN}) and edge rings of complete multipartite graphs (\cite{HM}).

On the other hand, any toric ring whose divisor class group is isomorphic to $\ZZ^r$ can be described as the ring of invariants under an action of the torus $T\cong (\kk^{\times})^r$ on the symmetric algebra $\Sym W$ of a $T$-representation $W$ (see, e.g., \cite[Theorem 2.1]{Bru}).
Suppose that $W$ is generic and unimodular. Then the following facts are known:
\begin{itemize}
\item If $W$ is quasi-symmetric, then the toric ring $\Sym (W)^T$ has a toric NCCR (\cite[Theorem 1.19]{SpVdB}).
\item Suppose that $W$ is weakly-symmetric. Then, a description of the MCM divisorial ideals of the toric ring $\Sym (W)^T$ is given (\cite[Corollary 3.7]{SpVdB4}).
Moreover, all toric NCCRs of $\Sym (W)^T$ are derived equivalent (\cite[Theorem 1.3]{SpVdB4}).
\end{itemize}
Quasi-symmetric (resp. weakly-symmetric) representations are defined by using its weights (see Definition~\ref{def_qua}), and other properties associated with them are studied in \cite{SpVdB} (resp. \cite{SpVdB4}).
Since we want to determine MCM divisorial ideals and construct NCCRs, we are also interested in when toric rings are quasi-symmetric or weakly-symmetric.

\medskip

In light of the above observations and current situation, it is important to address the following three problems:
\begin{itemize}
    \item Describe the conic divisorial ideals of given toric rings.
    \item Characterize the conditions under which $W$ is quasi-symmetric or weakly-symmetric for a given toric ring $R\cong \Sym (W)^T$.
    \item Give a new family of toric rings which have toric NCCRs.
\end{itemize}
In this paper, we study the above problems for two classes of toric rings, \textit{Hibi rings} and \textit{stable set rings}.

\medskip

\subsection{Hibi rings}
Hibi introduced a class of normal Cohen-Macaulay domains $\kk[P]$ arising from posets $P$ (\cite{Hibi}),
which is a quotient of a polynomial ring.
On the other hand, in this paper, our description of toric ring $\kk[P]$ (see Section~\ref{subsec_Hibi}) is based on the order polytope of $P$ introduced by Stanley (\cite{Sta2}), and it seems different from the original one.
However, it is known that they are isomorphic.
Nowadays, the toric rings $\kk[P]$ are called Hibi rings of posets $P$.

The Hibi ring of a poset $P$ is a typical example of an algebra with straightening laws domain on $P$ and is of interest in the area of combinatorial commutative algebra. %Hibi rings is Segre products of polynomial rings (see, e.g., \cite[Example 2.6]{HN}).
In fact, algebraic properties of Hibi rings have been well studied as mentioned in the previous subsection. 
Moreover, the relationships between Hibi rings and other toric rings, such as stable set rings and edge rings, were investigated when their divisor class groups have small rank (\cite{HM3}).

\medskip

\subsection{Stable set rings}
The terminology ``stable set ring'' is used in \cite{HS}, so we also use it. 
The stable set ring $\kk[\Stab_G]$ of a finite simple graph $G$ is a $\kk$-algebra defined from the stable set polytope $\Stab_G$, and stable set polytopes were introduced by Chv\'{a}tal (\cite{Ch}).
It is known that the stable set ring $\kk[\Stab_G]$ of a perfect graph $G$ coincides with the Ehrhart ring of the stable set polytope, and hence $\kk[\Stab_G]$ can be regarded as the toric ring arising from a rational polyhedral cone.
In addition, the facets of stable set polytopes are completely characterized in the case of perfect graphs (\cite{Ch}).
Thus, stable set rings and stable set polytopes behave well for perfect graphs.
Moreover, stable set polytopes of perfect graphs include a remarkable class of another kind of polytopes arising from posets, which are called chain polytopes and were also introduced by Stanley (\cite{Sta2}).

Recently, algebraic properties of stable set rings have been well studied as well as Hibi rings. 
For example, in \cite{HM3}, divisor class groups of stable set rings of perfect graphs $G$ are completely characterized in terms of $G$.
Furthermore, perfect graphs $G$ whose stable set rings have the class groups $\ZZ$ or $\ZZ^2$ are also characterized, and in this case, each stable set ring of $G$ is isomorphic to a certain Hibi ring.

\medskip

\subsection{Main Results}\label{sec_main}
First, we give a description of the conic divisorial ideals of our toric rings.
We present an approach to determine a region representing conic classes in the divisor class group of a toric ring (Lemma~\ref{conic}).
By using this lemma, we get the following theorems:
\begin{thm}[{see Theorem~\ref{thm:conic_hibi}}] 
\label{conic_thm_intro}
Let $\kk[P]$ be the Hibi ring of a poset $P$ that the Hasse diagram of $\widehat{P}$ has $d+1$ vertices and $n$ edges. 
Then, the conic divisorial ideals of $\kk[P]$ one-to-one correspond to the points in $\calC(P)\cap\ZZ^{n-d}$
$($see $(\ref{ccp})$ for the precise definition of $\calC(P)$$)$. 
\end{thm}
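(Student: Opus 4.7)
The plan is to specialize Lemma~\ref{conic} (the general criterion expressing the conic classes of a toric ring as the lattice points of an explicit region inside its divisor class group) to the Hibi setting.

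I would first recall from Section~\ref{subsec_Hibi} that $\kk[P]$ can be realized as the toric ring associated with the cone over the order polytope of $P$. Under this realization, the facets of the cone correspond bijectively to the covering relations of $\widehat P$: each cover $x \lessdot y$ in $\widehat P$ (interior, or of the form $\hat 0 \lessdot x$ or $x \lessdot \hat 1$) produces one facet inequality of the order polytope. The cone thus has $n$ facets inside a lattice of rank $d$, and the standard exact sequence
\begin{equation*}
0 \to \ZZ^d \longrightarrow \ZZ^n \longrightarrow \Cl(\kk[P]) \to 0
\end{equation*}
identifies $\Cl(\kk[P]) \cong \ZZ^{n-d}$, matching the rank in the statement. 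A convenient splitting is obtained by fixing a spanning tree of the Hasse diagram of $\widehat P$: its $d$ edges trivialize $\ZZ^d$, while the remaining $n - d$ non-tree edges serve as free generators of $\Cl(\kk[P])$.

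Next, I would invoke Lemma~\ref{conic}. For a toric ring presented as above, this lemma supplies an explicit region of $\Cl(\kk[P]) \otimes \RR$, cut out by finitely many inequalities involving the primitive facet normals, whose integer points are in bijection with the isomorphism classes of conic divisorial ideals. Under the spanning-tree splitting just described, each non-tree edge of the Hasse diagram of $\widehat P$ closes a unique fundamental cycle, and the conic inequality associated with that edge reduces to a combinatorial condition along the cycle. Matching these cycle-by-cycle inequalities with the defining inequalities of $\calC(P)$ in $(\ref{ccp})$ then yields the desired bijection between conic divisorial ideals and $\calC(P) \cap \ZZ^{n-d}$.

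The main obstacle is the explicit identification of the region produced by Lemma~\ref{conic} with $\calC(P)$: the former is phrased using ceiling conditions on an auxiliary parameter $\alpha \in \RR^d$ against the facet normals, whereas the latter is phrased purely combinatorially via oriented cycles in the Hasse diagram of $\widehat P$. Making the dictionary rigorous requires a careful and consistent choice of orientation for each cycle together with a compatible sign convention for the facet inequalities of the order polytope. Once this bookkeeping is fixed, each defining inequality of $\calC(P)$ can be read off from the corresponding cycle inequality produced by Lemma~\ref{conic}, and the claimed one-to-one correspondence follows.
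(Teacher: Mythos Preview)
Your outline correctly identifies the setup and the intention to use Lemma~\ref{conic}, but it skips the genuinely difficult half of the argument. Lemma~\ref{conic} does not \emph{supply} a region; it gives a criterion: if one proposes a candidate polytope $\calC'$ and verifies $\calW'(\kk[P])=\calC'$, then the conic classes are exactly $\calC\cap\ZZ^{n-d}$. One containment, $\calW'(\kk[P])\subset\calC'(P)$, is indeed the routine ``reading off inequalities from cycles'' that you describe (this is the content of Proposition~\ref{weight_hibi} and the computation around~(\ref{supp})). But the reverse containment $\calC'(P)\subset\calW'(\kk[P])$ --- equivalently, showing that every lattice point of $\calC(P)$ actually arises as a conic class --- is not bookkeeping; it is the substantive step, and precisely the one that Remark~\ref{insu} flags as missing from the earlier proof in \cite{HN}.

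In the paper this step is handled by Lemma~\ref{vertex_hibi}: one must show that every vertex of $\calC'(P)$ lies in $\calW'(\kk[P])$, so that Lemma~\ref{conic}(ii) applies. A vertex of $\calC'(P)$ is cut out by $n-d$ hyperplanes of the form $\sum_i \vb(C_k)^{(e_{d+i})}z_i=|\supp^+(C_k)|$, and the claim is that such an intersection point is a vertex exactly when the positive and negative edge-supports of the $C_k$'s are disjoint; the ``if'' direction exhibits the vertex explicitly as $\sum_{e\in C^+}\beta_e\in\calW'$, while the ``only if'' direction requires a flow-space decomposition (writing $\vb(C_1)+\vb(C_2)$ as a sum of other cycle vectors when the supports overlap) to derive a contradiction. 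None of this is addressed in your proposal. Note also that $\calC(P)$ in~(\ref{ccp}) is defined using \emph{all} circuits of $\calH(\widehat P)$, not just the $n-d$ fundamental cycles, so your ``one inequality per non-tree edge'' picture is incomplete; the reduction from arbitrary cycles to circuits is a separate (easy) step at the end of the proof of Theorem~\ref{thm:conic_hibi}.
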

\noindent Actually, this result has already been given in \cite[Theorem 2.4]{HN}. However, its proof is insufficient (see Remark~\ref{insu} for details). 

\begin{thm}[{see Theorem~\ref{conic_stab}}] 
\label{conic_stab_intro}
Let $\kk[\Stab_G]$ be the stable set ring of a perfect graph $G$ with $n+1$ maximal cliques. 
Then, the conic divisorial ideals of $\kk[\Stab_G]$ one-to-one correspond to the points in $\calC(G)\cap\ZZ^n$ $($see $(\ref{conic_stab})$ for the precise definition of $\calC(G)$$)$.
\end{thm}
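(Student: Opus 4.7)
The plan is to imitate the proof of Theorem~\ref{conic_thm_intro} by applying the general criterion of Lemma~\ref{conic} to the specific toric description of $\kk[\Stab_G]$. First, I would realize $\kk[\Stab_G]$ as the affine semigroup ring $\kk[C_G \cap \ZZ^{d+1}]$, where $d = |V(G)|$ and $C_G$ is the cone over $\Stab_G \times \{1\} \subset \RR^{d+1}$. This is legitimate because $G$ is perfect, so $\kk[\Stab_G]$ coincides with the Ehrhart ring of $\Stab_G$. Chv\'{a}tal's theorem then gives a complete facet description of $\Stab_G$: beyond the coordinate hyperplanes $x_i = 0$, the facets are cut out by the clique inequalities $\sum_{i \in K} x_i \le 1$, one for each maximal clique $K$ of $G$. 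Hence the rays and facets of $C_G$ are completely determined by the vertices and the $n+1$ maximal cliques of $G$.

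Next I would read off the divisor class group from the standard exact sequence $0 \to M \to \ZZ^F \to \Cl(\kk[\Stab_G]) \to 0$, where $F$ is the set of facets of $C_G$ and $M$ the character lattice. The $d+1$ relations coming from $M$ (the $d$ coordinate directions plus the degree) reduce the $2d + n + 1$ facet generators down to a free abelian group of rank $n$, so divisorial ideals are naturally parametrized by $\ZZ^n$. Then I would apply Lemma~\ref{conic}: this lemma translates the conic condition on a divisor class into an explicit system of linear inequalities indexed by the facets of $C_G$. Substituting the clique and non-negativity inequalities from the previous step and intersecting with $\ZZ^n$ should yield exactly the region $\calC(G)$ defined in the theorem.

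The main obstacle will be the bookkeeping: one must fix a consistent identification of $\Cl(\kk[\Stab_G])$ with $\ZZ^n$, coming from a distinguished choice of $n$ of the $n+1$ maximal cliques (one maximal clique is eliminated by the degree relation), and then verify that the linear inequalities output by Lemma~\ref{conic} match the defining inequalities of $\calC(G)$. The computation is structurally parallel to the Hibi-ring case, but now the combinatorial input is the clique-incidence data of the perfect graph $G$ rather than the Hasse diagram of $\widehat{P}$, so one has to translate carefully between ``vertex $v$ belongs to the maximal clique $K$'' and ``the ray of $C_G$ through $\eb_v$ lies on the facet of $C_G$ coming from $K$.'' Once this dictionary is set up, the one-to-one correspondence between conic divisorial ideals and points of $\calC(G) \cap \ZZ^n$ is an immediate consequence of Lemma~\ref{conic}.
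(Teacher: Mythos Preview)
There is a genuine gap: you have misread what Lemma~\ref{conic} actually provides. That lemma does \emph{not} ``translate the conic condition into an explicit system of linear inequalities indexed by the facets of $C_G$.'' Rather, it is a two-step criterion: given a \emph{candidate} polytope $\calC'$ (which you must supply yourself), part~(ii) says that if $\calW'(R)\subset\calC'$ and every vertex of $\calC'$ lies in $\calW'(R)$, then $\calW'(R)=\calC'$; part~(i) then passes from $\calW'(R)=\calC'$ to $\calW(R)\cap\ZZ^r=\calC\cap\ZZ^r$. So the burden is on you to (a) write down the inequalities defining $\calC'(G)$, (b) check the easy inclusion $\calW'(\kk[\Stab_G])\subset\calC'(G)$, and most importantly (c) prove that every vertex of $\calC'(G)$ already belongs to $\calW'(\kk[\Stab_G])$. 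Your outline handles~(a) and~(b) implicitly but says nothing about~(c), which is the substantive step. In the paper this is Lemma~\ref{vertex_stab}, the direct analogue of Lemma~\ref{vertex_hibi} from the Hibi case: one takes $n$ supporting hyperplanes of the form~\eqref{equ_stab}, and shows that their intersection point is a vertex of $\calC'(G)$ if and only if the sets $X^+=\bigcup_k X_{I_kJ_k}^+$ and $X^-=\bigcup_k X_{I_kJ_k}^-$ are disjoint, in which case the point equals an explicit $\{0,1\}$-combination of the weights and hence lies in $\calW'$.

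A second, related issue is that the inequalities defining $\calC(G)$ in~\eqref{conic_stab} are \emph{not} indexed by the facets of the cone $C_G$ as you suggest; they are indexed by pairs of multisets $I,J\subset\{0,1,\dots,n\}$ with $|I|=|J|$ and $I\cap J=\emptyset$, and involve the clique-incidence data through the sets $X_{IJ}^{\pm}$ of~\eqref{XIJ}. These inequalities arise because one needs to bound $\sum_{i\in I}w^{(i)}-\sum_{j\in J}w^{(j)}$ for $w\in\calW'$, which after the computation~\eqref{supp_stab} produces the stated bounds. So the ``dictionary'' you allude to is more elaborate than vertex-in-clique incidence, and the proof is not an immediate consequence of Lemma~\ref{conic} once the setup is in place.
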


The second main result is to determine when Hibi rings and stable set rings of perfect graphs are quasi-symmetric or weakly-symmetric:

\begin{thm}\label{main2_hibi}
%Let $P$ be a pure poset, i.e., $P$ is a poset whose Hibi ring is Gorenstein. Then, the following are equivalent:
Let $\kk[P]$ be the Hibi ring of a poset $P$. We consider the following conditions:
\begin{itemize}
\item[(i)] $P$ is a poset whose Hasse diagram is depicted in Figure~\ref{poset};
\item[(ii)] $\kk[P]$ is isomorphic to the tensor product of a polynomial ring and some Segre products of two polynomial rings;
\item[(iii)] $\kk[P]$ is weakly-symmetric;
\item[(iv)] $\kk[P]$ is quasi-symmetric.
\end{itemize}
Then, (i), (ii) and (iii) are equivalent. Furthermore, if $\kk[P]$ is Gorenstein, then the above four conditions are equivalent.
\end{thm}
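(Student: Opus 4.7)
The plan is to prove the chain of implications $(\mathrm{i})\Leftrightarrow(\mathrm{ii})\Rightarrow(\mathrm{iii})\Rightarrow(\mathrm{i})$, and to address $(\mathrm{iv})$ separately under the Gorenstein hypothesis.

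The equivalence $(\mathrm{i})\Leftrightarrow(\mathrm{ii})$ is purely combinatorial. A Hasse diagram of the shape in Figure~\ref{poset} should be a vertical concatenation (ordinal sum through cut vertices of $\widehat P$) of ``blocks'' of two types: a single edge (chain) or a pair of parallel edges between two cut vertices. I would recall the standard fact that if $\widehat P$ has a cut vertex, then $\kk[P]$ decomposes as a fibre product / tensor product of the Hibi rings of the two parts along the common rank-$1$ factor. A single-edge block contributes a polynomial generator, whereas a pair of parallel chains of lengths $m$ and $n$ between two cut vertices produces a Hibi ring isomorphic to the Segre product of $\kk[x_1,\dots,x_m]$ and $\kk[y_1,\dots,y_n]$. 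Conversely, any tensor factorisation of $\kk[P]$ into polynomial rings and Segre products of two polynomial rings forces $\widehat P$ to have precisely this series block structure.

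For $(\mathrm{ii})\Rightarrow(\mathrm{iii})$, I would invoke the $T$-representation $W$ with $\kk[P]\cong(\Sym W)^T$ coming from the description of toric rings as invariant rings, and observe that weak-symmetry is closed under direct sums. A polynomial factor contributes weight $0$, while a Segre product of two polynomial rings in $m$ and $n$ variables contributes a weight configuration consisting of $m$ copies of $+1$ and $n$ copies of $-1$ in a $\ZZ$-direct summand of the class group, which is weakly-symmetric by Definition~\ref{def_qua}.

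The main obstacle is $(\mathrm{iii})\Rightarrow(\mathrm{i})$. The weights of $W$ are indexed by the edges of the Hasse diagram of $\widehat P$, since these edges give the facet-defining inequalities of the order polytope and hence the prime divisor classes generating the class group. I would translate weak-symmetry into the statement that for every one-parameter subgroup $\chi$ of $T$, the multiset of weights pairs under $\chi\leftrightarrow-\chi$. Using the combinatorial description of $\calC(P)$ from Theorem~\ref{conic_thm_intro}, I would then argue by contradiction: if the Hasse diagram of $\widehat P$ contains a ``branching'' configuration that is not a parallel pair of chains between two cut vertices (for example, three chains meeting at a vertex, or a non-trivial sub-lattice of incomparable elements), then one can exhibit a direction on the weight lattice along which the positive and negative weights fail to match. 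This forces the Hasse diagram to consist exclusively of chain blocks and parallel-chain blocks glued in series, which is exactly Figure~\ref{poset}.

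Finally, for the Gorenstein case, $(\mathrm{iv})\Rightarrow(\mathrm{iii})$ is immediate from the definitions. For $(\mathrm{iii})\Rightarrow(\mathrm{iv})$ under Gorenstein, I would use the equivalence with $(\mathrm{i})$ and $(\mathrm{ii})$ together with Hibi's criterion that $\kk[P]$ is Gorenstein iff $P$ is pure. Purity forces every pair-of-parallel-chains block in the decomposition to have $m=n$, so each Segre factor becomes $\kk[x_1,\dots,x_m]\mathbin{\#}\kk[y_1,\dots,y_m]$; its weight configuration ($m$ copies of $\pm 1$) is quasi-symmetric, and quasi-symmetry is preserved under direct sums.
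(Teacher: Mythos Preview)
Your overall cycle of implications matches the paper, and your treatment of $(\mathrm{i})\Leftrightarrow(\mathrm{ii})$ and $(\mathrm{ii})\Rightarrow(\mathrm{iii})$ is essentially the paper's argument via Proposition~\ref{p_sum} and an explicit weight computation. The Gorenstein addendum is also handled the same way.

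However, your plan for $(\mathrm{iii})\Rightarrow(\mathrm{i})$ has a genuine gap. First, your translation of weak-symmetry is too strong: it does \emph{not} say that the multiset of weights pairs under $\chi\leftrightarrow -\chi$; it only says that whenever some $\beta_i$ lies on a line $l$, the cone spanned by the $\beta_i$'s on $l$ is all of $l$ (equivalently, some weight points in the opposite direction, with no multiplicity constraint). Second, and more importantly, ``exhibit a direction on the weight lattice along which the positive and negative weights fail to match'' is not yet an argument, and invoking the polytope $\calC(P)$ from Theorem~\ref{conic_thm_intro} is not the right tool: that theorem describes conic classes, whereas what you need is the explicit formula for the weights $\beta_e$ from Proposition~\ref{weight_hibi}.

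The paper's actual mechanism is structural. After reducing to the case where $\calH(\widehat P)$ is $2$-connected (as you suggest, via cut vertices and Proposition~\ref{p_sum}), it uses the \emph{ear decomposition}: any $2$-connected graph is built from a cycle $H$ by successively attaching chains. Choosing a spanning tree $T$ by deleting one edge from $H$ and one from each added chain gives fundamental cycles $F_1=H,F_2,\dots$. The point is that if a second ear exists, then every edge in $\supp^-(F_2)$ already lies in $E(F_1)$; by Proposition~\ref{weight_hibi} this forces $\beta_e^{(1)}\neq 0$ for every such $e$, so no weight equals $-\eb_2$. But $\eb_2=\beta_{e_2}$ is a weight, and since all weights lie in $\{0,\pm 1\}^{n-d}$, weak-symmetry would require $-\eb_2$ to appear --- contradiction. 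Hence $\calH(\widehat P)=H$, a single cycle, which is exactly the $2$-connected block in Figure~\ref{poset}. Your proposal does not isolate this ear-decomposition idea or the specific obstruction edge set $\supp^-(F_2)\subset E(F_1)$, and without it the contradiction does not materialise.
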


%%%%%%%%%%%%%%%%%%%%%
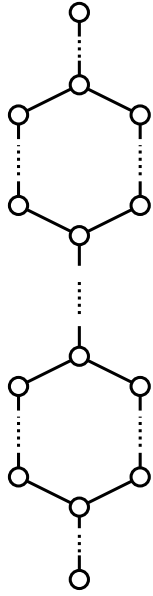
\begin{figure}[h]
\centering
{\scalebox{0.8}{
\begin{tikzpicture}[line width=0.05cm]

\coordinate (L1) at (0,2); \coordinate (L2) at (0,3.5); \coordinate (L3) at (0,6.5); 
\coordinate (L4) at (0,8.0); \coordinate (L5) at (0,6); \coordinate (L6) at (0,7);
\coordinate (R1) at (2,2); \coordinate (R2) at (2,3.5); \coordinate (R3) at (2,6.5); 
\coordinate (R4) at (2,8.0); \coordinate (R5) at (2,6); \coordinate (R6) at (2,7); 
\coordinate (M1) at (1,0.3); \coordinate (M2) at (1,1.5); \coordinate (M3) at (1,4.0); 
\coordinate (M4) at (1,6.0); \coordinate (M5) at (1,8.5); \coordinate (M6) at (1,9.7);
\coordinate (M7) at (1,8.2); \coordinate (M8) at (1,9); 

%edge
\draw (M1)--(1,0.7); \draw[dotted] (1,0.75)--(1,1.1); \draw (1,1.15)--(M2);
\draw (M3)--(1,4.5); \draw[dotted] (1,4.7)--(1,5.3); \draw (1,5.5)--(M4);
\draw (L1)--(0,2.4); \draw[dotted] (0,2.5)--(0,3.0); \draw (0,3.1)--(L2);
\draw (R1)--(2,2.4); \draw[dotted] (2,2.5)--(2,3.0); \draw (2,3.1)--(R2);
\draw (L3)--(0,6.9); \draw[dotted] (0,7.0)--(0,7.5); \draw (0,7.6)--(L4);
\draw (R3)--(2,6.9); \draw[dotted] (2,7.0)--(2,7.5); \draw (2,7.6)--(R4);
\draw (M5)--(1,8.9); \draw[dotted] (1,8.95)--(1,9.25); \draw (1,9.3)--(M6);
\draw (M2)--(L1); \draw (M2)--(R1); \draw (M3)--(L2); \draw (M3)--(R2);
\draw (M4)--(L3); \draw (M4)--(R3); \draw (M5)--(L4); \draw (M5)--(R4);

%node
\draw [line width=0.05cm, fill=white] (L1) circle [radius=0.15];% node[below right] {\Large $p_1$};
\draw [line width=0.05cm, fill=white] (L2) circle [radius=0.15];
\draw [line width=0.05cm, fill=white] (L3) circle [radius=0.15];% node[] at (0.8,3) {\Large $p_{t+s_1}$};
\draw [line width=0.05cm, fill=white] (L4) circle [radius=0.15];
\draw [line width=0.05cm, fill=white] (R1) circle [radius=0.15]; %node[below right] {\Large $p_{t+s_1+s_2+3}$}; 
\draw [line width=0.05cm, fill=white] (R2) circle [radius=0.15]; %node[below right] {\Large $p_{t+s_1+s_2+s_3-2}$}; 
\draw [line width=0.05cm, fill=white] (R3) circle [radius=0.15]; %node[below right] {\Large $p_{t+s_1+s_2+s_3-1}$}; 
\draw [line width=0.05cm, fill=white] (R4) circle [radius=0.15];% node[below right] {\Large $p_{t+s_1+s_2+s_3}$};
\draw [line width=0.05cm, fill=white] (M1) circle [radius=0.15]; %node[] at (1,-0.3) {\Large $\hat{0}$};
\draw [line width=0.05cm, fill=white] (M2) circle [radius=0.15];
\draw [line width=0.05cm, fill=white] (M3) circle [radius=0.15];% node[below right] {\Large $p_{t+s_1+s_2}$};
\draw [line width=0.05cm, fill=white] (M4) circle [radius=0.15];% node[below right] {\Large $p_{t+s_1+s_2+1}$}; 
\draw [line width=0.05cm, fill=white] (M5) circle [radius=0.15]; %node[below right] {\Large $p_{t+s_1+s_2+3}$}; 
\draw [line width=0.05cm, fill=white] (M6) circle [radius=0.15]; %node[] at (1,10.3)  {\Large $\hat{1}$}; 

\end{tikzpicture} 
}}
\caption{The general X-shape poset}
\label{poset}

\end{figure}
%%%%%%%%%%%%%%%%%%%%%

\noindent Here, we call the poset in Figure~\ref{poset} \textit{general X-shape} in this paper, and define it in Section~\ref{sec_conic_stab} more precisely.

\begin{thm}\label{main2_stab}
%Let $G$ be a perfect graph with maximal cliques $Q_0,Q_1,\ldots,Q_n$ satisfied $|Q_0|=|Q_1|=\cdots=|Q_n|$, i.e., $G$ is a perfect graph whose stable set ring is Gorenstein. Then, the following are equivalent:
Let $\kk[\Stab_G]$ be the stable set ring of a perfect graph $G$. %with maximal cliques $Q_0,Q_1,\ldots,Q_n$. 
We consider the following conditions:
\begin{itemize}
\item[(i)] $G$ has at most $2$ maximal cliques;
\item[(ii)] $\kk[\Stab_G]$ is isomorphic to the tensor product of a polynomial ring and the Segre products of two polynomial rings;
\item[(iii)] $\kk[\Stab_G]$ is weakly-symmetric;
\item[(iv)] $\kk[\Stab_G]$ is quasi-symmetric.
\end{itemize}
Then, (i), (ii) and (iii) are equivalent. Furthermore, if $\kk[\Stab_G]$ is Gorenstein, then the above four conditions are equivalent.
\end{thm}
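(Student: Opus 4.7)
The plan is to follow the same overall strategy as the proof of Theorem~\ref{main2_hibi} for Hibi rings and establish the circle $(i)\Rightarrow(ii)\Rightarrow(iii)\Rightarrow(i)$, plus $(iii)\Leftrightarrow(iv)$ in the Gorenstein case. The key tool is Theorem~\ref{conic_stab_intro} together with the presentation $\kk[\Stab_G] = \Sym(W)^T$, which, via the facet structure of the stable set polytope, gives an explicit description of the weights of $W$ in $\Cl(\kk[\Stab_G]) \otimes \RR$.

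For $(i)\Rightarrow(ii)$, I will argue by cases on the number of maximal cliques. If there is only one, then $G$ is complete and $\kk[\Stab_G]$ is a polynomial ring. If there are two maximal cliques $K_1, K_2$, partition $V(G) = A \sqcup C \sqcup B$ with $A = K_1 \setminus K_2$, $C = K_1 \cap K_2$ and $B = K_2 \setminus K_1$; the stable sets of $G$ are exactly the empty set, the singletons, and the pairs $\{a,b\}$ with $a \in A, b \in B$. Matching these generators against those of a suitable Segre product yields the explicit isomorphism
\[
\kk[\Stab_G] \;\cong\; \kk[z_c : c \in C] \,\otimes\, \bigl(\kk[u_\ast, \{u_a\}_{a\in A}] \,\#\, \kk[v_\ast, \{v_b\}_{b\in B}]\bigr),
\]
which is (ii). The direction $(ii)\Rightarrow(iii)$ is then an immediate weight computation: in the presentation coming from (ii), the polynomial tensor factor contributes only the zero weight, and each Segre factor contributes the two weights $+1$ and $-1$ on its own coordinate axis of $\Cl(\kk[\Stab_G]) \otimes \RR$, so the set of weights is closed under $\chi \mapsto -\chi$, i.e., $W$ is weakly-symmetric.

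The implication $(iii)\Rightarrow(i)$ is the main obstacle and will be argued by contrapositive. Assume $G$ has at least three maximal cliques, so that $\rank \Cl(\kk[\Stab_G]) = n \geq 2$. Using the facet-based identification of weights with vertex non-negativity inequalities and maximal-clique inequalities provided by the proof of Theorem~\ref{conic_stab_intro}, I will exhibit a weight $\chi \in \Cl(\kk[\Stab_G]) \otimes \RR$ coming from a maximal-clique inequality whose negative $-\chi$ is realized by no other facet. Concretely, after choosing a linear functional on $\RR^n$ that separates the images of the clique-inequality rays from the images of the non-negativity rays, one sees that the symmetry required by the weakly-symmetric condition cannot hold once $n \geq 2$. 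The delicate point is setting up this identification so that the separating functional exists and produces an unambiguous obstruction regardless of the detailed combinatorics of $G$; this is where Theorem~\ref{conic_stab_intro} and the description of the region $\calC(G)$ do the heavy lifting.

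Finally, in the Gorenstein case the equivalence $(iii)\Leftrightarrow(iv)$ reduces to $(iii)\Rightarrow(iv)$, since the reverse is immediate from the definitions. Having already established $(iii)\Leftrightarrow(i)$, one has the structural decomposition in (ii); the Gorenstein condition on $\kk[\Stab_G]$ then translates to the Segre factor being a balanced Segre product (that is, $|A|=|B|$), which in turn upgrades the set-theoretic symmetry of the weights to the multiplicity-preserving symmetry required for quasi-symmetric.
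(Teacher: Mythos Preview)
Your arguments for $(i)\Rightarrow(ii)$, $(ii)\Rightarrow(iii)$, and the Gorenstein upgrade $(iii)\Rightarrow(iv)$ are along the same lines as the paper's and are fine.

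The gap is in $(iii)\Rightarrow(i)$. Your plan is to ``choose a linear functional on $\RR^n$ that separates the images of the clique-inequality rays from the images of the non-negativity rays,'' but no such functional exists: the clique weights are precisely $\eb_0,\eb_1,\ldots,\eb_n$ with $\eb_0=-\eb_1-\cdots-\eb_n$, so they sum to zero and cannot all lie on one side of a hyperplane. More generally, the separating-functional idea does not isolate a weight whose negative is missing, and invoking Theorem~\ref{conic_stab_intro} or the region $\calC(G)$ is a red herring here---the paper's proof of this implication uses neither.

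What actually works is a short combinatorial argument based directly on the explicit weight formula~(\ref{weight_stab}). First observe that every weight lies in $\{0,\pm1\}^n$: the clique weights are $\eb_0,\ldots,\eb_n$, and for a vertex $k$ the weight $\beta_{n+k}=\sum_{j}\chi_j(k)\eb_j$ lands in $\{0,1\}^n$ if $k\notin Q_0$ and in $\{0,-1\}^n$ if $k\in Q_0$. Since $\eb_j$ is a weight for each $j\in\{0,1,\ldots,n\}$, weak symmetry forces some weight on the ray $\RR_{<0}\eb_j$; but the only element of $\{0,\pm1\}^n$ on that ray is $-\eb_j$, so $-\eb_j$ itself must occur. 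Unpacking $-\eb_j=\beta_{n+v_j}$ shows there is a vertex $v_j$ lying in every maximal clique except $Q_j$. If $n\geq 2$, any two of the $v_j$'s share a common maximal clique and are therefore adjacent, so $\{v_0,\ldots,v_n\}$ is a clique; it is contained in no $Q_j$ (since $v_j\notin Q_j$), contradicting that $Q_0,\ldots,Q_n$ exhaust the maximal cliques. Hence $n\leq 1$, which is~(i).
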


Finally, we apply Theorem~\ref{conic_stab_intro} to construct NCCRs for a special family of stable set rings of perfect graphs.
We introduce a perfect graph $G_{r_1,\ldots,r_n}$ for $n\ge 3$ and $r_1,\ldots,r_n \in \ZZ_{>0}$ (see Section~\ref{family}),
and show that the stable set polytope $\Stab_{G_{r_1,\ldots,r_n}}$ does not coincide with any chain polytope (see Proposition~\ref{G} (iii)). 

\begin{thm}\label{main3}
Let 
$$\calL=\{(z_1,\cdots,z_{n}) \in \ZZ^n : 0\le z_i \leq r_i \text{ for $i\in [n]$} \}\subset \calC(G_{r_1,\ldots,r_n})\cap\ZZ^n.$$ 
Then, $E=\End_R(M_{\calL})$ is an NCCR of $R=\kk[\Stab_{G_{r_1,\ldots,r_n}}]$.
In particular, $\kk[\Stab_{G_{r_1,\ldots,r_n}}]$ has a toric NCCR.
\end{thm}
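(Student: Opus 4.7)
My plan is to verify the two standard conditions defining an NCCR for $E = \End_R(M_\calL)$: namely, that $E$ is a maximal Cohen--Macaulay $R$-module, and that $E$ has finite global dimension.

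For the MCM condition, I would first invoke Theorem~\ref{conic_stab_intro}: since $\calL \subseteq \calC(G_{r_1,\ldots,r_n}) \cap \ZZ^n$, each $z \in \calL$ corresponds to a conic divisorial ideal $I_z$, which is automatically MCM. Thus $M_\calL = \bigoplus_{z \in \calL} I_z$ is MCM. To show $E$ is MCM, write $E = \bigoplus_{y,z \in \calL} \Hom_R(I_y, I_z)$; each summand is a divisorial ideal whose class in $\Cl(R) \cong \ZZ^n$ is represented by the difference $z - y$. Hence it suffices to prove that every such difference is again a conic class, i.e.\ lies in $\calC(G_{r_1,\ldots,r_n}) \cap \ZZ^n$. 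Since $\calL = \prod_i [0, r_i] \cap \ZZ^n$, the difference set is the symmetric box $\prod_i [-r_i, r_i] \cap \ZZ^n$, and I would verify containment in $\calC(G_{r_1,\ldots,r_n})$ by directly checking the explicit facet inequalities from Theorem~\ref{conic_stab_intro}.

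For finite global dimension, the product structure of $\calL$ suggests modeling $E$ as a Koszul-type algebra on a quiver whose vertices are indexed by the lattice points of $\calL$ and whose arrows correspond to unit moves along the coordinate axes of the box, with relations enforcing the commutation of these moves. Concretely, I would identify the minimal generators of each $\Hom_R(I_y, I_z)$ with monotone lattice paths from $y$ to $z$ inside $\calL$, and then construct explicit projective resolutions of the simple $E$-modules by induction on the position of a vertex within the box. If each simple admits a resolution of length at most $n$, this forces $\gldim E < \infty$, which together with the MCM condition yields that $E$ is an NCCR; that $E$ is \emph{toric} is automatic since $M_\calL$ is a sum of divisorial ideals.

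The main obstacle will be the global-dimension step. Because $\Stab_{G_{r_1,\ldots,r_n}}$ is not a chain polytope (Proposition~\ref{G}~(iii)), $R$ is not a tensor product of Segre products of polynomial rings, so the Koszul resolutions used in the Segre case \cite{HN} cannot be transported verbatim. I expect the key technical effort to lie in pinning down the $R$-module generators of $\Hom_R(I_y, I_z)$ for the specific cone $\Stab_{G_{r_1,\ldots,r_n}}$, showing that they are exactly the edges of the $n$-dimensional box quiver on $\calL$, and then verifying the box-commutation relations suffice to present $E$ — at which point the desired Koszul-type resolution of the simples can be written down explicitly.
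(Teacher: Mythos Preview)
Your treatment of the MCM condition is correct and coincides with the paper's: by Proposition~\ref{G}(v) the conic region $\calC(G_{r_1,\ldots,r_n})$ is exactly the symmetric box $\prod_i[-r_i,r_i]$, so the difference set $\calL-\calL$ equals $\calC(G_{r_1,\ldots,r_n})\cap\ZZ^n$ on the nose and every $\Hom$-summand is conic, hence MCM.

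Your approach to finite global dimension, however, diverges from the paper's. The paper does not attempt to present $E$ as a quiver with relations or to resolve the simple $E$-modules directly. Instead it works upstairs in the $G$-equivariant category $\mod(G,S)$ via the \v{S}penko--Van den Bergh machinery: setting $\widetilde{\calL}=\calC(G_{r_1,\ldots,r_n})\cap\ZZ^n$, one shows $\pdim_{\Lambda_\calL}P_{\calL,\chi}<\infty$ for every $\chi\in\widetilde{\calL}$ by repeatedly applying the separation lemma (Lemma~\ref{lem:finite2}(ii)) with the one-parameter subgroups $\lambda=\eb_j$, peeling off one negative coordinate at a time by an induction on nested boxes $\widetilde{\calL}_j(k)$. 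This suffices by Lemma~\ref{lem_NCCR}, since $\widetilde{\calL}$ contains all conic classes and finiteness of $\gldim\Lambda_\calL$ descends to $\gldim E$.

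The paper's route yields the result without ever identifying the quiver of $E$ or its relations. Your route, if completed, would give a more explicit picture of $E$, but the assertion that the arrows are exactly the unit coordinate moves and that box-commutation relations present $E$ is unproven and not obviously true: besides the weights $\pm\eb_j$ (each with multiplicity $>1$), the ring carries the weight $\eb_0=-\sum_j\eb_j$, which in principle contributes diagonal maps and additional relations among paths. Even if the diagonal arrows factor through coordinate arrows inside $\calL$, verifying that the relation ideal is generated by the commutations you propose is a genuine computation you have not sketched, and it is precisely this step that the equivariant separation argument is designed to bypass.
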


\medskip

\subsection{Organization}
In Section~\ref{sec_pre}, we recall some fundamental materials, e.g., toric rings, conic divisorial ideals, Hibi rings and stable set rings. 
We also prepare some notions and notation from graph theory in order to state our main results.
In Section~\ref{sec_conic}, we provide an idea to determine the conic divisorial ideals of toric rings and give a description of the conic divisorial ideals of Hibi rings and stable set rings of perfect graphs, that is, we prove Theorem~\ref{conic_thm_intro} and Theorem~\ref{conic_stab_intro}.
In Section~\ref{sec_qw}, we prove Theorem~\ref{main2_hibi} and Theorem~\ref{main2_stab}. 
In Section~\ref{sec_nccr}, we introduce a perfect graph $G_{r_1,\ldots,r_n}$ and discuss its properties.
We also recall the notion of NCCRs, prepare some lemmas and prove Theorem~\ref{main3}.  

\medskip

%%%%%%%%%%%%%%%%%%%%%%%%%%%%%%%%%%%%%%%%%%%%%%%%%%%%%%%%%%%%%%%%%%%%%%%%%%
\subsection*{Acknowledgment} 
The author would like to thank Akihiro Higashitani for a lot of his helpful comments and instructive discussions. 
The author was partially supported by Grant-in-Aid for JSPS Fellows Grant JP22J20033. 
%%%%%%%%%%%%%%%%%%%%%%%%%%%%%%%%%%%%%%%%%%%%%%%%%%%%%%%%%%%%%%%%%%%%%%%%%%

\bigskip

%%%%%%%%%%%%%%%%%%%%%%%%%%%%%%%%%%%%%%%%%%%%%%%%%%%%%%%%%%%%%%%%%%%%%%%%%%%%%%%%%%%%%%%%%%%%%%%%%%%%%%%%
%%%%%%%%%%%%%%%%%%%%%%%%%%%%%%%%%%%%%%%%%%%%%%%%%%%%%%%%%%%%%%%%%%%%%%%%%%%%%%%%%%%%%%%%%%%%%%%%%%%%%%%%%%
%%%%%%%%%%%%%%%%%%%%%%%%%%%%%%%%%%%%%%%%%%%%%%%%%%%%%%%%%%%%%%%%%%%%%%%%%%%%%%%%%%%%%%%%%%%%%%%%%%%%%%%%%%%5
\section{Preliminaries}\label{sec_pre}
The goal of this section is to prepare the required materials for the discussions of our main results.

%\medskip

%%%%%%%%%%%%%%%%%%%%%%%%%%%%%%%%%%%%%%%%%%%%%%%%%%%%%%%%%%%%%%%%%%%%%%%%%%%%%%%%%%%%%%%%%%%%%%%%%%%%%%%%%%%5
\subsection{Preliminaries on toric rings and conic divisorial ideals}\label{pre_toric}
In this paper, we mainly study two classes of toric rings, Hibi rings and stable set rings. Thus, we start this paper with introducing toric rings. 

%Let $\sfN\cong\ZZ^d$ be a lattice of rank $d$ and let $\sfM\coloneqq\Hom_\ZZ(\sfN, \ZZ)$ be the dual lattice of $\sfN$. 
%We set $\sfN_\RR\coloneqq\sfN\otimes_\ZZ\RR$ and $\sfM_\RR\coloneqq\sfM\otimes_\ZZ\RR$ and 
Let $\langle -, -\rangle$ denote the natural inner product of $\RR^d$ and let $[n]=\{1,\ldots,n\}$ for $n\in \ZZ_{>0}$. 
We consider a $d$-dimensional strongly convex rational polyhedral cone 
$$
\tau=\mathrm{Cone}(v_1, \cdots, v_n)=\RR_{\ge 0}v_1+\cdots +\RR_{\ge 0}v_n 
$$
generated by $v_1, \cdots, v_n\in\ZZ^d$, where $d\le n$. 
We assume this system of generators is minimal and the generators are primitive, i.e., $\epsilon v_i \notin \ZZ^d$ for any $0<\epsilon <1$. 
For each generator, we define a linear form $\sigma_i(-)=\langle-, v_i\rangle$ and denote $\sigma(-)=(\sigma_1(-),\cdots,\sigma_n(-))$. 
We also consider the dual cone $\tau^\vee$ of $\tau$: 
$$
\tau^\vee=\{{\bf x}\in\RR^d : \sigma_i({\bf x})\ge0 \text{ for all } i\in [n] \}. 
$$
%Then, $\tau^\vee\cap\ZZ^d$ is a positive normal affine monoid, and hence 
We now define the toric ring 
\begin{align}\label{R}
R=\kk[\tau^\vee\cap\ZZ^d]=\kk[t_1^{\alpha_1}\cdots t_d^{\alpha_d} : (\alpha_1, \cdots, \alpha_d)\in\tau^\vee\cap\ZZ^d]. 
\end{align}
%where $\kk$ is an algebraically closed field. 
Note that $R$ is a $d$-dimensional Cohen-Macaulay normal domain. 
In addition, for each $\bfa=(a_1, \cdots, a_n)\in\RR^n$, we set 
$$
\TT(\bfa)=\{{\bf x}\in\ZZ^d : \sigma_i({\bf x})\ge a_i \text{ for all } i\in [n] \}. 
$$
Then, we define the module $T(\bfa)$ generated by all monomials whose exponent vector is in $\mathbb{T}(\bfa)$. 
By the definition, we have $\mathbb{T}(0)=\tau^\vee\cap\ZZ^d$ and $T(0)=R$.
Moreover, we note some facts associated with the module $T(\bfa)$ (see, e.g., \cite[Section 4.F]{BG2}): 
\begin{itemize}
\item Since $\sigma_i(\bfx)\in \ZZ$ for any $i\in [n]$ and any $\bfx \in \ZZ^d$, we can see that $T(\bfa)=T(\ulcorner \bfa\urcorner)$, where $\ulcorner \; \urcorner$ means the round up 
and $\ulcorner \bfa\urcorner=(\ulcorner a_1\urcorner, \cdots, \ulcorner a_n\urcorner)$. 
\item The module $T(\bfa)$ is a divisorial ideal and any divisorial ideal of $R$ takes this form. %(see, e.g., \cite[Theorem~4.54]{BG2}). 
Therefore, we can identify each $\bfa\in\ZZ^n$ with the divisorial ideal $T(\bfa)$.
\item It is known that the isomorphic classes of divisorial ideals of $R$ one-to-one correspond to the elements of the divisor class group $\Cl(R)$ of $R$. 
We see that for $\bfa, \bfa^\prime\in\ZZ^n$, $T(\bfa)\cong T(\bfa^\prime)$ if and only if there exists ${\bf y}\in \ZZ^d$ such that $a_i=a_i^\prime+\sigma_i({\bf y})$ for all $i\in [n]$. 
Thus, we have $\Cl(R)\cong\ZZ^n/\sigma(\ZZ^d)$.
%(see, e.g., \cite[Corollary~4.56]{BG2}).
\end{itemize} 
Let $\fkp_i= T(\eb_i)$, where $\eb_i$ is the $i$-th basic vector in $\ZZ^n$, %where $\delta_{ij}$ is the Kronecker delta, 
and consider the prime divisor $\calD_i= \rmV(\fkp_i)=\Spec R/\fkp_i$ on $\Spec R$. 
Then, the divisorial ideal $T(\bfa)=T(a_1,\cdots,a_n)$ corresponds to the Weil divisor $-(a_1\calD_1+\cdots +a_n\calD_n)$. 
By the fact that $\Cl(R) \cong \ZZ^n/\sigma(\ZZ^d)$, we obtain that 
\begin{equation}\label{relation_divisor}
\sigma_1(\eb_j) \calD_1+\cdots+\sigma_n(\eb_j) \calD_n=v_1^{(j)}\calD_1+\cdots+v_n^{(j)}\calD_n=0
\end{equation}
in $\Cl(R)$ for all $j\in [d]$, where for a vector $v\in\RR^d$, $v^{(j)}$ denotes the $j$-th coordinate of $v$.

\medskip

We are interested in a divisorial ideal called \textit{conic}. 
\begin{definition}[{see, e.g., \cite[Section~3]{BG1}}]
\label{def_conic}
We say that a divisorial ideal $T(\bfa)$ is \emph{conic} if there exists $\bfx\in\RR^d$ such that $\bfa=\ulcorner\sigma(\bfx)\urcorner$. 
\end{definition}
%\noindent By the observation mentioned above, we see that all isomorphic classes of conic divisorial ideals are contained in $\ulcorner\sigma((-1,0]^d)\urcorner / \sigma(\ZZ^d)$.

\medskip

If $\Cl(R)$ is torsionfree, that is, $\Cl(R)\cong \ZZ^r$ for some $r\in \ZZ_{\ge 0}$, then we can rewrite $R$ as the ring of invariants under the action of $G= \Hom(\Cl(R), \kk^{\times})\cong (\kk^{\times})^r$ on $S= \kk[x_1,\ldots,x_n]$ as follows (we use the terminology and notation as in \cite[Section~3]{HN}):

Let $X(G)$ be the character group of $G$. 
We can see that $X(G)\cong \Cl(R)$, and hence we can use the same symbol for both of a character and the corresponding weight. 
When we consider the prime divisor $\calD_i$ on $\Spec R$ as the element in $X(G)\cong \ZZ^r$ via the surjection in (\ref{relation_divisor}), we denote it by $\beta_i$. 

For a character $\chi \in X(G)$, we also denote by $V_\chi$ the irreducible
representation corresponding to $\chi$, and we let $W=\bigoplus_{i=1}^n V_{\beta_i}$. 
Then, the symmetric algebra $\Sym W$ of the $G$-representation $W$ is isomorphic to $S$, and the algebraic torus $G$ acts on $S$, which is the action induced by $g\cdot x_i=\beta_i(g)x_i$ for $g\in G$. %acts on $x_i$ as
This action gives the $\Cl(R)$-grading on $S$, and the degree zero part coincides with the $G$-invariant components.
In particular, we have $R=S^G$ (see, e.g., \cite[Theorem 2.1]{BG1}). 

For a character $\chi$, we define $M_{\chi}= (S\otimes_{\kk} V_{\chi})^G$.
This is an $R$-module called the \textit{module of covariants} associated to $V_\chi$ and is generated by $f\in S$ with $g\cdot f=\chi(g)f$ for any $g\in G$. 
Note that for $\chi=\sum_i a_i\beta_i \in X(G)$, we have $T(a_1,\ldots,a_n)=M_{-\chi}$.

Moreover, by the arguments in \cite[Section 10.6]{SpVdB}, we can describe the conic divisorial ideals of $R$ by using the weights $\beta_1,\ldots,\beta_n$ as follows: 
\begin{prop}\label{strongly}
Let $\chi\in X(G)$. Then, $M_{-\chi}$ is conic if and only if %$\chi$ is a strongly critical character with respect to $\beta_1,\ldots,\beta_n$, that is, 
one can write $-\chi=\sum_i a_i\beta_i$ with $a_i \in [0,1)$ for all $i\in [n]$.
\end{prop}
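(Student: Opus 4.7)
The plan is to unwind the identifications set up in Section~\ref{pre_toric} and verify the equivalence directly. Recall that a divisorial ideal $T(\bfa)$ depends only on the class $[\bfa] \in \ZZ^n/\sigma(\ZZ^d) \cong \Cl(R) = X(G)$, and by construction $M_{-\chi} = T(a_1,\ldots,a_n)$ whenever $(a_1,\ldots,a_n) \in \ZZ^n$ is an integer representative of $-\chi$. By Definition~\ref{def_conic}, conicity of $T(\bfa)$ means that some such representative equals $\ulcorner \sigma(\bfx) \urcorner$ for some $\bfx \in \RR^d$, while strong criticality means that $-\chi$ admits a (real) expression $\sum_i c_i \beta_i$ with $c_i \in [0,1)$. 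Both conditions amount to statements about $-\chi$ inside $\RR^n/\sigma(\RR^d) \cong \Cl(R) \otimes_\ZZ \RR$, into which $\Cl(R) \cong \ZZ^r$ embeds since it is torsion-free; this is why the equivalence should be essentially formal.

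For the forward implication, I would assume $M_{-\chi}$ is conic and choose $\bfx \in \RR^d$ so that $\bfa := \ulcorner \sigma(\bfx) \urcorner$ is an integer representative of $-\chi$. Setting $c_i = a_i - \sigma_i(\bfx)$, the definition of the ceiling forces $c_i \in [0,1)$, and $\bfa - (c_1,\ldots,c_n) = \sigma(\bfx) \in \sigma(\RR^d)$. Passing to $\Cl(R) \otimes \RR$, the tuples $\bfa$ and $(c_1,\ldots,c_n)$ project to the same element, namely $-\chi$, so indeed $-\chi = \sum_i c_i \beta_i$ with each $c_i \in [0,1)$.

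For the converse, starting from a representation $-\chi = \sum_i c_i \beta_i$ with $c_i \in [0,1)$, I would pick any integer representative $(a_1,\ldots,a_n) \in \ZZ^n$ of $-\chi$. Then $(a_1,\ldots,a_n) - (c_1,\ldots,c_n)$ lies in $\sigma(\RR^d)$, say equals $\sigma(\bfx)$ for some $\bfx \in \RR^d$. The identities $a_i = \sigma_i(\bfx) + c_i$ with $a_i \in \ZZ$ and $c_i \in [0,1)$ force $\ulcorner \sigma_i(\bfx) \urcorner = a_i$ for every $i$, so $\bfa = \ulcorner \sigma(\bfx) \urcorner$ and $M_{-\chi} = T(\bfa)$ is conic.

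The only real subtlety is bookkeeping: one must track carefully whether a given equation of weights lives in $\Cl(R)$ itself or in the ambient real vector space $\Cl(R) \otimes \RR$. Once the torsion-freeness of $\Cl(R)$ is invoked to identify $\Cl(R)$ with its image in $\Cl(R) \otimes \RR$, the remainder is a direct translation between the two descriptions, and I do not expect any genuine obstacle.
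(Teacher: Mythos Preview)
The paper does not supply its own proof here; it simply cites \cite[Section~10.6]{SpVdB}, and your direct unwinding of the definitions is essentially that argument. The key mechanism---that the fractional parts $c_i=\ulcorner\sigma_i(\bfx)\urcorner-\sigma_i(\bfx)\in[0,1)$ furnish the required coefficients because $\sigma(\bfx)$ vanishes in $\Cl(R)\otimes_\ZZ\RR$, together with the torsion-freeness of $\Cl(R)$ to pass between integral and real statements---is correct and is the whole content of the proposition.

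One bookkeeping caution: under the paper's convention one has $T(a_1,\ldots,a_n)=M_{-\chi}$ precisely when $\chi=\sum_i a_i\beta_i$, so via the surjection $\eb_i\mapsto\beta_i$ the tuple $\bfa$ represents $\chi$, not $-\chi$ as you write. If you propagate this correction through your argument, the forward implication yields $\chi=\sum_i c_i\beta_i$ rather than $-\chi=\sum_i c_i\beta_i$. You should reconcile this carefully with the sign in the displayed formula of the statement (and with the convention in \cite{SpVdB}); either way the substance of your proof is unaffected, and the paper's later uses only require that the conic classes are parametrized by $\calW(R)\cap\ZZ^r$.
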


%\medskip

We introduce notions of quasi-symmetric and weakly-symmetric:

\begin{definition}[{\cite[Definition 2.2]{SpVdB4}}]\label{def_qua}
A $G$-representation $W$ is \textit{quasi-symmetric} if for every line $l\subset X(G)_{\RR}= X(G)\otimes_{\ZZ}\RR$ passing through the origin, we have $\sum_{\beta_i \in l}\beta_i=0$. %(see \cite[Subsection 1.6]{SpVdB}).
It is \textit{weakly-symmetric} if for every line $l$, the cone spanned by $\beta_i\in l$ is either zero or $l$. % (see \cite{SpVdB4}).
We say that a toric ring $R$ is quasi-symmetric (resp. weakly-symmetric) if $R\cong S^G$ with $S=\Sym W$ and $W$ is a quasi-symmetric (resp. weakly-symmetric) representation.
\end{definition}
\noindent Note that quasi-symmetric representations are weakly-symmetric. %i.e., ``weakly-symmetric'' is a generalization of ``quasi-symmetric''.
If $W$ is quasi-symmetric, then the top exterior $W$ is the trivial representation, and hence $R=S^G$ is Gorenstein.

\medskip

%%%%%%%%%%%%%%%%%%%%%%%%%%%%%%%%%%%%%%%%%%%%%%%%%%%%%%%%%%%%%%%%%%%%%%%%%%%%%%%%%%%%%%%%%%%%%%%%%%%%%%%%%
\subsection{Preliminaries on Hibi rings}
\label{subsec_Hibi}
In this subsection, we recall the order polytopes and Hibi rings of posets. 

Let $P=\{p_1,\cdots,p_{d-1}\}$ be a finite partially ordered set (poset, for short) equipped with a partial order $\preceq$. 
For a subset $I \subset P$, we say that $I$ is a \textit{poset ideal} of $P$ if $p \in I$ and $q \preceq p$ imply $q \in I$. 
For a subset $A \subset P$, we call $A$ an \textit{antichain} of $P$ if $p \not\preceq q$ and $q \not\preceq p$ for any $p,q \in A$ with $p \neq q$. 
Note that $\emptyset$ is regarded as a poset ideal and an antichain. 
Let 
\begin{align*}
\calO_P=\{(x_1,\cdots,x_{d-1}) \in \RR^{d-1} : x_i \geq x_j \text{ if }p_i \preceq p_j \text{ in }P, \; 0 \leq x_i \leq 1 \text{ for } i\in[d-1]\}.
\end{align*}
This convex polytope $\calO_P$ is called the \textit{order polytope} of a poset $P$. 
According to \cite{Sta2}, it is known that the elements of $\calO_P\cap \ZZ^{d-1}$ are precisely the vertices of $\calO_P$ and they are $(0,1)$-vectors, that is, $\calO_P$ is a $(0,1)$-polytope.
Furthermore, the vertices one-to-one correspond to the poset ideals of $P$.
In fact, a $(0,1)$-vector $(a_1,\ldots,a_{d-1})$ is a vertex of $\calO_P$ if and only if $\{ p_i \in P : a_i =1 \}$ is a poset ideal.

For a poset $P$, let $\kk[P]$ denote a toric ring defined by setting 
$$\kk[P]=\kk[{\bf t}^\alpha t_0 : \alpha \in \calO_P \cap \ZZ^{d-1}]=\kk\left[\; \left(\prod_{p_i\in I}t_i\right) t_0 : \text{$I$ is a poset ideal of $P$}\right],$$
where ${\bf t}^\alpha=t_1^{\alpha_1}\cdots t_{d-1}^{\alpha_{d-1}}$ for $\alpha=(\alpha_1,\cdots,\alpha_{d-1}) \in \ZZ^{d-1}$. 
This $\kk$-algebra is called the {\em Hibi ring} of $P$. 
The following properties associated with order polytopes and Hibi rings of posets $P$ are known: 
\begin{itemize}
\item $\calO_P$ has the integer decomposition property (IDP, for short), that is, for any $n\in \ZZ_{>0}$ and any $\alpha \in n\calO_P\cap \ZZ^{d-1}$, there exist $\alpha_1,\ldots,\alpha_n \in \calO_P\cap \ZZ^{d-1}$ such that $\alpha=\alpha_1+\cdots +\alpha_n$, and hence $\kk[P]$ coincides with the Ehrhart ring of $\calO_P$ (see, e.g., \cite[Section 10.4]{Villa} for Ehrhart rings).
\item $\kk[P]$ is Gorenstein if and only if $P$ is pure (\cite{Hibi}), 
where we say that $P$ is \emph{pure} if all the maximal chains $p_{i_1} \prec \cdots \prec p_{i_\ell}$ have the same length. 
\end{itemize}

\medskip

The Hibi ring of a poset can be described as the toric ring arising from a rational polyhedral cone as follows.
Let $P=\{p_1,\cdots,p_{d-1}\}$. For $p_i, p_j \in P$ with $p_j \prec p_i$, we say that $p_i$ {\em covers} $p_j$ 
if there is $p \in P$ with $p_j \preceq p \preceq p_i$ then $p=p_j$ or $p=p_i$.
Thus, the edge $\{p_i,p_j\}$ of the Hasse diagram $\calH(P)$ of $P$ if and only if $p_i$ covers $p_j$ or $p_j$ covers $p_i$.
Set $\widehat{P}=P \cup \{\hat{0}, \hat{1}\}$, 
where $\hat{0}$ (resp. $\hat{1}$) is the unique minimal (resp. maximal) element not belonging to $P$. 
Let us denote $p_0=\hat{0}$ and $p_d=\hat{1}$.
For each edge $e=\{p_i,p_j\}$ of $\calH(\widehat{P})$ with $p_i \prec p_j$, 
let $\sigma_e$ be a linear form in $\RR^d$ defined by 
\begin{align*}
\sigma_e({\bf x})=
\begin{cases}
x_i-x_j \;&\text{ if }j \not= d, \\
x_i \; &\text{ if }j=d 
\end{cases}
\end{align*}
for ${\bf x}=(x_0,x_1,\cdots,x_{d-1})$. Let $\tau_P=\mathrm{Cone}(\sigma_e : e \text{ is an edge of }\calH(\widehat{P}))\subset \RR^d$. 
Then, we can see that $\kk[P]=\kk[\tau_P^\vee \cap \ZZ^d]$.
Let $e_1,\cdots,e_n$ be all the edges of $\calH(\widehat{P})$. We set a linear form $\sigma_P:\RR^d \rightarrow \RR^n$ by 
$$\sigma_P({\bf x})=(\sigma_{e_1}({\bf x}),\cdots,\sigma_{e_n}({\bf x})) \in \RR^n$$ 
for ${\bf x} \in \RR^d$.

\medskip

Let $P$ and $Q$ be two posets with $P\cap Q=\emptyset$. 
The \textit{disjoint union} of $P$ and $Q$ is the poset $P+Q$ on $P\cup Q$ such that $x\preceq y$ in $P+Q$ if (a) $x,y\in P$ and $x\preceq y$ in $P$, or (b) $x,y\in Q$ and $x\preceq y$ in $Q$.
The \textit{ordinal sum} of $P$ and $Q$ is the poset $P\oplus Q$ on $P\cup Q$ such that $x \preceq y$ in $P\oplus Q$ if
(a) $x,y\in P$ and $x\preceq y$ in $P$, (b) $x,y\in Q$ and $x\preceq y$ in $Q$, or (c) $x\in P$ and $y\in Q$.
Moreover, let $P\oplus'Q=P\oplus \{z\}\oplus Q$, where $z$ is a new element which is not contained in $P\cup Q$.
Note that the equality $P\oplus Q=Q\oplus P$ does not hold in general while the equality $P+Q=Q+P$ holds.
By observing poset ideals of $P+Q$ and $P\oplus' Q$, the following proposition holds:

\begin{prop}\label{p_sum} Let $P$ and $Q$ be two posets with $P\cap Q=\emptyset$.
\begin{itemize}
\item[(i)] $\kk[P+Q]$ is isomorphic to the Segre product of $\kk[P]$ and $\kk[Q]$.
\item[(ii)] $\kk[P\oplus' Q]\cong \kk[Q\oplus' P] \cong \kk[P]\otimes_{\kk}\kk[Q]$. Moreover, $\kk[P\oplus' Q]$ is quasi-symmetric (resp. weakly-symmetric) if and only if $\kk[P]$ and $\kk[Q]$ are quasi-symmetric (resp. weakly-symmetric).
\end{itemize}
\end{prop}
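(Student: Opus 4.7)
The plan is to exploit combinatorial decompositions of the order polytope $\calO_{P+Q}$ and of the dual cone $\tau_{P\oplus Q}^{\vee}$, each coming from the way poset ideals of the sum decompose between the $P$- and $Q$-parts.

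For (i), I observe that $\calO_{P+Q}=\calO_P\times\calO_Q$, since the defining inequalities of $\calO_{P+Q}$ decouple between the two factors ($P$ and $Q$ are incomparable in $P+Q$). As noted in Section~\ref{subsec_Hibi}, each Hibi ring coincides with the Ehrhart ring of its order polytope by the IDP, and the Ehrhart ring of a product of lattice polytopes equals the Segre product of the factor Ehrhart rings. Hence $\kk[P+Q]\cong\kk[P]\#\kk[Q]$.

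For (ii), I work at the level of the dual cone. Using the coordinates $(x_0,\ x_p\text{ for }p\in P,\ x_z,\ x_q\text{ for }q\in Q)$ on $\RR^{|P|+|Q|+2}$, the inequalities cutting out $\tau_{P\oplus Q}^{\vee}$ split into those involving only $(x_0, x_p, x_z)$ (coming from the edges of $\calH(\widehat P)$ after identifying $\hat 1_{\widehat P}$ with $z$) and those involving only $(x_z, x_q)$ (coming from the edges of $\calH(\widehat Q)$ after identifying $\hat 0_{\widehat Q}$ with $z$). The integral change of variables $x_0':=x_0-x_z$, $x_p':=x_p-x_z$, with $x_z$ and each $x_q$ unchanged, turns the $P$-part into the defining inequalities of $\tau_P^{\vee}$ in coordinates $(x_0', x_p')$ and the $Q$-part into those of $\tau_Q^{\vee}$ in coordinates $(x_z, x_q)$ (with $x_z$ playing the role of the $\hat 0_{\widehat Q}$-coordinate). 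This yields an integral isomorphism $\tau_{P\oplus Q}^{\vee}\cap\ZZ^{|P|+|Q|+2}\cong(\tau_P^{\vee}\cap\ZZ^{|P|+1})\times(\tau_Q^{\vee}\cap\ZZ^{|Q|+1})$ and therefore $\kk[P\oplus Q]\cong\kk[P]\otimes_{\kk}\kk[Q]$; the isomorphism $\kk[P\oplus Q]\cong\kk[Q\oplus P]$ is then immediate from commutativity of the tensor product.

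For the quasi-symmetric and weakly-symmetric assertions, the same change of variables matches $\sigma_{P\oplus Q}$ edge-by-edge with $\sigma_P$ on $\widehat P$-edges and with $\sigma_Q$ on $\widehat Q$-edges, inducing a splitting $\Cl(\kk[P\oplus Q])\cong\Cl(\kk[P])\oplus\Cl(\kk[Q])$ under which each weight $\beta_e$ lies in the first or second summand according to the origin of $e$. Any line $l\subset\Cl(\kk[P\oplus Q])_{\RR}$ through the origin is contained in $\Cl(\kk[P])_{\RR}$, in $\Cl(\kk[Q])_{\RR}$, or meets both summands only at $0$; in every case the weights on $l$ come from at most one factor, so both the quasi-symmetric sum condition and the weakly-symmetric cone condition on $l$ reduce to the corresponding condition for $\kk[P]$ or $\kk[Q]$. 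The main care required is in executing the coordinate change on $\tau_{P\oplus Q}^{\vee}$ and in matching the edges of $\widehat{P\oplus Q}$ with those of $\widehat P$ and $\widehat Q$ under the identifications at $z$; once that bookkeeping is handled, the rest of (ii) follows mechanically.
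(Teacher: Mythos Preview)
Your argument is correct. The paper itself does not supply a proof of this proposition beyond the single sentence ``By observing poset ideals of $P+Q$ and $P\oplus Q$, the following proposition holds,'' so your write-up is strictly more detailed than what appears there.

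For (i), your reasoning via $\calO_{P+Q}=\calO_P\times\calO_Q$ together with the IDP is exactly a precise version of the paper's hint: poset ideals of $P+Q$ are pairs of poset ideals, which is the same combinatorial content as the product decomposition of the order polytope. For (ii), the paper's hint would have you note that a poset ideal of $P\oplus Q$ is either a poset ideal of $P$, or all of $P\cup\{z\}$ together with a poset ideal of $Q$; your cone-level coordinate change $x_0'=x_0-x_z$, $x_p'=x_p-x_z$ is simply the dual formulation of that observation and makes the tensor decomposition transparent. The weight-splitting argument for the quasi-symmetric and weakly-symmetric statements is correct and is not spelled out in the paper at all; the point that a line not contained in either summand carries only zero weights (so both conditions hold vacuously on such lines) is the one nontrivial step, and you have it.
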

\noindent Here, the \textit{Segre product} of two standard algebras $R=\bigoplus_{n\ge 0}R_n$ and $S=\bigoplus_{n\ge 0}S_n$ over $\kk$ is the graded subalgebra $\bigoplus_{n\ge 0}(R_n\otimes_{\kk}S_n)$ of $R\otimes_{\kk}S$.

\medskip

We also recall another polytope arising from $P$, which is defined as follows: 
\begin{align*}
\calC_P=\{(x_1,\ldots,x_d) \in \RR^d : \;&x_i \geq 0 \text{ for }i=1,\ldots,d, \\
&x_{i_1}+\cdots+x_{i_k} \leq 1 \text{ for }p_{i_1} \prec \cdots \prec p_{i_k} \text{ in } P\}.\end{align*} 
A convex polytope $\calC_P$ is called the \textit{chain polytope} of $P$. 
Similarly to order polytopes, it is known that $\calC_P$ is a $(0,1)$-polytope and the vertices of $\calC_P$ one-to-one correspond to the antichains of $P$ (\cite{Sta2}). 
%In fact, a $(0,1)$-vector $(a_1,\ldots,a_{d-1})$ is a vertex of $\calO_\Pi$ if and only if $\{ p_i \in \Pi : a_i =1 \}$ is a poset ideal. 

\begin{thm}[{\cite[Theorem 2.1]{HL16}}]\label{X}
Let $P$ be a poset. Then $\calO_P$ and $\calC_P$ are unimodularly equivalent (i.e., there are a vector $v\in \ZZ^d$ and a unimodular transformation $f\in \GL_d(\ZZ)$ such that $\calC_P=f(\calO_P)+v$) if and only if $P$ does not contain the X-shape subposet. 
\end{thm}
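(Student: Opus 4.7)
The plan is to prove each implication separately, using complementary techniques.

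For the direction ``$P$ contains an X-shape subposet implies $\calO_P$ and $\calC_P$ are not unimodularly equivalent,'' I would use a facet-counting invariant. Recall from Section~\ref{subsec_Hibi} that the facets of $\calO_P$ correspond to the edges of the Hasse diagram of $\widehat P$, so their number equals $|\min(P)| + |\max(P)| + c(P)$, where $c(P)$ denotes the number of cover relations in $P$. On the other hand, the facets of $\calC_P$ are the coordinate hyperplanes $y_i = 0$ together with the maximal-chain hyperplanes, yielding $|P| + m(P)$ facets, where $m(P)$ counts the maximal chains of $P$. A direct computation on the X-shape poset itself gives $8$ facets for $\calO$ and $9$ for $\calC$. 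I would then show by a local-to-global argument that for any poset $P$ containing an X-shape subposet, one still has $|P| + m(P) > |\min(P)| + |\max(P)| + c(P)$. Since unimodular equivalence preserves facet counts, this yields the conclusion.

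For the reverse direction ``$P$ is X-shape-free implies $\calO_P$ and $\calC_P$ are unimodularly equivalent,'' I would construct an explicit unimodular transformation realizing the equivalence. Stanley's transfer map $\phi_P \colon \calO_P \to \calC_P$, defined by $\phi_P(x)_i = x_i - \max\{x_j : p_j \text{ covers } p_i\}$, is always a piecewise linear bijection sending the vertex associated to a poset ideal $I$ to the vertex associated to the antichain $\max(I)$. The X-shape-free hypothesis is precisely what ensures that this piecewise linear map collapses to a single linear map with integer matrix: the nonlinearity of $\phi_P$ originates in the ambiguity of choosing which cover element achieves the maximum in the defining formula, and such ambiguity is localized exactly at X-shape configurations. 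I would make this rigorous by induction on $|P|$, peeling off an extremal element whose local structure is serial, so that Proposition~\ref{p_sum} and the inductive hypothesis can be combined to produce the required unimodular map.

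The main obstacle will be the forward implication: one must ensure that the facet-count discrepancy contributed by an X-shape subposet cannot be cancelled by additional cover relations or extremal elements elsewhere in $P$. A clean way to package this is to prove the additivity statement $|P| + m(P) - |\min(P)| - |\max(P)| - c(P) \geq 0$ with equality if and only if $P$ is X-shape-free, perhaps by analyzing, for each element $p \in P$, the local contribution of the up-set of $p$ to each term. Establishing such a combinatorial identity, likely by induction on $|P|$ combined with an analysis of the join of two incomparable covers, is where the real work lies.
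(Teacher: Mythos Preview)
First, note that the present paper does not give a proof of Theorem~\ref{X}; it is quoted verbatim from \cite{HL16} and used as a black box. There is therefore no ``paper's own proof'' to compare against, and your proposal should be measured against the original Hibi--Li argument instead.

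Your forward implication is essentially the Hibi--Li approach: they prove the inequality $|\{\text{facets of }\calC_P\}| \geq |\{\text{facets of }\calO_P\}|$, with equality exactly when $P$ is X-shape-free, so a facet count distinguishes the two polytopes whenever an X-shape is present. Your outline here is sound, though the ``local-to-global'' step needs the actual combinatorial work.

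Your reverse implication, however, has a genuine gap. You claim that for X-shape-free $P$ the transfer map $\phi_P$ ``collapses to a single linear map with integer matrix,'' attributing its nonlinearity to X-shape configurations. This is false already for the three-element poset $V=\{a,b,c\}$ with $a\prec b$ and $a\prec c$, which contains no X-shape. Here $\phi_V(x)_a = x_a - \max(x_b,x_c)$ is genuinely piecewise linear on $\calO_V$, and no affine map realizes the vertex correspondence $I\mapsto\max(I)$: the unique affine map sending $(0,0,0),(1,0,0),(1,1,0),(1,0,1)$ to $(0,0,0),(1,0,0),(0,1,0),(0,0,1)$ sends $(1,1,1)$ to $(-1,1,1)$, not to $(0,1,1)$. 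The polytopes $\calO_V$ and $\calC_V$ \emph{are} unimodularly equivalent, but via $(x_a,x_b,x_c)\mapsto(1-x_a,x_b,x_c)$, which does not respect the transfer-map bijection on vertices. In general the nonlinearity of $\phi_P$ is triggered by any element with more than one upper cover, not only by X-shapes; the unimodular equivalence in the X-shape-free case has to be produced by a different construction. Your fallback inductive scheme is too vague to repair this, and Proposition~\ref{p_sum} only handles disjoint unions and ordinal sums, which do not exhaust X-shape-free posets.
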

\noindent Here, the \textit{X-shape} poset is a poset $\{p_1,p_2,p_3,p_4,p_5\}$ equipped with the partial orders $p_1 \prec p_3 \prec p_4$ and $p_2 \prec p_3 \prec p_5$.

\medskip

%%%%%%%%%%%%%%%%%%%%%%%%%%%%%%%%%%%%%%%%%%%%%%%%%%%%%%%%%%%%%%%%%%%%%%%%%%%%%%%%%%%%%%%%%%%%%%%%%%%%%%%%%%%%%
\subsection{Preliminaries on stable set rings}
In this subsection, we recall stable set polytopes and stable set rings of graphs. 
For the fundamental materials on graph theory, consult, e.g., \cite{Die}. 

%Let $G$ be a finite simple graph on the vertex set $V(G)=[d]$ with the edge set $E(G)$, where we let $[d]=\{1,\ldots,d\}$ for $d \in \ZZ_{>0}$.
%In this paper, we only treat finite simple graphs, so we simply call graphs instead of finite simple graphs. 
For a simple graph $G$, let $V(G)=\{1,\ldots,d\}$ denote the vertex set of $G$ and let $E(G)$ denote the edge set of $G$.
We say that $S \subset V(G)$ is a \textit{stable set} or an \textit{independent set} (resp. a \textit{clique}) 
if $\{v,w\} \not\in E(G)$ (resp. $\{v,w\} \in E(G)$) for any distinct vertices $v,w \in S$. 
Note that the empty set and each singleton are regarded as stable sets. 

Given a subset $W \subset V(G)$, let $\rho(W)=\sum_{i\in W}\eb_i \in \RR^d$, where $\rho(\emptyset)$ stands for the origin of $\RR^d$. 
We define a lattice polytope associated with a graph $G$ as follows: 
\begin{align*}
\Stab_G=\conv(\{\rho(S) : S \text{ is a stable set}\}). 
\end{align*}
We call $\Stab_G$ the \textit{stable set polytope} of $G$. 
In addition, let $\kk[\Stab_G]$ denote a $\kk$-algebra defined by setting 
$$\kk[\Stab_G]=\kk[{\bf t}^\alpha t_0 : \alpha \in \Stab_G \cap \ZZ^d]=\kk\left[\; \left(\prod_{i\in S}t_i\right) t_0 : \text{$S$ is a stable set of $G$}\right],$$
which is called the {\em stable set ring} of $G$.

\medskip

In what follows, we treat the stable set rings of \textit{perfect graphs} since the following properties hold: 
\begin{itemize}
\item $\Stab_G$ is compressed if and only if $G$ is perfect (\cite{OH01,GPT}).
This implies that $\kk[\Stab_G]$ is normal if $G$ is perfect. 
Moreover, $\Stab_G$ has IDP if $G$ is perfect. 
Therefore, $\kk[\Stab_G]$ coincides with the Ehrhart ring of $\Stab_G$.
\item $\kk[\Stab_G]$ is Gorenstein if and only if $G$ all maximal cliques of $G$ have the same cardinality (\cite[Theorem 2.1]{OH06}).
\item The facets of $\Stab_G$ are completely characterized when $G$ is perfect (\cite[Theorem 3.1]{Ch}). 
From its description, we have 
\begin{align*}%\label{facets:stab}
\Stab_G=\{(x_1,\cdots,x_{d}) \in \RR^{d} \mid \, & x_i\geq 0 \text{ for }i\in [d], \;\\ & \, 1-\sum_{i\in Q}x_i\geq 0 \text{ for each maximal clique $Q$ of $G$}\}.
\end{align*}
\end{itemize}

\noindent From these properties, the stable set ring of a perfect graph can be described as the toric ring arising from a rational polyhedral cone as well as Hibi rings.
For a perfect graph $G$ with maximal cliques $Q_0,Q_1,\ldots,Q_n$ and $i\in \{0,1,\ldots,n+d\}$, 
let $\sigma_i$ be a linear form in $\RR^{d+1}$ defined by 
\begin{align*}
\sigma_i({\bf x})=
\begin{cases}
x_{0}-\sum_{j\in Q_i}x_j \;&\text{ if } i\in \{0,1,\ldots,n\}, \\
x_{i-n} \; &\text{ if }i\in \{n+1,\ldots,n+d\} 
\end{cases}
\end{align*}
for ${\bf x}=(x_0,x_1,\cdots,x_{d})$. Let $\tau_G=\mathrm{Cone}(\sigma_i : i\in \{0,1,\ldots,n+d\})\subset \RR^{d+1}$. 
Then, we can see that $\kk[\Stab_G]=\kk[\tau_G^\vee \cap \ZZ^d]$. 
%Let $e_1,\cdots,e_n$ be all the edges of $\widehat{P}$. 
We set a linear form $\sigma_G:\RR^{d+1} \rightarrow \RR^{n+d+1}$ by 
$$\sigma_G({\bf x})=(\sigma_0({\bf x}),\cdots,\sigma_{n+d}({\bf x})) \in \RR^{n+d+1}$$ 
for ${\bf x} \in \RR^{d+1}$.

\medskip

Given a poset $P$, we define the \textit{comparability graph} of $P$, denoted by $G(P)$, 
as a graph on the vertex set $V(G(P))=[d]$ with the edge set 
$$E(G(P))=\{\{i,j\} : \text{$p_i$ and $p_j$ are comparable in $P$}\}.$$ 
It is known that $G(P)$ is perfect for any $P$ (see, e.g., \cite[Section 5.5]{Die}) and the cliques of $G(P)$ one-to-one correspond to the chains of $P$. 
Moreover, we see that $\calC_P=\Stab_{G(P)}$ and if $P$ does not contain the X-shape, then the stable set ring 
$\kk[\calC_P]=\kk[\Stab_{G(P)}]$ is isomorphic to $\kk[P]$ by Theorem~\ref{X}.

\medskip

%%%%%%%%%%%%%%%%%%%%%%%%%%%%%%%%%%%%%%%%%%%%%%%%%%%%%%%%%%%%%%%%%%%%%%%%%%%%%%%%%%%%%%%%%%%%%%%%%%%%%%%%
\subsection{Preliminaries on graph theory}
At the end of this section, we prepare some more notions and notation on (directed) graphs.
We refer the reader to e.g., \cite{Die, GR} for the introduction to graph theory.

Let $G$ be a graph.
For a subset $W \subset V(G)$, let $G_W$ denote the induced subgraph with respect to $W$.
For a vertex $v$, we denote by $G \setminus v$ instead of $G_{V(G) \setminus \{v\}}$. 
Similarly, for $S \subset V(G)$, we denote by $G \setminus S$ instead of $G_{V(G) \setminus S}$. 
We sometimes denote a cycle $C$ of $G$ by $(v_1,\ldots,v_m)$ (i.e., $V(C)=\{v_1,\ldots,v_m\}\subset V(G) $, $v_i\neq v_j$ for $i,j\in [m]$ and $E(C)=\{\{v_i,v_{i+1}\}\in E(G) : i\in [m]\}$, where $v_{m+1}=v_1$).
We say that a cycle $C=(v_1,\ldots,v_m)$ of $G$ is a \textit{circuit} if $C$ has no chords, that is, $E(G_{V(C)})=E(C)$. 
%\{v_i,v_j\}$ is not an edge of $G$ for any $1\le i,j \le m$ with $|i-j|\ge 2$.
We also say that a graph is \textit{chordal} if each of its cycles of length at least $4$ has a chord. %, i.e. ifit contains no induced cycles other than triangles.To show that chordal graphs are perfect, we shall first characterize their structure. 
If $G$ is a graph with induced subgraphs $G_1, G_2$ and $S$, such that $V(G)=V(G_1)\cup V(G_2)$ and $V(S)=V(G_1) \cap V(G_2)$, we say that $G$ arises from $G_1$ and $G_2$ by \textit{pasting} these graphs together along $S$.
A characterization of chordal graphs is known as follows:
\begin{prop}[cf. {\cite[Proposition 5.5.1]{Die}}]\label{past}
A graph is chordal if and only if it can be constructed recursively by pasting along complete subgraphs, starting from complete graphs.
\end{prop}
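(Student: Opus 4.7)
The plan is to prove both directions of the equivalence by induction on the number of vertices, using the standard fact that in a chordal graph every minimal vertex separator is a clique.

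For the forward direction, I would proceed by induction on the number of pasting steps. The base case (complete graphs are chordal) is immediate, since a complete graph has no induced cycle of length at least $4$. For the inductive step, suppose $G$ arises by pasting two chordal graphs $G_1, G_2$ along a complete subgraph $S$. Let $C$ be a cycle of $G$ of length at least $4$. If $V(C) \subseteq V(G_1)$ or $V(C) \subseteq V(G_2)$, a chord exists by the induction hypothesis. Otherwise $C$ has vertices in both $V(G_1) \setminus V(S)$ and $V(G_2) \setminus V(S)$. Traversing $C$, any transition between these two sides must occur at a vertex of $S$, and since $C$ is a cycle there are at least two such transition vertices $s_1, s_2 \in V(S) \cap V(C)$. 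Because between $s_1$ and $s_2$ along $C$ (in either direction) there lies at least one vertex outside $S$, the vertices $s_1$ and $s_2$ are not consecutive on $C$; since $S$ is complete, the edge $\{s_1, s_2\}$ is then a chord of $C$.

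For the converse, I would argue by induction on $|V(G)|$. If $G$ is complete, there is nothing to prove. Otherwise $G$ contains two non-adjacent vertices $a, b$; let $S$ be a minimal $a$--$b$ separator in $G$. The key lemma is that in a chordal graph every minimal separator is a clique. Assuming this, let $A$ and $B$ be the vertex sets of the components of $G \setminus S$ containing $a$ and $b$ respectively (absorbing the other components into, say, $A$). Set $G_1 = G_{A \cup V(S)}$ and $G_2 = G_{B \cup V(S)}$; both are chordal as induced subgraphs of $G$, and both have strictly fewer vertices than $G$ because $a \notin V(G_2)$ and $b \notin V(G_1)$. By induction each is built by pasting along complete subgraphs from complete graphs, and $G$ itself is obtained from $G_1$ and $G_2$ by pasting along the complete subgraph $S$.

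The main obstacle is the auxiliary lemma that a minimal separator in a chordal graph is a clique. I would establish this directly: given a minimal $a$--$b$ separator $S$ and two vertices $s, s' \in V(S)$, minimality forces the existence of $a$--$b$ paths through $s$ and through $s'$ avoiding the rest of $S$; concatenating pieces of these paths yields a cycle of length at least $4$ containing $s$ and $s'$ but no other vertex of $S$, chosen so that the only possible chord is $\{s, s'\}$, which must therefore be an edge by chordality. Once this lemma is in hand the induction closes, and together with the forward direction proves the proposition.
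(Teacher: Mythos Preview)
Your proposal is correct and follows exactly the classical argument. Note, however, that the paper does not supply its own proof of this proposition: it is quoted verbatim as \cite[Proposition~5.5.1]{Die} and used as a black box to show that $G_{r_1,\ldots,r_n}$ is chordal. The approach you outline---induction on the pasting construction for the forward direction, and for the converse induction on $|V(G)|$ via the lemma that every minimal separator in a chordal graph is a clique---is precisely Diestel's proof, so there is nothing to compare beyond observing that you have reproduced the cited reference.

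One small point worth tightening in your forward direction: when you assert that the two transition vertices $s_1,s_2\in V(S)\cap V(C)$ are not consecutive on $C$, you should make explicit that \emph{both} arcs of $C$ between $s_1$ and $s_2$ contain a vertex outside $S$. This follows by choosing $s_1,s_2$ as the first vertices of $S$ encountered when walking from a fixed $a\in V(C)\cap(V(G_1)\setminus V(S))$ in each direction: one arc then contains $a$, and the other must contain some $b\in V(C)\cap(V(G_2)\setminus V(S))$. With that adjustment the argument is complete.
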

\noindent Moreover, it is known that every chordal graph is perfect (cf. \cite[Proposition 5.5.2]{Die}).

\medskip

For a connected graph $G$, a subgraph $T$ of $G$ is called a \textit{spanning tree} if $T$ is a connected graph with $V(T)=V(G)$ and contains no cycles.
For each $e\in E(G)\setminus E(T)$, there is a unique cycle $C_e$ in $T+e$, where $T+e$ is the subgraph of $G$ on the vertex set $V(T)$ with the edge set $E(T)\cup \{e\}$. We call $C_e$ the \textit{fundamental cycle} of $e$ with respect to $T$.

The \textit{flow space} of a directed graph $A$ is the subspace of $\RR^{E(A)}$ generated by the vectors $x\in \RR^{E(A)}$ such that $D_A x=0$, where $D_A$ is the \textit{incidence matrix} of $A$, which is the $\{0,\pm1\}$-matrix with rows and columns indexed by the vertices and edges of $A$, respectively, such that the $ve$-entry of $D_A$ is equal to $1$ if the vertex $v$ is the head of the edge $e$, $-1$ if $v$ is the tail of $e$, and $0$ otherwise. 
Let $C=(v_1,\ldots,v_m)$ be a cycle in $A$. 
Using the orientation of $A$, the cycle $C$ determines an element $\vb(C) \in \RR^{E(A)}$ as follows: 
\begin{align*}
\vb(C)^{(e)}=
\begin{cases}
0 \; &\text{ if $e\notin E(C)$}, \\
1 \; &\text{ if $e=\{v_i,v_{i+1}\}$ and $v_{i+1}$ is the head of $e$}, \\
-1 \; &\text{ if $e=\{v_i,v_{i+1}\}$ and $v_{i+1}$ is the tail of $e$}.
\end{cases}
\end{align*}
We refer to $\vb(C)$ as the \textit{signed characteristic vector} of $C$. 
%The sign on an edge in the cycle is $+1$ if the edge is oriented in the same sense as the cycle, and $-1$ otherwise. 
It is known that the signed characteristic vectors of the fundamental cycles with respect to a spanning tree of $A$ form bases of the flow space of $A$. %(see, e.g., \cite). 
For a cycle $C$ in $A$, we set
\begin{align*}
\supp^+(C)=\{e\in E(C) : \vb(C)^{(e)}>0\} \quad \text{ and } \quad \supp^-(C)=\{e\in E(C) : \vb(C)^{(e)}<0\}.
\end{align*}

\bigskip

%%%%%%%%%%%%%%%%%%%%%%%%%%%%%%%%%%%%%%%%%%%%%%%%%%%%%%%%%%%%%%%%%%%%%%%%%%%%%%%%%%%%%%%%%%%%%%%%%%%%%%%%%%%
%%%%%%%%%%%%%%%%%%%%%%%%%%%%%%%%%%%%%%%%%%%%%%%%%%%%%%%%%%%%%%%%%%%%%%%%%%%%%%%%%%%%%%%%%%%%%%%%%%%%%%%%%%%
%%%%%%%%%%%%%%%%%%%%%%%%%%%%%%%%%%%%%%%%%%%%%%%%%%%%%%%%%%%%%%%%%%%%%%%%%%%%%%%%%%%%%%%%%%%%%%%%%%%%%%%%%%%
\section{Conic divisorial ideals of toric rings}\label{sec_conic}

In this section, we discuss conic divisorial ideals of toric rings whose divisor class group is a free abelian group and give an idea to describe them. 
In particular, we determine conic divisorial ideals of Hibi rings and stable set rings by using the idea.

%%%%%%%%%%%%%%%%%%%%%%%%%%%%%%%%%%%%%%%%%%%%%%%%%%%%%%%%%%%%%%%%%%%%%%%%%%%%%%
\subsection{Description of conic divisorial ideals of toric rings}
Throughout this subsection, let $R$ be the toric ring defined in (\ref{R}) and we assume that the divisor class group of $R$ is isomorphic to $\ZZ^r$.
Moreover, let $\beta_1,\ldots,\beta_n$ be the weights of $R$.
Furthermore, let $\bar{\beta}_1,\ldots,\bar{\beta}_{n'}$ be weights of $R$ such that $n'$ is the minimal number with $\{\bar{\beta}_1,\ldots,\bar{\beta}_{n'}\}=\{\beta_1,\ldots,\beta_n\}$, and let $m_i$ be the multiplicity of $\bar{\beta}_i$ for $i\in [n']$, that is, $m_i=|\{j\in [n] : \beta_j=\bar{\beta}_i\}|$.
By Proposition~\ref{strongly}, each element of $\calW(R) \cap \ZZ^r$ one-to-one corresponds to a conic divisorial ideal of $R$, where
\begin{align*}
\calW(R) = \Big\{ \sum_{i=1}^n a_i\beta_i \in \RR^r : a_i\in [0,1) \Big\}=\Big\{ \sum_{i=1}^{n'} \bar{a}_i\bar{\beta}_i \in \RR^r : \bar{a}_i\in [0,m_i) \Big\}.
\end{align*}
On the other hand, we define 
$$\calW'(R) = \Big\{\sum_{i=1}^n a_i\beta_i \in \RR^r : a_i\in [0,1] \Big\}=\Big\{ \sum_{i=1}^{n'} \bar{a}_i\bar{\beta}_i \in \RR^r : \bar{a}_i\in [0,m_i] \Big\}.$$
Note that $\calW'(R)$ is a lattice polytope since it is the Minkowski sum of lattice segments $\{\bar{a}_i\bar{\beta}_i : \bar{a}_i\in [0,m_i]\}$.
%By translating $\calW'(R)$ by $-(m_1\bar{\beta}_1+\cdots+m_{n'}\bar{\beta}_{n'})/2$ and applying \cite[Propositon 2.2.2]{BLSWZ} to it, the following lemma holds: 
According to oriented matroid theory, we can determine the faces of $\calW'(R)$ as follows. 
We define the sign function $\sign : \RR \to \{+,-,0\}$ by setting
\begin{align*}
\sign(x)=\begin{cases}
+ \; &\text{ if } x>0, \\
0 \; &\text{ if } x=0, \\
- \; &\text{ if } x<0, \end{cases}
\end{align*}
and define partial order on $\{+,-,0\}$ by setting $0\prec +$ and $0\prec -$, while $+$ and $-$ are incomparable.
We consider the subset of $\{+,-,0\}^{n'}$:
$$S=\{(\sign(\langle \bar{\beta}_1, \nb \rangle), \ldots, \sign(\langle \bar{\beta}_{n'}, \nb \rangle)) : \nb \in \RR^{n'}\setminus \{0\}\}.$$
%Moreover, on the set $\{+,-,0\}^{n'}$, we use 
Note that $S$ can be regarded as a poset by using componentwise partial ordering: for $s,s'\in S$, $s\preceq s'$ if and only if $s^{(i)} \preceq s'^{(i)}$ for all $i\in [n']$.
By \cite[Proposition 2.2.2]{BLSWZ}, there is an order-reversing bijection between $S$ and the set of faces of $\calW'(R)$ (except for the empty set and $\calW'(R)$ itself), partially ordered by inclusion. 
In particular, by considering the correspondence between the facets of $\calW'(R)$ and the minimal elements of $S$, the following lemma holds:
\begin{lem}\label{lem}
If there exist $\nb\in \ZZ^r\setminus \{0\}$ and $\bar{\beta}_{i_1},\ldots,\bar{\beta}_{i_{r-1}}$ such that $\bar{\beta}_{i_1},\ldots,\bar{\beta}_{i_{r-1}}$ are linearly independent and $\langle \nb, \bar{\beta}_{i_j} \rangle=0$ for all $j\in [r-1]$, then 
$$F=\Big\{\sum_{\langle \nb, \beta_i \rangle> 0 } \beta_i+\sum_{\langle \nb, \beta_i \rangle= 0 } a_i\beta_i \in \RR^r : a_i\in [0,1] \Big\}$$
is a facet of $\calW'(R)$. Conversely, all facets of $\calW'(R)$ are obtained in this way.
\end{lem}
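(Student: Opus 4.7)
The plan is to exploit that $\calW'(R)$ is a zonotope---the Minkowski sum of the rational segments $[0,m_i]\bar\beta_i$---and to apply the correspondence between the sign vectors in $S$ and the faces of $\calW'(R)$ recalled just before the lemma from \cite[Proposition~2.2.2]{BLSWZ}. For any nonzero $\nb \in \RR^r$, the face of $\calW'(R)$ on which the linear functional $\langle \nb,-\rangle$ attains its maximum is
\[
F(\nb)=\sum_{\langle \nb,\bar\beta_i\rangle>0}m_i\bar\beta_i+\sum_{\langle \nb,\bar\beta_i\rangle=0}[0,m_i]\bar\beta_i,
\]
which, when rewritten with the weights $\beta_i$ taken with multiplicity, is exactly the set $F$ described in the statement. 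Under the order-reversing bijection with $S$, this face corresponds to the sign vector $(\sign(\langle \nb,\bar\beta_i\rangle))_{i=1}^{n'}$, and facets correspond to the minimal nonzero elements of $S$.

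Next I would verify that $F(\nb)$ is a facet precisely when the family $\{\bar\beta_i:\langle \nb,\bar\beta_i\rangle=0\}$ spans a hyperplane in $\RR^r$. Indeed, the affine span of $F(\nb)$ is a translate of the linear span of those $\bar\beta_i$, so $\dim F(\nb)=r-1$ if and only if this span is a hyperplane; equivalently, there exist $r-1$ linearly independent vectors $\bar\beta_{i_1},\ldots,\bar\beta_{i_{r-1}}$ annihilated by $\nb$. Such an independent family determines $\nb^\perp$ uniquely, and hence the associated sign vector in $S$ is minimal: any strict refinement would enlarge the zero-set among the $\bar\beta_i$, contradicting the fact that $r-1$ independent vectors already span the hyperplane $\nb^\perp$.

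For the converse, any facet of $\calW'(R)$ admits an outward normal, and since $\calW'(R)$ is a rational polytope this normal can be chosen in $\QQ^r$ and, after clearing denominators, in $\ZZ^r\setminus\{0\}$, which returns us to the situation above. The main point of care---rather than a genuine obstacle---is to match the sign-vector bookkeeping to the combinatorial description of $F$ in the statement, keeping track of the passage between the list $\beta_1,\ldots,\beta_n$ with multiplicities and the distinct weights $\bar\beta_1,\ldots,\bar\beta_{n'}$. This is a direct unpacking of the definitions, so the result follows once the zonotope dictionary has been set up.
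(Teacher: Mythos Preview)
Your proposal is correct and follows the same approach as the paper: both derive the lemma from the order-reversing bijection of \cite[Proposition~2.2.2]{BLSWZ} between sign vectors in $S$ and proper faces of the zonotope $\calW'(R)$, identifying facets with minimal sign vectors. The paper essentially just states that the lemma follows from this correspondence, while you spell out the dimension count and the rationality of the outward normal; your write-up is a faithful expansion of exactly what the paper has in mind.
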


Our goal is to determine the facet defining inequalities of a convex polytope representing conic classes. %in the divisor class group
Let $m\in \ZZ_{>0}$ and let $p_i, q_i\in \ZZ_{>0}$ for $i\in [m]$.
Moreover, for $i\in[m]$ and $j\in[r]$, let $c_{ij}$ be an integer such that the greatest common divisor of $c_{i1},\ldots,c_{ir}$ is equal to $1$ for all $i\in [m]$. 
We define two convex polytopes:
\begin{align*}
\calC&=\{(z_1,\ldots,z_r)\in \RR^r : -q_i \le \sum_{j=1}^r c_{ij}z_j \le p_i \text{ for all }i\in [m]\} \text{ and } \\
\calC'&=\{(z_1,\ldots,z_r)\in \RR^r : -q_i-1 \le \sum_{j=1}^r c_{ij}z_j \le p_i+1 \text{ for all }i\in [m]\}. 
\end{align*}
Note that if $\calC'$ is a lattice polytope, then we have $\int(\calC')\cap\ZZ^r=\calC\cap\ZZ^r$, where $\int(\calC')$ denotes the relative interior of $\calC'$.

\medskip

The following lemma is useful for describing conic divisorial ideals of toric rings.

\begin{lem}\label{conic}
\begin{itemize}
\item[(i)] If $\calW'(R)=\calC'$, then one has $\calW(R)\cap \ZZ^r=\calC\cap\ZZ^r$.
\item[(ii)] Suppose that $\calW'(R)\subset \calC'$. If all vertices of $\calC'$ are in $\calW'(R)$, then $\calW'(R)=\calC'$.
\end{itemize}
\end{lem}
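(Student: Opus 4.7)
The plan is to handle the two parts separately, with (ii) being essentially a one-line convexity argument and (i) requiring care with the facet structure of the zonotope $\calW'(R)$. For part (ii), I would use that $\calC'$ is the convex hull of its vertices and that $\calW'(R)$ is convex (as a Minkowski sum of segments); the hypothesis that every vertex of $\calC'$ lies in $\calW'(R)$ would then immediately yield $\calC'\subset\calW'(R)$, and combined with $\calW'(R)\subset\calC'$ this gives equality.

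For part (i), my first move is to observe that $\calW'(R)$ is itself a lattice polytope (as a Minkowski sum of the lattice segments $[0,\bar{\beta}_i]$), so the assumption $\calW'(R)=\calC'$ makes $\calC'$ a lattice polytope. By the remark just before the lemma, this yields $\calC\cap\ZZ^r=\int(\calC')\cap\ZZ^r=\int(\calW'(R))\cap\ZZ^r$, so the task reduces to proving $\calW(R)\cap\ZZ^r=\int(\calW'(R))\cap\ZZ^r$. For the easier inclusion ``$\supset$'', I plan to use that the linear map $\phi\colon\RR^n\to\RR^r$, $(a_i)\mapsto\sum a_i\beta_i$, is surjective (the $\beta_i$ span $\Cl(R)\otimes\RR\cong\RR^r$) and hence open; a standard fiber argument then shows $\phi((0,1)^n)=\int(\calW'(R))$, so every integer point in the interior admits a representation with $a_i\in(0,1)\subset[0,1)$ and therefore lies in $\calW(R)$.

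The reverse inclusion ``$\subset$'' is where I expect the main obstacle. I would argue by contradiction: assume $z=\sum a_i\beta_i\in\calW(R)\cap\ZZ^r$ with $a_i\in[0,1)$ lies on some facet of $\calW'(R)$, which by Lemma~\ref{lem} is $F_{\nb}$ for some $\nb\in\ZZ^r\setminus\{0\}$. Pairing the two resulting expressions for $z$ with $\nb$ would give
\[
\sum_{\langle\nb,\beta_i\rangle>0}(1-a_i)\langle\nb,\beta_i\rangle+\sum_{\langle\nb,\beta_i\rangle<0}(-a_i)\langle\nb,\beta_i\rangle=0.
\]
Both summands are non-negative, and the first is strictly positive whenever $\{i:\langle\nb,\beta_i\rangle>0\}$ is non-empty (because $1-a_i>0$). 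This is precisely where the hypothesis $\calW'(R)=\calC'$ enters crucially: every facet bound appearing in $\calC'$ is of the form $p_i+1\geq 2$ or $-q_i-1\leq -2$, so no facet of $\calW'(R)$ passes through the origin; this forces both $\{i:\langle\nb,\beta_i\rangle>0\}$ and $\{i:\langle\nb,\beta_i\rangle<0\}$ (the latter via the same argument applied to $-\nb$) to be non-empty, producing the contradiction. The hardest part will be carrying out this case analysis cleanly, and subsidiarily verifying the openness-based identification $\phi((0,1)^n)=\int(\calW'(R))$.
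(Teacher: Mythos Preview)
Your proposal is correct, and the overall architecture---reduce (i) to comparing $\calW(R)$ with $\int(\calW'(R))$, handle each inclusion via the facet description of Lemma~\ref{lem}, and treat (ii) by convexity---matches the paper. The differences are in execution.

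For the inclusion $\int(\calW'(R))\subset\calW(R)$, the paper gives a one-line scaling argument: since $0\in\calW'(R)$, any $\beta$ in the interior has $k\beta\in\calW'(R)$ for some $k>1$, and dividing the resulting representation by $k$ lands the coefficients in $[0,1/k)\subset[0,1)$. This avoids the openness/fiber argument entirely; your version is correct but works harder than necessary.

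For the inclusion $\calW(R)\subset\int(\calW'(R))$, both proofs boil down to showing that for every facet normal $\nb$ the set $\{i:\langle\nb,\beta_i\rangle>0\}$ is nonempty. Here the arguments genuinely diverge. The paper proves this \emph{unconditionally} (without using $\calW'(R)=\calC'$) by invoking a structural result of Chouinard on affine semigroups, which implies that $\{\beta_j:j\neq i\}$ already generates $\ZZ^r$ as a semigroup for every $i$; hence no $\nb$ can pair nonpositively with all the $\beta_i$. Your route is more elementary and instead exploits the hypothesis: since $p_i,q_i\geq 1$, every defining inequality of $\calC'$ is strict at the origin, so $0\in\int(\calW'(R))$, and therefore the maximum of $\langle\nb,\cdot\rangle$ on $\calW'(R)$ is strictly positive---forcing the positive set to be nonempty. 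The paper's approach yields the stronger statement $\int(\calW'(R))=\calW(R)$ with no hypothesis on $\calC'$, at the price of citing an external theorem; your approach stays self-contained but uses the specific form of $\calC'$. (Incidentally, you only need the positive set nonempty for the contradiction; the remark about the negative set is superfluous.)
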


\begin{proof}
(i) We show that $\int(\calW'(R))=\calW(R)$.
This implies 
\begin{align}\label{prop3.1}
\calW(R)\cap\ZZ^r=\int(\calW'(R))\cap\ZZ^r=\int(\calC')\cap\ZZ^r=\calC\cap\ZZ^r.
\end{align}
Note that $\dim \calW'(R)=\dim \calC'=r$.
For any $\beta \in \int(\calW'(R))$, there exists $k>1$ such that $k\beta \in \calW'(R)$. Thus, we have $\beta \in \calW(R)$ and hence $\int(\calW'(R))\subset \calW(R)$.

To prove the reverse inclusion, we need only show that if $\beta \in \calW'(R)$ is in the boundary $\partial\calW'(R)$ of $\calW'(R)$, then $\beta \notin \calW(R)$.
Let $\calS=\sigma(\tau^{\vee}\cap \ZZ^d)$ and let $\pi_i : \RR^n \to \RR$ be the $i$-th projection for each $i\in [n]$.
Note that the group of differences $\langle \calS \rangle$ of $\calS$ coincide with $\sigma(\ZZ^d)$ and $\pi_i(s)=\sigma_i(\alpha)$ for $s=\sigma(\alpha)\in \calS$.
Since $\pi_i(\langle \calS \rangle)=\sigma_i(\ZZ^d)=\ZZ$ for all $i\in [n]$ and for all $i,j \in [n]$ with $i\neq j$, there exists $s=\sigma(\alpha)\in \sigma(\tau^{\vee}\cap \ZZ^d)$ such that $\pi_j(s)=\sigma_j(\alpha)>0=\sigma_i(\alpha)=\pi_i(s)$, the set $T_i=\{\eb_j+\langle \calS \rangle : i\neq j\in [n]\}$ generates $\ZZ^n/\langle \calS \rangle$ as a semigroup for every $i\in [n]$ (\cite[Theorem 2]{Cho}).
%According to the equivalence between (1)(3) in \cite[Theorem 2]{Cho}, one obtain that for all $i\in [n]$
This implies
$$\ZZ_{\ge 0}\beta_1+\cdots+\widehat{\ZZ_{\ge 0}\beta_i}+\cdots+\ZZ_{\ge 0}\beta_n=\ZZ^r$$
for all $i\in [n]$, where $\widehat{\;}$ indicates an element to be omitted. 
Therefore, for any $\nb \in \ZZ^r$, there exists $j\in [n]$ such that $\langle \nb, \beta_j \rangle>0$.
Since $\beta \in \partial\calW'(R)$, there is a facet $F$ of $\calW'(R)$ with $\beta \in F$.
From Lemma~\ref{lem}, $\beta_j$ with $\langle \nb_F, \beta_j \rangle>0$ must appear as a summand in $\beta=\sum_{i\in [n]}a_i\beta_i$, where $\nb_F \in \ZZ^r$ is an outer normal vector of the supporting hyperplane defining $F$. 
Thus, we have $\beta \notin \calW(R)$.

\medskip

(ii) Let $v_1,\ldots,v_s$ be the vertices of $\calC'$. 
Since $v_k\in \calW'(R)$ for each $k\in [s]$, we can write $v_k=\sum_{i=1}^n a_{ki}\beta_i$ for some $a_{ki}\in [0,1]$. 
On the other hand, for any $z\in \calC'$, we can also write $z=\sum_{k=1}^s t_k v_k$ with $t_k\in [0,1]$ and $\sum_{k=1}^st_k=1$. 
Thus, $z=\sum_{k=1}^s t_k(\sum_{i=1}^n a_{ki}\beta_i)=\sum_{i=1}^n(\sum_{k=1}^s t_k a_{ki})\beta_i$. 
Since $\sum_{k=1}^s t_k a_{ki}\in [0,1]$ for all $i\in [n]$, we have $z\in \calW'(R)$, and hence $\calW'(R)=\calC'$.
\end{proof}

\medskip

%%%%%%%%%%%%%%%%%%%%%%%%%%%%%%%%%%%%%%%%%%%%%%%%%%%%%%%%%%%%%%%%%%%%%%%%%%%%%%
\subsection{Conic divisorial ideals of Hibi rings}
\label{subsec_conic_Hibi}

In this subsection, we consider the conic divisorial ideals of Hibi rings. 

%First, in order to discuss the class group of a Hibi ring, we prepare some terminologies. 
Let $P$ be a poset such that the Hasse diagram $\calH(\widehat{P})$ of $\widehat{P}$ has $d+1$ vertices and $n$ edges.
For $p \in \widehat{P} \setminus \{\hat{1}\}$, let $U(p)=\{\{p,q\} \in E(\calH(\widehat{P})) : q \text{ covers } p\}$.
%denote the set of all elements in $\widehat{P}$ which cover $p$. 
Similarly, for $p \in \widehat{P} \setminus \{\hat{0}\}$, let $D(p)=\{\{p,q\} \in E(\calH(\widehat{P})) : q \text{ is covered by } p\}$.
%denote the set of all elements in $\widehat{P}$ which are covered by $p$. 

%Then, we discuss the class group $\Cl(\kk[P])$ for describing conic divisorial ideals of Hibi rings. 
By definition of $\sigma_P(-)$ and (\ref{relation_divisor}), we see that 
the prime divisors $\calD_e$ indexed by the edges $e$ of $\calH(\widehat{P})$ satisfy the relations: 
\begin{align}
\label{relation_div_hibi}
\sum_{e \in U(p)} \calD_e=\sum_{e' \in D(p)} \calD_{e'} \text{ for }p \in \widehat{P} \setminus \{\hat{0}, \hat{1}\}, 
\sum_{e \in U(\hat{0})}\calD_e=0
\; \text{ and }\;  
\sum_{e \in D(\hat{1})}\calD_e=0.
\end{align}
In particular, we can take prime divisors corresponding to edges not contained in a spanning tree as generators of $\Cl(\kk[P])$, 
thus we obtain that $\Cl(\kk[P]) \cong \ZZ^n/\sigma_P(\ZZ^d)\cong  \ZZ^{n-d}$ (\cite[Theorem]{HHN}).
Moreover, any Weil divisor can be described as $\sum_{i=1}^{n-d}a_i\calD_{e_{d+i}}$ and we identify this with $(a_1,\cdots,a_{n-d})\in\ZZ^{n-d}$.

Note that $\calH(\widehat{P})$ can be regarded as a directed graph by orienting the edge $\{p,q\}\in E(\calH(\widehat{P}))$ from $p$ to $q$ if $q$ covers $p$.
In what follows, we fix a spanning tree $T$ of $\calH(\widehat{P})$ and let $e_1,\cdots,e_d$ be its edges. %for simplicity of notation. 
Thus, let $e_{d+1},\cdots,e_n$ be the remaining edges of $\calH(\widehat{P})$.
In addition, for $i\in [n-d]$, let $F_i$ be the fundamental cycle of $e_{d+i}$ with respect to $T$, and we assume that $e_{d+i} \in \supp^+(F_i)$. 
For $e\in E(\calH(\widehat{P}))$, let $\beta_e$ be the weight corresponding to the prime divisor $\calD_e$.
Then, $\beta_{e_{d+i}}=\eb_i$ for $i\in [n-d]$ and the other weights $\beta_{e_j}$ for $j\in [d]$ are uniquely determined by the relation (\ref{relation_div_hibi}).

\begin{prop}\label{weight_hibi}
Work with the same notation as above.
Then $\beta_e=\sum_{i\in [n-d]}\vb(F_i)^{(e)}\eb_i$. 
Moreover, for $w=\sum_{e\in E(\calH(\widehat{P}))} a_e\beta_e$ and a cycle $C$ of $\calH(\widehat{P})$, we have
$$\sum_{i\in [n-d]}  \vb(C)^{(e_{d+i})}w^{(i)}=\sum_{e\in E(\calH(\widehat{P}))}a_e \vb(C)^{(e)}.$$
\end{prop}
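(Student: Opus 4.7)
My plan is to show that the $\ZZ$-linear map $\Phi:\ZZ^{E(\calH(\widehat P))}\to\ZZ^{n-d}$ defined on basis vectors by $\Phi(\eb_e)=\sum_i\vb(F_i)^{(e)}\eb_i$ coincides with the map sending $\eb_e\mapsto\calD_e$, once $\Cl(\kk[P])$ is identified with $\ZZ^{n-d}$ via $\calD_{e_{d+i}}\mapsto \eb_i$. Both maps are $\ZZ$-linear, so it suffices to check agreement on the chosen generators $\eb_{e_{d+k}}$ and that $\Phi$ factors through the relations~(\ref{relation_div_hibi}). Agreement on the non-tree edges is immediate: among the non-tree edges, only $e_{d+i}$ belongs to $F_i$, and by the choice of orientation $\vb(F_i)^{(e_{d+i})}=+1$, which gives $\Phi(\eb_{e_{d+k}})=\eb_k$.

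To see the factorization, I would observe that the relations~(\ref{relation_div_hibi}) are precisely the vertex-by-vertex equations $\sum_e(D_A)_{p,e}\calD_e=0$, where $D_A$ is the incidence matrix of the oriented Hasse diagram $\calH(\widehat P)$; the cases $p=\hat 0$ and $p=\hat 1$ only involve one-sided sums because all edges incident to those vertices share a common orientation. Applying $\Phi$ and interchanging the order of summation gives
$$\sum_e(D_A)_{p,e}\Phi(\eb_e)=\sum_{i=1}^{n-d}\Bigl(\sum_e(D_A)_{p,e}\vb(F_i)^{(e)}\Bigr)\eb_i=\sum_{i=1}^{n-d}(D_A\vb(F_i))^{(p)}\eb_i=0,$$
since each $\vb(F_i)$ lies in the flow space $\ker D_A$. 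Hence $\Phi$ descends to $\Cl(\kk[P])$, and having verified agreement on a generating set, the two descended maps are equal; this yields $\beta_e=\sum_i\vb(F_i)^{(e)}\eb_i$ for every edge~$e$.

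For the second assertion, I would substitute the formula just established into $w=\sum_e a_e\beta_e$ to obtain $w^{(i)}=\sum_e a_e\vb(F_i)^{(e)}$, so that
$$\sum_{i=1}^{n-d}\vb(C)^{(e_{d+i})}w^{(i)}=\sum_e a_e\sum_{i=1}^{n-d}\vb(C)^{(e_{d+i})}\vb(F_i)^{(e)}.$$
The identity therefore reduces to showing $\vb(C)=\sum_i\vb(C)^{(e_{d+i})}\vb(F_i)$ in $\RR^{E(\calH(\widehat P))}$, which is the standard fact that the fundamental cycles form a basis of the flow space of the oriented graph $\calH(\widehat P)$ with dual basis given by evaluation at the non-tree edges. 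Indeed, $\vb(F_j)^{(e_{d+i})}=\delta_{ij}$, so both sides of the proposed expansion agree on every non-tree edge $e_{d+j}$, and being flows they agree everywhere.

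The only delicate step is bookkeeping of signs: the orientation of $\calH(\widehat P)$, the $\pm 1$ entries of $D_A$ and of $\vb(F_i)$, and the convention defining $\sigma_e$ must all be aligned so that~(\ref{relation_div_hibi}) really is $D_A\vec{\calD}=0$. Once that is pinned down, the whole proof is the orthogonality between the flow space and the row space of the incidence matrix, and nothing deeper is needed.
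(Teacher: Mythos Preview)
Your proof is correct and follows the same overall strategy as the paper: verify the formula on the non-tree edges, check that the proposed expression satisfies the relations~(\ref{relation_div_hibi}), and then use the fundamental-cycle basis of the flow space for the second assertion. The only difference is in how the relations are checked: the paper argues by an explicit case analysis at each vertex $p$ (whether $p\in V(F_i)$, and whether the two incident edges of $F_i$ lie in $U(p)$ or $D(p)$), whereas you recognize that~(\ref{relation_div_hibi}) is precisely $D_A\vec{\calD}=0$ and invoke $\vb(F_i)\in\ker D_A$ directly. This is the same underlying fact packaged more cleanly, and it spares you the sign bookkeeping the paper carries out by hand; the second part of your argument is identical to the paper's.
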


\begin{proof}
Let $\gamma_e=\sum_{i\in [n-d]}\vb(F_i)^{(e)}\eb_i$ for $e\in E(\calH(\widehat{P}))$ and we show that $\gamma_e=\beta_e$.
We see that $\gamma_{e_{d+j}}=\sum_{i\in [n-d]}\vb(F_i)^{(e_{d+j})}\eb_i=\eb_j=\beta_{e_{d+j}}$ for $j\in [n-d]$.
Thus, it is enough to show that $\gamma_e$'s satisfy the relation (\ref{relation_div_hibi}).
For $p \in \widehat{P} \setminus \{\hat{0}, \hat{1}\}$, let $u(p)=\sum_{e \in U(p)} \gamma_e$ and $d(p)=\sum_{e' \in D(p)} \gamma_{e'}$.
We fix $i\in [n-d]$. If $p\notin V(F_i)$, then $U(p)\cap E(F_i)=\emptyset$ and $D(p)\cap E(F_i)=\emptyset$. Thus, $u(p)^{(i)}=d(p)^{(i)}=0$. 
Suppose that $p \in V(F_i)$. Then, there are exactly two edges $e_1,e_2$ with $e_1,e_2\in E(F_i)$, and only the following two situations may happen:
(a) $e_1,e_2\in U(p)$ or $e_1,e_2\in D(p)$, or (b) $e_1\in U(p)$, $e_2\in D(p)$ or $e_2\in U(p)$, $e_1\in D(p)$. 
In case (a), we can see that $e_1\in \supp^+(F_i), e_2\in \supp^-(F_i)$ or $e_2\in \supp^+(F_i), e_1\in \supp^-(F_i)$, and hence $u(p)^{(i)}=d(p)^{(i)}=0$.
In case (b), we can see that $e_1\in \supp^+(F_i), e_2\in \supp^+(F_i)$ or $e_1\in \supp^-(F_i), e_2\in \supp^-(F_i)$, and hence $u(p)^{(i)}=d(p)^{(i)}=1$ or $-1$.
Therefore, $u(p)=d(p)$ holds. Similarly, we have $u(\hat{0})=d(\hat{1})=0$.
 
Moreover, for $w=\sum_{e\in E(\calH(\widehat{P}))} a_e\beta_e$, we have $w^{(i)}=\sum_{e\in E(\calH(\widehat{P}))}a_e \vb(F_i)^{(e)}$.
Furthermore, we see that $\vb(C)=\sum_{i\in [n-d]}\vb(C)^{(e_{d+i})}\vb(F_i)$ for a cycle $C$ in $\calH(\widehat{P})$ since $\vb(F_1),\ldots,\vb(F_{n-d})$ form bases of the flow space of $\calH(\widehat{P})$.
Therefore, we obtain that 
\begin{align*}
\sum_{i\in [n-d]}  \vb(C)^{(e_{d+i})}w^{(i)} &=\sum_{i\in [n-d]} \vb(C)^{(e_{d+i})}\Big(\sum_{e\in E(\calH(\widehat{P}))}a_e \vb(F_i)^{(e)}\Big) \\
&=\sum_{e\in E(\calH(\widehat{P}))}a_e \Big(\sum_{i\in [n-d]} \vb(C)^{(e_{d+i})}\vb(F_i)^{(e)}\Big)=\sum_{e\in E(\calH(\widehat{P}))}a_e \vb(C)^{(e)}.
\end{align*}
\end{proof}

In what follows, we will give a correspondence of which elements in $\Cl(\kk[P])\cong \ZZ^{n-d}$ describe the conic divisorial ideals.

Let $\calC(P)$ be a convex polytope defined by 
\begin{equation}\label{ccp}
\begin{split}
\calC(P)=\Bigg\{(z_1,\cdots,z_{n-d})& \in \RR^{n-d} : \\
-|\supp^-(C)|+1& \leq \sum_{i\in [n-d]}  \vb(C)^{(e_{d+i})}z_i \leq |\supp^+(C)|-1 \Bigg\},
\end{split}
\end{equation}
where $C$ runs over all circuits in $\calH(\widehat{P})$.
Moreover, let $\calC'(P)$ be a convex polytope defined by 
\begin{equation*}
\begin{split}
\calC'(P)=\Bigg\{(z_1,\cdots,z_{n-d})& \in \RR^{n-d} : \\
-|\supp^-(C')|& \leq \sum_{i\in [n-d]}  \vb(C')^{(e_{d+i})}z_i \leq |\supp^+(C')| \Bigg\},
\end{split}
\end{equation*}
where $C'$ runs over all cycles in $\calH(\widehat{P})$.

\begin{thm}\label{thm:conic_hibi}
Let the notation be the same as above. 
Then, each point in $\calC(P)\cap \ZZ^{n-d}$ one-to-one corresponds to the conic divisorial ideal of $\kk[P]$. 
%each point $(z_1,\cdots,z_{n-d}) \in \calC(P) \cap \ZZ^{n-d}$ one-to-one corresponds to the conic divisorial ideal of $\kk[P]$. 
\end{thm}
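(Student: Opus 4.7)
The plan is to apply Lemma~\ref{conic} combined with an integer-point comparison. By Proposition~\ref{strongly}, conic divisorial ideals of $\kk[P]$ correspond bijectively to $\calW(\kk[P]) \cap \ZZ^{n-d}$, so the task reduces to showing $\calW(\kk[P]) \cap \ZZ^{n-d} = \calC(P) \cap \ZZ^{n-d}$. I will establish (a) $\calW'(\kk[P]) = \calC'(P)$ and (b) $\int(\calC'(P)) \cap \ZZ^{n-d} = \calC(P) \cap \ZZ^{n-d}$; combined with the identity $\calW(\kk[P]) \cap \ZZ^{n-d} = \int(\calW'(\kk[P])) \cap \ZZ^{n-d}$ established in the proof of Lemma~\ref{conic}(i), these yield the result.

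For (a), the inclusion $\calW'(\kk[P]) \subset \calC'(P)$ follows from Proposition~\ref{weight_hibi}: for any $w = \sum_e a_e \beta_e$ with $a_e \in [0,1]$ and any cycle $C$ of $\calH(\widehat{P})$,
$$\sum_{i=1}^{n-d} \vb(C)^{(e_{d+i})} w^{(i)} \;=\; \sum_{e\in \supp^+(C)} a_e \,-\, \sum_{e\in \supp^-(C)} a_e \;\in\; \bigl[-|\supp^-(C)|,\; |\supp^+(C)|\bigr],$$
which is precisely the cycle inequality defining $\calC'(P)$. For the reverse inclusion I apply Lemma~\ref{conic}(ii) by showing that every vertex of $\calC'(P)$ lies in $\calW'(\kk[P])$: such a vertex is determined by $n-d$ linearly independent tight cycle inequalities, and setting $a_e = 1$ on $\supp^+$ and $a_e = 0$ on $\supp^-$ for each tight upper inequality (with the roles reversed for tight lower ones) produces a $\{0,1\}$-valued edge weighting that realizes the vertex as $\sum_e a_e \beta_e$; consistency of these assignments across overlapping tight cycles is guaranteed by circuit elimination in the cycle matroid of $\calH(\widehat{P})$.

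For (b), observe that for $z \in \ZZ^{n-d}$, strictness in a cycle inequality is equivalent, by integrality of both sides, to a slack of at least one, i.e.\ to the shifted inequality $-|\supp^-(C)|+1 \leq \sum_i \vb(C)^{(e_{d+i})} z_i \leq |\supp^+(C)|-1$. Thus $\int(\calC'(P)) \cap \ZZ^{n-d}$ consists of integer points satisfying these shifted inequalities for every cycle, whereas $\calC(P)$ imposes them only for circuits. I argue by induction on cycle length that the shifted circuit inequalities imply all shifted cycle inequalities: a non-circuit cycle $C$ has a chord $e$ splitting $C$ into two strictly shorter cycles $C_1, C_2$ meeting along $e$ with opposite orientations, whence $\vb(C) = \vb(C_1) + \vb(C_2)$ (the $e$-coordinate cancels) and a short count gives $|\supp^{\pm}(C)| = |\supp^{\pm}(C_1)| + |\supp^{\pm}(C_2)| - 1$, so summing the shifted inequalities for $C_1$ and $C_2$ produces exactly the shifted inequality for $C$.

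The main obstacle is the vertex-identification in part (a): the linear-algebraic picture via signed characteristic vectors makes the target transparent, but simultaneously satisfying several tight cycle constraints by a single $[0,1]$-valued edge weighting requires careful matroid-theoretic bookkeeping, since an edge lying in two cycles whose constraints are both tight must receive values imposed by both cycles without conflict.
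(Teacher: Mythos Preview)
Your overall architecture matches the paper's: prove $\calW'(\kk[P])=\calC'(P)$ via Lemma~\ref{conic}(ii), then pass to integer points via Lemma~\ref{conic}(i), and finally reduce from cycles to circuits. Part~(b) is fine and is essentially the paper's reference to the ``fourth step'' of \cite[Theorem~2.4]{HN}.

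The gap is exactly where you flag it. For the vertex step you take $n-d$ tight cycle constraints, set $a_e=1$ on the relevant $\supp^+$ and $a_e=0$ on the relevant $\supp^-$, and assert that ``consistency of these assignments across overlapping tight cycles is guaranteed by circuit elimination.'' That sentence is not a proof. The danger is concrete: an edge $e$ may lie in $\supp^+(C_s)\cap\supp^-(C_t)$ for two of your tight cycles, and then your recipe demands $a_e=1$ and $a_e=0$ simultaneously. Nothing you have written rules this out \emph{a priori}, and circuit elimination in the cycle matroid does not by itself force $C^+\cap C^-=\emptyset$ for an arbitrary set of $n-d$ independent cycle constraints whose hyperplanes happen to meet in a single point.

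What actually has to be shown---and what the paper does in Lemma~\ref{vertex_hibi}---is the contrapositive: if such a conflict occurs (say $\supp^+(C_1)\cap\supp^-(C_2)\neq\emptyset$), then the intersection point $v$ is \emph{not} a vertex of $\calC'(P)$ at all, because it violates another cycle inequality. The mechanism is to form $\bfu=\vb(C_1)+\vb(C_2)$, decompose $\bfu$ in the flow space as a sum of signed characteristic vectors of cycles $D_1,\dots,D_m$ whose supports avoid the conflict set $C^*$, and compare $\sum_i \bfu^{(e_{d+i})}v_i$ computed two ways: from the tight equalities for $C_1,C_2$ you get $|\supp^+(C_1)|+|\supp^+(C_2)|$, whereas the cycle inequalities for the $D_k$ (which $v$ would satisfy if it were in $\calC'(P)$) bound the same quantity strictly below that. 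This flow-decomposition step is the missing idea in your proposal; ``circuit elimination'' gestures at the right neighborhood but does not supply it. Once you insert this argument, your proof coincides with the paper's.
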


\begin{remark}\label{insu}
As mentioned in Section~\ref{sec_main}, this result has already been given in \cite[Theorem 2.4]{HN}.
It was proved that each conic divisorial ideal of $\kk[P]$ corresponds to a point in $\calC(P)\cap \ZZ^{n-d}$ (the set $\calC(P)$ defined in \cite{HN} coincides with that of this paper).
In the fifth step of the proof, it seems to prove the converse, that is, for each point $(m_{d+1},\ldots,m_n)\in \calC(P)\cap\ZZ^{n-d}$, there exist $\bfx \in (-1,0]^d$ and $(\overline{m}_1,\ldots,\overline{m}_n) \in \sigma_P(\ZZ^d)$ such that $m_i-\overline{m}_i=\ulcorner\sigma_{e_i}(\bfx)\urcorner$ for all $i\in [n]$, where $m_1=\cdots =m_d=0$.
But in fact, they have shown that for each point $(m_{d+1},\ldots,m_n)\in \calC(P)\cap\ZZ^{n-d}$ and for each $j\in \{d+1,\ldots,n\}$, there exist $\bfx \in (-1,0]^d$ and $(\overline{m}_1,\ldots,\overline{m}_n) \in \sigma_P(\ZZ^d)$ such that $m_i-\overline{m}_i=\ulcorner\sigma_{e_i}(\bfx)\urcorner$ for $i\in [d]$ and $i=j$, which does not guarantee that $m_i-\overline{m}_i=\ulcorner\sigma_{e_i}(\bfx)\urcorner$ for $i\in \{d+1,\ldots,n\}\setminus \{j\}$.

In this paper, we re-prove this theorem by a different technique.
We derive it by showing $\calW'(\kk[P])=\calC'(P)$ and using Lemma~\ref{conic} (i), rather than showing $\calW(\kk[P])\cap \ZZ^{n-d}=\calC(P)\cap\ZZ^{n-d}$ directly.
\end{remark}

\begin{proof}[Proof of Theorem~\ref{thm:conic_hibi}]
It is enough to show that $\calW(\kk[P])\cap \ZZ^{n-d}=\calC(P)\cap\ZZ^{n-d}$.
%We prove that $\calW'(\kk[P])=\calC'(P)$, so 
We first show that $\calW'(\kk[P])\subset \calC'(P)$. 
Take $w=\sum_{e\in E(\calH(\widehat{P}))}a_e\beta_e \in \calW'(\kk[P])$. 
For each cycle $C$ of $\calH(\widehat{P})$, it follows from Proposition~\ref{weight_hibi} that
\begin{align}\label{supp}
\sum_{i\in [n-d]}  \vb(C)^{(e_{d+i})}w^{(i)}=\sum_{e\in E(\calH(\widehat{P}))}a_e \vb(C)^{(e)}=\sum_{e\in \supp^+(C)}a_e-\sum_{e'\in \supp^-(C)}a_{e'}.
\end{align}
Since $a_e\in [0,1]$, we have  
\begin{align}\label{hyper_ineq}
-|\supp^-(C)| \leq \sum_{i\in [n-d]}  \vb(C)^{(e_{d+i})}w^{(i)} \leq |\supp^+(C)|.
\end{align}
Thus, $w\in \calC'(P)$, and hence $\calW'(\kk[P])\subset \calC'(P)$. 
Moreover, the hyperplanes 
\begin{align}\label{hyper_eq}
\sum_{i\in [n-d]}  \vb(C)^{(e_{d+i})}z_i = |\supp^+(C)| \text{ and } \sum_{i\in [n-d]}  \vb(C)^{(e_{d+i})}z_i = -|\supp^-(C)|
\end{align}
are supporting hyperplanes of $\calC'(P)$ because there exist elements in $\calW'(\kk[P])\subset \calC'(P)$ such that the equality of each side of (\ref{hyper_ineq}) holds respectively.

To show that $\calC'(P)\subset \calW'(\kk[P])$, we prove that any vertex of $\calC'(P)$ is in $\calW'$. %by Proposition~\ref{conic} (2).
Since the hyperplanes (\ref{hyper_eq}) support $\calC'(P)$, any vertex $v$ of $\calC'(P)$ can be represented as the intersection of $n-d$ hyperplanes of them.
By reversing the direction of cycles $C_1,\ldots,C_{n-d}$ of $\calH(\widehat{P})$, we may assume that these hyperplanes have the following forms:
\begin{align}\label{equ}
\sum_{i\in [n-d]}  \vb(C_k)^{(e_{d+i})}z_i = |\supp^+(C_k)| \quad \text{ for } k\in [n-d].
\end{align}
From Lemma~\ref{vertex_hibi} below, we have $v\in \calW'$, and hence $\calW'(\kk[P])=\calC'(P)$ by Lemma~\ref{conic} (ii).

Moreover, from Lemma~\ref{conic} (i), we have 
\begin{align*}
\calW(\kk[P])\cap \ZZ^{n-d}=\Bigg\{(z_1,\cdots,z_{n-d})& \in \ZZ^{n-d} : \\
-|\supp^-(C')|+1& \leq \sum_{i\in [n-d]}  \vb(C')^{(e_{d+i})}z_i \leq |\supp^+(C')|-1 \Bigg\},
\end{align*}
where $C'$ runs over all cycles in $\calH(\widehat{P})$.
By the same argument as in the fourth step of the proof of \cite[Theorem 2.4]{HN}, the inequalities arising from a cycle having a chord can be omitted. Therefore, we obtain that $\calW(\kk[P])\cap \ZZ^{n-d}=\calC(P)\cap\ZZ^{n-d}$.
\end{proof}

\begin{lemma}\label{vertex_hibi}
Let $C^+=\bigcup_{k\in [n-d]}\supp^+(C_k)$ and $C^-=\bigcup_{k\in [n-d]}\supp^-(C_k)$. Suppose that the intersection of (\ref{equ}) is a unique point $v=(v_1,\ldots,v_{n-d})\in \RR^{n-d}$. 
Then, $v$ is a vertex of $\calC'(P)$ if and only if $C^+\cap C^-=\emptyset$, in which case $v$ is in $\calW'$.
\end{lemma}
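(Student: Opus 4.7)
The plan is to prove the two directions of the equivalence separately; the ``if'' direction will simultaneously deliver the auxiliary assertion $v\in\calW'(\kk[P])$.

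For the direction $C^+\cap C^-=\emptyset\Rightarrow v$ is a vertex and $v\in\calW'(\kk[P])$, I set $a_e=1$ for $e\in C^+$ and $a_e=0$ for all remaining edges; this assignment is consistent precisely because $C^+\cap C^-=\emptyset$, and yields $w=\sum_e a_e\beta_e\in\calW'(\kk[P])$. The identity (\ref{supp}) together with the inclusions $\supp^+(C_k)\subset C^+$ and $\supp^-(C_k)\subset C^-$ shows that $w$ satisfies each equation of (\ref{equ}), so by uniqueness of the solution one gets $v=w\in\calW'(\kk[P])\subset\calC'(P)$. Since $v$ is then the unique solution in $\RR^{n-d}$ of $n-d$ linearly independent active equations of $\calC'(P)$, it is a $0$-dimensional face, hence a vertex.

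For the reverse implication I argue the contrapositive: assuming $C^+\cap C^-\neq\emptyset$, I show $v\notin\calC'(P)$ (which rules out vertexhood). Pick $e^*\in\supp^+(C_k)\cap\supp^-(C_l)$ for some $k,l\in[n-d]$ and consider the integer flow $f=\vb(C_k)+\vb(C_l)$ on $\calH(\widehat{P})$, noting that $f^{(e^*)}=0$. The key tool is a sign-compatible decomposition $f=\sum_j n_j\vb(D_j)$ with simple cycles $D_j$, $n_j\in\ZZ_{>0}$, and $\vb(D_j)^{(e)}\in\{0,\sign(f^{(e)})\}$ for all $j,e$, obtained from an Eulerian decomposition of the balanced multidigraph having $|f^{(e)}|$ parallel copies of each edge $e$ oriented by $\sign(f^{(e)})$. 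If $v\in\calC'(P)$, applying the defining inequality of $\calC'(P)$ to each $D_j$ and combining with (\ref{equ}) yields
\begin{align*}
\sum_j n_j|\supp^+(D_j)|\geq \sum_i f^{(e_{d+i})}v_i=|\supp^+(C_k)|+|\supp^+(C_l)|.
\end{align*}
Sign-compatibility identifies the left-hand side with $\sum_e\max(f^{(e)},0)$, while the subadditivity of $\max(\cdot,0)$ and the strict drop $\max(f^{(e^*)},0)=0<1=\max(\vb(C_k)^{(e^*)},0)+\max(\vb(C_l)^{(e^*)},0)$ produce $\sum_e\max(f^{(e)},0)<|\supp^+(C_k)|+|\supp^+(C_l)|$, contradicting the previous display.

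The main obstacle is the sign-compatible cycle decomposition underpinning the contrapositive; I would isolate it as a short auxiliary lemma reducing to Eulerian decomposition of a balanced directed multigraph, after which the identification $\sum_j n_j|\supp^+(D_j)|=\sum_e\max(f^{(e)},0)$ follows edge by edge and the remainder of the argument is immediate.
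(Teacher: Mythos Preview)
Your proof is correct and follows essentially the same strategy as the paper: for the ``if'' direction you and the paper both exhibit $v=\sum_{e\in C^+}\beta_e\in\calW'(\kk[P])$ via (\ref{supp}), and for the contrapositive you both add two of the active cycle vectors, invoke a sign-compatible cycle decomposition of the resulting flow (the paper cites \cite[Theorem 14.2.2]{GR} for this), and derive a strict inequality contradicting (\ref{equ}). The only difference is bookkeeping---you track the cancellation via $\sum_e\max(f^{(e)},0)$ and subadditivity of $\max(\cdot,0)$, whereas the paper names the cancellation set $C^*=(\supp^+(C_1)\cap\supp^-(C_2))\cup(\supp^+(C_2)\cap\supp^-(C_1))$ and counts explicitly---but the two computations are identical in content.
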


\begin{proof}
It is enough to show that $v\in \calW' \subset \calC'(P)$ (resp. $v\notin \calC'(P)$) if $C^+\cap C^-=\emptyset$ (resp. $C^+\cap C^-\neq \emptyset$). Suppose that $C^+\cap C^-=\emptyset$. Then, $\sum_{e\in C^+}\beta_e \in \calW'$ satisfies (\ref{equ}) for all $k\in [n-d]$. 
In fact, by (\ref{supp}) and $C^+\cap C^-=\emptyset$, we have 
$$\sum_{i\in [n-d]}  \vb(C_k)^{(e_{d+i})}\Big(\sum_{e\in C^+}\beta_e\Big)^{(i)}=\sum_{e\in \supp^+(C_k)}1=|\supp^+(C_k)|.$$
Therefore, $v=\sum_{e\in C^+}\beta_e \in \calW' \subset \calC'(P)$.

If $C^+\cap C^-\neq \emptyset$, there exist $s, t\in [n-d]$ with $\supp^+(C_s)\cap \supp^-(C_t)\neq \emptyset$. We may assume that $s=1$ and $t=2$. We set $\bfu=\vb(C_1)+\vb(C_2)$ and
$$C^*=(\supp^+(C_1)\cap \supp^-(C_2))\cup (\supp^+(C_2)\cap \supp^-(C_1)).$$
Note that $C^*\neq\emptyset$. Since $v$ satisfies (\ref{equ}), we obtain that 

\begin{align}\label{eq1}
\sum_{i\in [n-d]}  \bfu^{(e_{d+i})}v_i&=\sum_{i\in [n-d]}  \Big(\vb(C_1)^{(e_{d+i})}+\vb(C_2)^{(e_{d+i})}\Big)v_i \notag \\
&=|\supp^+(C_1)|+|\supp^+(C_2)| \notag \\
&=|\supp^+(C_1)\setminus C^*|+|\supp^+(C_2)\setminus C^*|+|C^*|.
\end{align}

On the other hand, since $\bfu$ is in the flow space of $\calH(\widehat{P})$, we can write $\bfu=\sum_{i=1}^m \vb(D_i)$, where $D_1,\ldots,D_m$ are cycles of $\calH(\widehat{P})$ with $\supp^+(D_k) \subset (\supp^+(C_1)\cup\supp^+(C_2))\setminus C^*$ and $\supp^-(D_k) \subset (\supp^-(C_1)\cup\supp^-(C_2))\setminus C^*$ for all $k\in [m]$.
This fact follows from a similar argument as in the proof of \cite[Theorem 14.2.2]{GR}. If $v \in \calC'(P)$, then 
\begin{align*}%\label{ineq1}
\sum_{i\in [n-d]}  \vb(D_k)^{(e_{d+i})}v_i \le |\supp^+(D_k)| \quad \text{ for all } k\in [m]. 
\end{align*}
Thus, we have 
\begin{align*}%\label{ineq2}
\sum_{i\in [n-d]}  \bfu^{(e_{d+i})}v_i &=\sum_{k\in [m]}\Big(\sum_{i\in [n-d]}  \vb(D_k)^{(e_{d+i})}v_i\Big) \\
&\le \sum_{k\in [m]}|\supp^+(D_k)|=|\supp^+(C_1)\setminus C^*|+|\supp^+(C_2)\setminus C^*|,
\end{align*}
a contradiction to (\ref{eq1}). Hence, we obtain that $v\notin \calC'(P)$.
\end{proof}

\medskip

%%%%%%%%%%%%%%%%%%%%%%%%%%%%%%%%%%%%%%%%%%%%%%%%%%%%%%%%%%%%%%%%%%%%%%%%%%%%%%
\subsection{Conic divisorial ideals of stable set rings}\label{sec_conic_stab}
In this subsection, we consider conic divisorial ideals of stable set rings of perfect graphs.

In what follows, let $G$ be a perfect graph on the vertex set $V(G)=[d]$ which has maximal cliques $Q_0,Q_1,\ldots,Q_n$.
%Then, we discuss the class group $\Cl(\kk[\Stab_G])$ for describing conic divisorial ideals of stable set rings. 
By definition of $\sigma_G(-)$ and (\ref{relation_divisor}), we see that 
the prime divisor $\calD_i$ for $i\in \{0,1,\dots,n+d\}$ satisfies the relations: 
\begin{align}
\label{relation_div_stab}
\sum_{j=0}^n \calD_j=0 \quad
\; \text{ and }\;  \quad
\calD_{n+k}=\sum_{j=0}^n \chi_j(k)\calD_j \; \text{ for } k\in [d],
\end{align}
where
\begin{align*}
\chi_j(k) =
\begin{cases}
1 \; &\text{ if $k\in Q_j$}, \\
0 \; &\text{ if $k\notin Q_j$}.
\end{cases}
\end{align*}
In particular, we can see that prime divisors $\calD_1,\ldots,\calD_n$ generate $\Cl(\kk[\Stab_G])$, 
thus we have that $\Cl(\kk[\Stab_G]) \cong \ZZ^{n+d+1}/\sigma_G(\ZZ^{d+1})\cong  \ZZ^{n}$ (see \cite{HM3}).
Furthermore, let $\beta_i$ be the weight corresponding to the prime divisor $\calD_i$.
Then, we can determine the weights $\beta_i$ ($i\in\{0,1,\ldots,n+d\}$) by the relation (\ref{relation_div_stab}):
\begin{align}\label{weight_stab}
\beta_i =
\begin{cases}
\eb_i \; &\text{ if $i\in \{0,1,\ldots,n\}$}, \\
\sum_{j=0}^n \chi_j(k)\eb_j \; &\text{ if $i\in \{n+1,\ldots,n+d\}$}, \\
\end{cases}
\end{align}
where we let $\eb_0=-\eb_1-\cdots -\eb_n$. For $v\in V(G)$ and a finite multiset $L\subset \{0,1,\ldots,n\}$, let $m_L(v)= |\{l\in L : v\in Q_l\}|$.
Moreover, for finite multisets $I,J\subset \{0,1,\ldots,n\}$, we set
\begin{align}\label{XIJ}
X_{IJ}^+= \{v\in V(G) : m_{IJ}(v) >0\} \text{ and } X_{IJ}^-= \{v\in V(G) : m_{IJ}(v) <0\},
\end{align}
where $m_{IJ}(v)= m_I(v)-m_J(v)$.

\medskip

Let $\calC(G)$ and $\calC'(G)$ be two convex polytopes defined by
\begin{equation}\label{conic_stab}
\begin{split}
\calC(G)=\Bigg\{(&z_1,\cdots,z_{n}) \in \RR^n : \\ 
-&|J|+\sum_{v\in X_{IJ}^-}m_{IJ}(v)+1 \leq \sum_{i\in I}z_i -\sum_{j\in J}z_j \leq |I|+\sum_{v\in X_{IJ}^+}m_{IJ}(v)-1 \\
 & \text{ for finite multisets } I, J\subset \{0,1,\ldots,n\} \text{ with } |I|=|J| \text{ and } I\cap J=\emptyset \Bigg\}\text{ and }
\end{split}
\end{equation}
 
\begin{equation}\label{conic_stab'}
\begin{split}
\calC'(G)=\Bigg\{(z_1&,\cdots,z_{n}) \in \RR^n : \\
-&|J|+\sum_{v\in X_{IJ}^-}m_{IJ}(v) \leq \sum_{i\in I}z_i -\sum_{j\in J}z_j \leq |I|+\sum_{v\in X_{IJ}^+}m_{IJ}(v) \\
& \text{ for finite multisets } I, J\subset \{0,1,\ldots,n\} \text{ with } |I|=|J| \text{ and } I\cap J=\emptyset \Bigg\},
\end{split}
\end{equation}
where we let $z_0=0$.

\begin{remark}\label{rem_sta}
Note that an infinite number of inequalities appear in (\ref{conic_stab}) and (\ref{conic_stab'}).
But in fact, only finitely many inequalities are needed since it follows from Theorem~\ref{thm:conic_stab} below that $\calC'(G)$ coincides with $\calW'(\kk[\Stab_G])$. 
Therefore, $\calC(G)$ and $\calC'(G)$ are polytopes.
On the other hand, by using Lemma~\ref{lem}, we can determine the facet defining inequalities of $\calC'(G)$.
For example, since $\eb_0,\eb_1,\ldots,\eb_n$ must appear as a weight of $\kk[\Stab_G]$, for each $i,j\in \{0,1,\ldots,n\}$ with $i\neq j$, the inequality 
$$-1-|X_{\{i\}\{j\}}^-| \leq z_i -z_j \leq 1+|X_{\{i\}\{j\}}^+|$$
defines a facet of $\calC'(G)$.
\end{remark}

\begin{ex}
Let $\Gamma$ be the graph on the vertex set $\{1,\ldots,7\}$ with the edge set
$$E(\Gamma)=\{12,13,23,24,25,34,36,45,46,56,57,67\}.$$
See Figure~\ref{Gamma}. 

%%%%%%%%%%%%%%%%%%
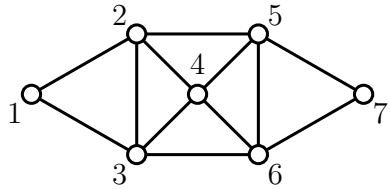
\begin{figure}[h]%n=3
\centering
{\scalebox{0.8}{
\begin{tikzpicture}[line width=0.05cm]%(b2)

\coordinate (3) at (0,0); \coordinate (1) at (-1.73,1); \coordinate (2) at (0,2); 
\coordinate (5) at (2,2); \coordinate (4) at (1,1); \coordinate (6) at (2,0); 
\coordinate (7) at (3.73,1); 
%edge

\draw (1)--(2); \draw (1)--(3); \draw (2)--(3); \draw (2)--(4);
\draw (2)--(5); \draw (3)--(4); \draw (3)--(6); \draw (4)--(5);
\draw (4)--(6); \draw (5)--(6); \draw (5)--(7); \draw (6)--(7);

%node

\draw [line width=0.05cm, fill=white] (1) circle [radius=0.15] node[below left] {\Large 1}; 
\draw [line width=0.05cm, fill=white] (2) circle [radius=0.15] node[above left] {\Large 2};
\draw [line width=0.05cm, fill=white] (3) circle [radius=0.15] node[below left] {\Large 3};
\draw [line width=0.05cm, fill=white] (4) circle [radius=0.15] node[] at (1,1.5) {\Large 4};
\draw [line width=0.05cm, fill=white] (5) circle [radius=0.15] node[above right] {\Large 5};
\draw [line width=0.05cm, fill=white] (6) circle [radius=0.15] node[below right] {\Large 6};
\draw [line width=0.05cm, fill=white] (7) circle [radius=0.15] node[below right] {\Large 7};

%\node at (5,0) {$$}; 

\end{tikzpicture}
}}
\caption{The graph $\Gamma$}
\label{Gamma}

\end{figure}
%%%%%%%%%%%%%%%%%%

\noindent Then, $\Gamma$ is a perfect graph and has $6$ maximal cliques:
$$Q_0=\{1,2,3\}, Q_1=\{2,3,4\},Q_2=\{2,4,5\},Q_3=\{3,4,6\},Q_4=\{4,5,6\} \text{ and } Q_5=\{5,6,7\}.$$ 
%Let $Q_0,\ldots,Q_5$ be those maximal cliques, from left to right.
Let $I=\{1,1,5\}$ and $J=\{0,2,3\}$. Then, we have 
$$m_{IJ}(1)=-1, \; m_{IJ}(2)=m_{IJ}(3)=m_{IJ}(4)=m_{IJ}(5)=m_{IJ}(6)=0 \; \text{ and } \; m_{IJ}(7)=1.$$
Thus, we obtain that $X_{IJ}^+=\{7\}$ and $X_{IJ}^-=\{1\}$. Therefore, we get the inequality
$$-4\le 2z_1+z_5 -z_2-z_3 \le 4.$$
Indeed, this inequality is a facet defining inequality of $\calC'(\Gamma)$.
\end{ex}

\begin{thm}\label{thm:conic_stab}
Let the notation be the same as above. 
Then, each point in $\calC(G)\cap \ZZ^n$ one-to-one corresponds to the conic divisorial ideal of $\kk[\Stab_G]$. 
%each point $(z_1,\cdots,z_{n-d}) \in \calC(P) \cap \ZZ^{n-d}$ one-to-one corresponds to the conic divisorial ideal of $\kk[P]$. 
\end{thm}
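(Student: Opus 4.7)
The plan is to mirror the proof of Theorem~\ref{thm:conic_hibi}: first show the equality of lattice polytopes $\calW'(\kk[\Stab_G])=\calC'(G)$, and then invoke Lemma~\ref{conic}(i). Since the inequalities defining $\calC(G)$ differ from those of $\calC'(G)$ by the unit shifts on the right-hand sides, a separate check (analogous to the last paragraph of the proof of Theorem~\ref{thm:conic_hibi}) shows that $\calC(G)\cap\ZZ^n$ coincides with the relative interior lattice points of $\calC'(G)$, after which $\calW(\kk[\Stab_G])\cap\ZZ^n=\calC(G)\cap\ZZ^n$ follows.

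For the inclusion $\calW'(\kk[\Stab_G])\subseteq\calC'(G)$, take $w=\sum_{l=0}^{n+d}a_l\beta_l$ with $a_l\in[0,1]$, and fix multisets $I,J\subset\{0,1,\ldots,n\}$ with $|I|=|J|$ and $I\cap J=\emptyset$. Using (\ref{weight_stab}) and the convention $z_0=0$, a direct expansion produces
\begin{align*}
\sum_{i\in I} w^{(i)}-\sum_{j\in J} w^{(j)}=\sum_{l\in I}a_l-\sum_{l\in J}a_l+\sum_{k=1}^d a_{n+k}\,m_{IJ}(k),
\end{align*}
in which the contributions involving $a_0$ cancel thanks to $|I|=|J|$ and $I\cap J=\emptyset$. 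Bounding each of the three pieces using $a_l\in[0,1]$ gives the upper bound $|I|+\sum_{v\in X_{IJ}^+}m_{IJ}(v)$ and the symmetric lower bound appearing in the definition of $\calC'(G)$. Moreover, setting $a_l=1$ for $l\in I$, $a_l=0$ for $l\in J$, and $a_{n+k}=1$ or $0$ according as $m_{IJ}(k)>0$ or $\leq 0$ realizes the upper inequality as an equality, so Lemma~\ref{lem} shows that each of these inequalities is a supporting hyperplane of $\calW'(\kk[\Stab_G])$.

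For the reverse inclusion, I would apply Lemma~\ref{conic}(ii) by checking that every vertex of $\calC'(G)$ lies in $\calW'(\kk[\Stab_G])$. A vertex $v$ is cut out as the intersection of $n$ facet hyperplanes, each coming from a pair $(I_s,J_s)$; after interchanging $I_s\leftrightarrow J_s$ when needed we may assume all are upper-bound hyperplanes. The natural candidate coefficients attaining $v$ are $a_l=1$ for $l$ in the support of $\bigcup_s I_s$, $a_l=0$ for $l$ in the support of $\bigcup_s J_s$, and $a_{n+k}=1$ or $0$ according to the sign of $m_{I_sJ_s}(k)$. This assignment yields a well-defined element of $\calW'(\kk[\Stab_G])$ realizing $v$ exactly when the supports satisfy a compatibility condition analogous to $C^+\cap C^-=\emptyset$ in Lemma~\ref{vertex_hibi}: the support of $\bigcup_s I_s$ must be disjoint from that of $\bigcup_s J_s$, and no $k$ may admit two pairs with $m_{I_sJ_s}(k)$ of opposite signs. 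When this compatibility fails, the proof of Lemma~\ref{vertex_hibi} adapts: summing the offending linear forms and rewriting the sum as a nonnegative combination of other upper-bound facet forms contradicts $v\in\calC'(G)$.

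The main obstacle I foresee is precisely this vertex analysis, which replaces the flow-space decomposition of cycles in the Hasse diagram (the technical heart of Lemma~\ref{vertex_hibi}) by a decomposition in the lattice generated by differences of multiset indicator vectors of maximal cliques. Because the incidence of vertices and maximal cliques in a perfect graph is considerably less rigid than the edge structure of a Hasse diagram, producing the required decomposition will require a careful combinatorial argument exploiting perfection (for instance, through Chv\'atal's facet description \cite{Ch}). Once this vertex lemma is established, the two inclusions yield $\calW'(\kk[\Stab_G])=\calC'(G)$, Lemma~\ref{conic}(i) converts this to $\calW(\kk[\Stab_G])\cap\ZZ^n=\calC(G)\cap\ZZ^n$, and Proposition~\ref{strongly} produces the claimed bijection with conic divisorial ideals.
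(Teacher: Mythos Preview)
Your overall strategy---establish $\calW'(\kk[\Stab_G])=\calC'(G)$ via the two inclusions and then apply Lemma~\ref{conic}(i)---is exactly what the paper does, and your expansion of $\sum_{i\in I}w^{(i)}-\sum_{j\in J}w^{(j)}$ agrees with the paper's computation~(\ref{supp_stab}). The vertex approach via Lemma~\ref{conic}(ii) is also the paper's route, and leads to its Lemma~\ref{vertex_stab}.

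Where you go astray is in diagnosing the difficulty of the vertex lemma. You expect to need a decomposition analogous to the flow-space argument of Lemma~\ref{vertex_hibi}, and you expect perfectness (via Chv\'atal's description) to enter. In fact the stable-set case is \emph{easier} than the Hibi case, precisely because the defining inequalities of $\calC'(G)$ are indexed by \emph{arbitrary} multiset pairs $(I,J)$ with $|I|=|J|$ and $I\cap J=\emptyset$, and this family is closed under addition of linear forms. If two upper-bound hyperplanes, say for $(I_1,J_1)$ and $(I_2,J_2)$, are incompatible (some $v$ lies in $X_{I_1J_1}^+\cap X_{I_2J_2}^-$, or---as you correctly note---some index lies in $I_1\cap J_2$), one simply forms the single pair $I'=(I_1\cup I_2)\setminus(J_1\cup J_2)$, $J'=(J_1\cup J_2)\setminus(I_1\cup I_2)$ (as multisets). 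The inequality for $(I',J')$ is itself one of the defining inequalities of $\calC'(G)$, and comparing its right-hand side with $|I_1|+|I_2|+\sum_{X_{I_1J_1}^+}m_{I_1J_1}+\sum_{X_{I_2J_2}^+}m_{I_2J_2}$ gives the contradiction directly; no further decomposition into ``basic'' objects is needed, and perfectness plays no role beyond the initial setup. So the obstacle you foresee is not there.
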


\begin{proof}
We prove that $\calW(\kk[\Stab_G])\cap\ZZ^n=\calC(G)\cap\ZZ^n$ by the same discussion as in the case of Hibi rings.
We take $w=\sum_{i=0}^{n+d}a_i\beta_i \in \calW'(\kk[\Stab_G])$. It follows from (\ref{weight_stab}) that 
\begin{align*}
w^{(i)}=a_i+\sum_{v\in Q_i}a_{v+n} -\sum_{u\in Q_0}a_{u+n} -a_0.
\end{align*}
Therefore, for finite multisets $I,J\subset \{0,1,\ldots,n\}$ with $|I|=|J|$ and $I\cap J=\emptyset$,
\begin{align}\label{supp_stab}
\sum_{i\in I}w^{(i)}-\sum_{j\in J}w^{(j)}&=\sum_{i\in I}\Big(a_i+\sum_{v\in Q_i}a_{v+n} -\sum_{u\in Q_0}a_{u+n} -a_0\Big) \notag \\
&\hspace{4.0cm}-\sum_{j\in J}\Big(a_j+\sum_{v\in Q_j}a_{v+n} -\sum_{u\in Q_0}a_{u+n} -a_0\Big) \notag \\
&=\sum_{i\in I}a_i + \sum_{v=1}^{d}m_I(v)a_{v+n} - \sum_{u=1}^{d}m_J(u)a_{u+n} -\sum_{j\in J}a_j \notag \\
&=\sum_{i\in I}a_i -\sum_{j\in J}a_j +\sum_{v=1}^{d}m_{IJ}(v)a_{v+n}.
\end{align}
Since $a_i\in [0,1]$, we have 
\begin{align*}
-|J|+\sum_{v\in X_{IJ}^-}m_{IJ}(v) \leq \sum_{i\in I}w^{(i)} -\sum_{j\in J}w^{(j)} \leq |I|+\sum_{v\in X_{IJ}^+}m_{IJ}(v).
\end{align*}
Thus, $w\in \calC'(G)$, and hence $\calW'(\kk[\Stab_G])\subset \calC'(G)$. 
Furthermore, the hyperplanes 
\begin{align}\label{hyper_eq_stab}
\sum_{i\in I}z_i -\sum_{j\in J}z_j = |I|+\sum_{v\in X_{IJ}^+}m_{IJ}(v) \text{ and } \sum_{i\in I}z_i -\sum_{j\in J}z_j =-|J|+\sum_{v\in X_{IJ}^-}m_{IJ}(v).
\end{align}
are supporting hyperplanes of $\calC'(G)$.

%In order to show that $\calC'(G)\subset \calW'(\kk[\Stab_G])$, 
Next, we prove that any vertex of $\calC'(G)$ is in $\calW'(\kk[\Stab_G])$. %by Proposition~\ref{conic} (2).
We consider $n$ supporting hyperplanes (\ref{hyper_eq_stab}) whose intersection is a unique point $u$.
By alternating $I_k$ and $J_k$, we may assume that these hyperplanes have the following forms:
\begin{align}\label{equ_stab}
\sum_{i\in I_k}w^{(i)} -\sum_{j\in J_k}w^{(j)} = |I_k|+\sum_{v\in X_{I_kJ_k}^+}m_{I_kJ_k}(v) \quad \text{ for } k\in [n].
\end{align}
From the following lemma, we have $u\in \calW'(\kk[\Stab_G])$, and hence $\calW(\kk[\Stab_G])\cap\ZZ^n=\calC(G)\cap\ZZ^n$.
\end{proof}

\begin{lemma}\label{vertex_stab}
Let $X^+=\bigcup_{k\in [n]}X_{I_kJ_k}^+$ and $X^-=\bigcup_{k\in [n]}X_{I_kJ_k}^-$. Suppose that the intersection of (\ref{equ_stab}) is a unique point $u=(u_1,\ldots,u_n)\in \RR^{n}$. 
Then, $u$ is a vertex of $\calC'(G)$ if and only if $X^+\cap X^-=\emptyset$, in which case $u$ is in $\calW'(\kk[\Stab_G])$.
\end{lemma}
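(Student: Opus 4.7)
The plan is to mirror the proof of Lemma~\ref{vertex_hibi}: exhibit an explicit element $u^{\ast} \in \calW'(\kk[\Stab_G])$ witnessing $u$ when $X^+ \cap X^- = \emptyset$, and exhibit an inequality of $\calC'(G)$ violated by $u$ when $X^+ \cap X^- \neq \emptyset$.

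For the ``if'' direction I would set
\[
u^{\ast} = \sum_{i \in \bigcup_k I_k} \beta_i + \sum_{v \in X^+} \beta_{v+n},
\]
which lies in $\calW'(\kk[\Stab_G])$ because all coefficients are in $\{0,1\}$. Applying the expansion (\ref{supp_stab}) with these coefficients to the $k$-th equation (\ref{equ_stab}), the three pieces on the right attain their respective upper bounds: $\sum_{i \in I_k} a_i = |I_k|$ and $\sum_{j \in J_k} a_j = 0$ immediately, and $\sum_v m_{I_k J_k}(v)\,a_{v+n} = \sum_{v \in X_{I_k J_k}^+} m_{I_k J_k}(v)$, using $X_{I_k J_k}^+ \subset X^+$ together with the hypothesis $X^+ \cap X^- = \emptyset$ (which forces $X_{I_k J_k}^- \cap X^+ = \emptyset$). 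Thus $u^{\ast}$ satisfies every equation (\ref{equ_stab}); by the uniqueness hypothesis $u = u^{\ast} \in \calW'(\kk[\Stab_G]) \subset \calC'(G)$, so $u$ is a vertex lying in $\calW'(\kk[\Stab_G])$.

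For the ``only if'' direction, assume $X^+ \cap X^- \neq \emptyset$ and pick (after reindexing) $v^{\ast}$ with $m_{I_1 J_1}(v^{\ast}) > 0$ and $m_{I_2 J_2}(v^{\ast}) < 0$. Form $A = I_1 + I_2$, $B = J_1 + J_2$ as multiset sums, and let $I$ (resp.\ $J$) be the multiset of positive (resp.\ negative) excess multiplicities of $A - B$. Then $|I| = |J|$, $I \cap J = \emptyset$, and $m_{IJ}(v) = m_{I_1 J_1}(v) + m_{I_2 J_2}(v)$ for every vertex $v$. Adding the tight equations (\ref{equ_stab}) for $k = 1, 2$ gives
\[
\sum_{i \in I} u_i - \sum_{j \in J} u_j = |I_1| + |I_2| + \sum_{\ell = 1}^2 \sum_{v \in X_{I_\ell J_\ell}^+} m_{I_\ell J_\ell}(v),
\]
and I would bound this against $|I| + \sum_{v \in X_{IJ}^+} m_{IJ}(v)$ using $|I_1| + |I_2| = |A| \geq |I|$ and the elementary inequality $x^+ + y^+ \geq (x+y)^+$ applied termwise in $v$, which is strict precisely at vertices where $m_{I_1 J_1}$ and $m_{I_2 J_2}$ have opposite signs. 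The witness $v^{\ast}$ makes the comparison strict, so the $(I, J)$-inequality of $\calC'(G)$ is violated at $u$, contradicting $u \in \calC'(G)$.

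The main obstacle is the multiset bookkeeping: one must verify that $m_{IJ}$ decomposes additively as $m_{I_1 J_1} + m_{I_2 J_2}$ through the positive/negative part construction, and that the existence of a single $v^{\ast}$ is already enough to force \emph{strict} comparison of the two positive-part sums rather than weak. A subtlety is also to ensure that the coefficient assignments defining $u^{\ast}$ are internally consistent (i.e.\ that the conflict set $\bigcup_k I_k \cap \bigcup_k J_k$ is compatible with the equations); the natural way out is to check that whenever $X^+ \cap X^- = \emptyset$ and the $n$ hyperplanes meet at a unique point, such a conflict cannot arise, since a hypothetical conflict index $i_0 \in I_{k_1} \cap J_{k_2}$ would propagate through the cliques $Q_{i_0}$ and, modulo the cancellations in the other summands of $m_{I_\ell J_\ell}$, force a vertex into $X^+ \cap X^-$. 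The only remaining degenerate case $A = B$ (equivalently $(I, J) = (\emptyset, \emptyset)$) is ruled out by observing that the summed tight equations would then collapse to $0 = |I_1| + |I_2| + (\text{nonnegative})$, contradicting the nontriviality of the chosen hyperplanes.
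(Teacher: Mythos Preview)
Your overall approach matches the paper's: the witness $u^\ast$ in the ``if'' direction and the violated $(I,J)$-inequality in the ``only if'' direction are exactly those the paper uses (your multiset $(I,J)$ is the paper's $(I',J')$, and your inequality $x^+ + y^+ \ge (x+y)^+$ repackages the paper's comparison via $X_1^*,X_2^*$). The ``only if'' argument, including your handling of the degenerate case $A=B$, is sound.

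The genuine gap is precisely the subtlety you flag, and your proposed fix does not work. You claim that under $X^+\cap X^-=\emptyset$ an index $i_0\in I_{k_1}\cap J_{k_2}$ would propagate through $Q_{i_0}$ and force some vertex of $G$ into $X^+\cap X^-$; this is false. Take $G$ the path $3\text{--}1\text{--}2\text{--}4$ with maximal cliques $Q_0=\{1,2\}$, $Q_1=\{1,3\}$, $Q_2=\{2,4\}$ (so $n=2$), and choose $(I_1,J_1)=(\{1\},\{0\})$, $(I_2,J_2)=(\{0\},\{2\})$. The two equations \eqref{equ_stab} read $z_1=2$ and $-z_2=2$, meeting uniquely at $u=(2,-2)$; one computes $X^+=\{1,3\}$ and $X^-=\{2,4\}$, so $X^+\cap X^-=\emptyset$, yet $0\in I_2\cap J_1$. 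Your candidate $u^\ast=\beta_0+\beta_1+\beta_{n+1}+\beta_{n+3}=(1,-2)\neq u$, and in fact $u$ violates the $(\{1\},\{2\})$-inequality $z_1-z_2\le 3$ of $\calC'(G)$, so $u$ is not a vertex at all. Thus the ``if'' direction of the lemma is false as stated; the paper's own proof simply asserts that $u^\ast$ satisfies \eqref{equ_stab} without confronting this case. What Theorem~\ref{thm:conic_stab} actually needs is only the implication ``$u$ a vertex of $\calC'(G)$ $\Rightarrow$ $u\in\calW'$''; one route to salvage this is to add the hypothesis $\bigl(\bigcup_k I_k\bigr)\cap\bigl(\bigcup_k J_k\bigr)=\emptyset$ to the lemma and then argue separately that the $n$ supporting hyperplanes through a given vertex can always be chosen with this extra property.
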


\begin{proof}
It is enough to show that $u\in \calW' \subset \calC'(G)$ (resp. $u\notin \calC'(G)$) if $X^+\cap X^-=\emptyset$ (resp. $X^+\cap X^-\neq \emptyset$). Suppose that $X^+\cap X^-=\emptyset$. 
From (\ref{supp_stab}), we can see that $\sum_{l=0}^{n+d} \alpha_l\beta_l\in \calW'$ satisfies (\ref{equ_stab}) for all $k\in [n]$, where
\begin{align*}
\alpha_l=\begin{cases}
1 \; \text{ if }l\in \bigcup_{k=1}^n I_k \text{ or } l-n\in X^+, \\
0 \; \text{ otherwise. } \end{cases}
\end{align*} 
Therefore, we have $u=\sum_{l=0}^{n+d} \alpha_l\beta_l \in \calW' \subset \calC'(P)$.

\medskip

If $X^+\cap X^-\neq \emptyset$, there exist $s, t\in [n]$ with $X_{I_sJ_s}^+\cap X_{I_tJ_t}^-\neq \emptyset$. We may assume that $s=1$ and $t=2$. We set $I'=(I_1\cup I_2)\setminus (J_1\cup J_2)$, $J'=(J_1\cup J_2)\setminus (I_1\cup I_2)$,
$$X_1^*=X_{I_1J_1}^+\cap X_{I_2J_2}^-\text{ and }X_2^*= X_{I_2J_2}^+\cap X_{I_1J_1}^-.$$
Here, $I'$ and $J'$ are regarded as multisets.
Note that $X_1^*\neq\emptyset$, $X_{I'J'}^+\cup X_1^*\cup X_2^*=X_1^+\cup X_2^+$ and $m_{I'J'}(v)=m_{I_1J_1}(v)+m_{I_2J_2}(v)$ for all $v\in V(G)$.
Since $u$ satisfies (\ref{equ}), we obtain that
\begin{align}\label{eq1_stab}
\sum_{i\in I'}u_i-\sum_{j\in J'}u_j &= \Big(\sum_{i\in I_1}u_i-\sum_{j\in J_1}u_j \Big)+\Big(\sum_{i'\in I_2}u_{i'}-\sum_{j\in J_2}u_{j'} \Big)\notag \\
&=|I_1|+\sum_{v\in X_{I_1J_1}^+}m_{I_1J_1}(v)+|I_2|+\sum_{v'\in X_{I_2J_2}^+}m_{I_2J_2}(v')\notag \\
&=|I_1|+|I_2|+\sum_{v\in X_{I'J'}^+\setminus (X_1^*\cup X_2^*)}m_{I'J'}(v)+\sum_{v\in X_1^*}m_{I_1J_1}(v)+\sum_{v\in X_2^*}m_{I_2J_2}(v).
\end{align}

On the other hand, since $|I'|=|J'|$ and $I'\cap J'=\emptyset$, we have
\begin{align*}
\sum_{i\in I'}u_i-\sum_{j\in J'}u_j \le |I'|+\sum_{v\in X_{I'J'}^+}m_{I'J'}(v) 
\end{align*}
if $u\in \calC'(G)$, a contradiction to (\ref{eq1_stab}) because $|I'|\le |I_1|+|I_2|$ and $m_{I'J'}(v)<m_{I_1J_1}(v)$ (resp. $m_{I'J'}(v)<m_{I_2J_2}(v)$) for $v\in X_1^*$ (resp. $v\in X_2^*$).  
Thus, we obtain that $v\notin \calC'(G)$.

\end{proof}

We finally state the description of conic divisorial ideals of the stable set ring arising from the comparability graph of a poset $P$.
In this case, we expect to be able to describe them in terms of $P$ as follows.

Let $P$ be a poset and let $\calQ(\widehat{P})$ denote the set of maximal chains of $\widehat{P}$.
Moreover, let $C=(p_1,\ldots,p_s)$ be a cycle of $\calH(\widehat{P})$ and we may assume that
$$p_{m_1}=p_1\prec p_2 \prec \cdots \prec p_{M_1} \succ \cdots \succ p_{m_2} \prec \quad \cdots \quad \prec p_{M_k} \succ \cdots \succ p_s \succ p_{m_{k+1}}=p_1.$$
We set ${\bf U}_C=\calQ(\widehat{P}_{\succeq p_{M_1}})\times \cdots \times \calQ(\widehat{P}_{\succeq p_{M_k}})$ and ${\bf D}_C=\calQ(\widehat{P}_{\preceq p_{m_1}})\times \cdots \times \calQ(\widehat{P}_{\preceq p_{m_k}})$, where $\widehat{P}_{\succeq p}=\{q\in \widehat{P} : q\succeq p\}$ for $p\in \widehat{P}$ (we define $\widehat{P}_{\preceq p}$ analogously).
Furthermore, for $U=(U_1,\ldots,U_k)\in {\bf U}_C$ and $D=(D_1,\ldots,D_k)\in {\bf D}_C$, the sets
$$Q_i^{\uparrow}=D_i\cup \{p_{m_i},p_{m_i+1}\ldots,p_{M_i}\}\cup U_i \quad \text{ and } \quad Q_i^{\downarrow}=U_i\cup \{p_{M_i},p_{M_i+1}\ldots,p_{m_{i+1}}\}\cup D_{i+1}$$
are maximal chains of $\widehat{P}$ for each $i\in [k]$, where $U_{k+1}=U_1$. Fix $Q_0\in \calQ(\widehat{P})$ and we define 
\begin{multline*}%\label{conic_stab}
\fkC_P=\Bigg\{z \in \RR^{\calQ(\widehat{P})\setminus \{Q_0\}} : \\ %\notag \\
-k-\sum_{l=1}^k (m_{l+1}-M_l-1)+1 \leq \sum_{i\in 1}^k z^{(Q_i^{\uparrow})} -\sum_{j\in 1}^k z^{(Q_j^{\downarrow})} \leq k+\sum_{l=1}^k (M_l-m_l-1) -1\\
 \text{ for $U\in {\bf U}_C$ and $D\in {\bf D}_C$} \Bigg\},
\end{multline*}
where $C$ runs over all circuits in $\calH(\widehat{P})$ and we let $m_{k+1}=s+1$ and $z^{(Q_0)}=0$.

\medskip

We call a poset $\calX$ \textit{general X-shape} if $\calX$ is the ordinal sum of some chains and some disjoint unions of two chains.
The Hasse diagram of a general X-shape poset looks like the one shown in Figure~\ref{poset}. 
Suppose that $\widehat{P}$ contains a general X-shape subposet $\calX$ with $\calQ(\calX)\subset \calQ(\widehat{P})$ and $|\calQ(\calX)|\ge 4$.
For $Q\in \calQ(\calX)$, there exists $\overline{Q}\in \calQ(\calX)$ with $Q\cup \overline{Q}=\calX$, which is uniquely determined.
We set 
\begin{align*}%\label{conic_stab}
\fkX_P=\Bigg\{z \in \RR^{\calQ(\widehat{P})\setminus \{Q_0\}} :  %\notag \\
-1 \leq z^{(Q)}+z^{(\overline{Q})} -z^{(Q')}-z^{(\overline{Q'})} \leq 1 
 \text{ for $Q, Q' \in \calQ(\calX)$} \Bigg\},
\end{align*}
where $\calX$ runs over all general X-shape subposet of $\widehat{P}$ with $\calQ(\calX)\subset \calQ(\widehat{P})$ and $|\calQ(\calX)|\ge 4$.

\begin{ex}
Let $\Pi=\{p_1,\ldots,p_6\}$ be the poset which is the ordinal sum of the disjoint union of two elements and the disjoint union of three elements.
The Hasse diagram of $\widehat{\Pi}$ is shown in Figure~\ref{Pi}.

%%%%%%%%%%%%%%%%%
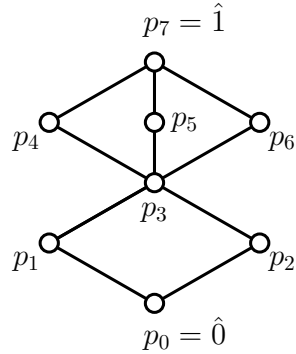
\begin{figure}[h]
\centering
{\scalebox{0.8}{
\begin{tikzpicture}[line width=0.05cm]%(b3)

\coordinate (1) at (0,0); \coordinate (2) at (-1.73,1); \coordinate (3) at (1.73,1); 
\coordinate (4) at (0,2); \coordinate (5) at (-1.73,3); \coordinate (6) at (0,3);
\coordinate (7) at (1.73,3); \coordinate (8) at (0,4); 

%edge

\draw (1)--(2); \draw (1)--(3); \draw (2)--(4); \draw (3)--(4);
\draw (4)--(5); \draw (4)--(6); \draw (4)--(7); \draw (2)--(4);
\draw (5)--(8); \draw (6)--(8); \draw (7)--(8);

%node

\draw [line width=0.05cm, fill=white] (1) circle [radius=0.15] node[] at (0.5,-0.5) {\Large $p_0=\hat{0}$}; 
\draw [line width=0.05cm, fill=white] (2) circle [radius=0.15] node[below left] {\Large $p_1$};
\draw [line width=0.05cm, fill=white] (3) circle [radius=0.15] node[below right] {\Large $p_2$};
\draw [line width=0.05cm, fill=white] (4) circle [radius=0.15] node[] at (0,1.5){\Large $p_3$};
\draw [line width=0.05cm, fill=white] (5) circle [radius=0.15] node[below left] {\Large $p_4$};
\draw [line width=0.05cm, fill=white] (6) circle [radius=0.15] node[] at (0.5,3) {\Large $p_5$};
\draw [line width=0.05cm, fill=white] (7) circle [radius=0.15] node[below right] {\Large $p_6$};
\draw [line width=0.05cm, fill=white] (8) circle [radius=0.15] node[] at (0.5,4.7) {\Large $p_7=\hat{1}$};

%\node at (5,0) {$$}; 

\end{tikzpicture}
}}
\caption{The Hasse diagram of $\widehat{\Pi}$}
\label{Pi}
\end{figure}
%%%%%%%%%%%%%%%%%

\noindent It has $6$ maximal chains, $4$ circuits and $3$ general X-shape subposets satisfying the appropriate conditions.
Let $C=(p_3,p_4,p_7,p_5)$ be a circuit of $\calH(\widehat{\Pi})$.
In this case, we can see that 
$${\bf U}_C=\calQ(\widehat{\Pi}_{\succeq p_7})=\{\{p_7\}\} \quad \text{ and } \quad {\bf D}_C=\calQ(\widehat{\Pi}_{\preceq p_3})=\{\{p_0,p_1,p_3\},\{p_0,p_2,p_3\}\}.$$
Let $U=U_1=\{p_7\}\in {\bf U}_C$ and $D=D_1=\{p_0,p_1,p_3\}\in {\bf D}_C$. Then, we have 
$$Q_1^{\uparrow}=\{p_0,p_1,p_3,p_4,p_7\} \quad \text{ and } \quad Q_1^{\downarrow}=\{p_7,p_5,p_3,p_1,p_0\},$$
and hence we get the inequality
$$-1\le z^{(Q_1^{\uparrow})}-z^{(Q_1^{\downarrow})}\le 1.$$

Next, we consider the general X-shape subposet $\calX=\widehat{\Pi}\setminus \{p_5\}$. 
Let $Q=\{p_0,p_1,p_3,p_4,p_7\}$ and $Q'=\{p_0,p_2,p_3,p_4,p_7\}$. Then, we have
$$\overline{Q}=\{p_0,p_2,p_3,p_6,p_7\} \quad \text{ and } \quad \overline{Q'}=\{p_0,p_1,p_3,p_6,p_7\},$$
and hence we get the inequality
$$-1\le z^{(Q)}+z^{(\overline{Q})}-z^{(Q')}-z^{(\overline{Q'})}\le 1.$$
\end{ex}

\begin{conj}
Let $P$ be a poset. Then, the conic divisorial ideals of $\kk[\calC_P]$ one-to-one correspond to the points in $\fkC_P\cap \fkX_P\cap \ZZ^{\calQ(\widehat{P})\setminus \{Q_0\}}$.
\end{conj}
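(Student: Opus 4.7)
The plan is to reduce the conjecture to Theorem~\ref{thm:conic_stab} via the identification $\kk[\calC_P]=\kk[\Stab_{G(P)}]$ recalled in the preliminaries, and then to translate the resulting inequalities defining $\calC(G(P))$ into the poset language of $\widehat{P}$. Under the bijection between maximal cliques of $G(P)$ and maximal chains $Q\in\calQ(\widehat{P})$, one identifies the divisor class group with $\ZZ^{\calQ(\widehat{P})\setminus\{Q_0\}}$, and for multisets $I,J$ of maximal chains of $\widehat{P}$ with $|I|=|J|$ and $I\cap J=\emptyset$, the multiplicity $m_I(v)$ coincides with the number of chains in $I$ passing through $v$. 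The goal is then to prove the set-theoretic equality $\calC(G(P))=\fkC_P\cap\fkX_P$ of polytopes.

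For the inclusion $\calC(G(P))\subseteq\fkC_P\cap\fkX_P$, I would realise each defining inequality of $\fkC_P$ and $\fkX_P$ as an instance of an inequality of $\calC(G(P))$. Given a circuit $C$ in $\calH(\widehat{P})$ with extension data $(U,D)$, I would take $I=\{Q_i^{\uparrow}\}_{i=1}^k$ and $J=\{Q_j^{\downarrow}\}_{j=1}^k$; a direct vertex count shows $m_{IJ}(v)=+1$ on the interiors of the up-segments of $C$, $m_{IJ}(v)=-1$ on the interiors of the down-segments, and $m_{IJ}(v)=0$ on peaks, valleys, and the vertices of $\bigcup_i(U_i\cup D_i)$ (each such vertex appears once in an up-chain and once in a down-chain). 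Summing yields $\sum_{v\in X_{IJ}^+}m_{IJ}(v)=\sum_l(M_l-m_l-1)$ and likewise for the negative part, matching the bounds defining $\fkC_P$. For $\fkX_P$, the pair $I=\{Q,\overline{Q}\}$, $J=\{Q',\overline{Q'}\}$ of complementary chains in a general X-shape $\calX$ yields $m_{IJ}\equiv 0$, since for each branch of $\calX$ exactly one of $Q,\overline{Q}$ and one of $Q',\overline{Q'}$ passes through it, and every chain-part of $\calX$ is traversed by all four chains; hence $X_{IJ}^{\pm}=\emptyset$ and the inequality collapses to the $\pm 1$ bounds of $\fkX_P$.

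The substantially harder inclusion $\fkC_P\cap\fkX_P\subseteq\calC(G(P))$ requires showing that every inequality of $\calC(G(P))$ is already implied by those of $\fkC_P$ and $\fkX_P$. Using Lemma~\ref{lem} applied to the weights (\ref{weight_stab}), one reduces to checking facet-defining inequalities of $\calC'(G(P))$: starting from a multiset pair $(I,J)$ that produces such a facet, the plan is to decompose the signed indicator $v\mapsto m_{IJ}(v)$ into elementary pieces, each of which should correspond either to routing one chain of $I$ into one of $J$ along a circuit of $\calH(\widehat{P})$ (recovering an inequality of $\fkC_P$) or to swapping branches inside a general X-shape subposet (recovering one of $\fkX_P$). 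A chord-removal argument parallel to the fourth step in the proof of Theorem~\ref{thm:conic_hibi} will also be needed in order to discard cycles that have chords, so that only genuine circuits and X-shape pairs contribute.

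The hard part will be this last decomposition and classification step. Unlike the Hibi-ring setting, where the signed indicators of circuits form a manifest basis of the flow space of $\calH(\widehat{P})$, here the bounds involve the nonlinear quantities $\sum_v\max(m_{IJ}(v),0)$, which do not split additively when $(I,J)$ is written as a sum of simpler pairs; moreover, several circuits and X-shapes can interact inside a single facet. So the reduction cannot be a purely linear-algebraic argument but rather a combinatorial classification of extremal sign patterns on $\calQ(\widehat{P})$ in the spirit of oriented matroid theory, and in particular one has to confirm that no further family of facets beyond those captured by $\fkC_P$ and $\fkX_P$ is required. This is where essentially all of the work is expected to concentrate.
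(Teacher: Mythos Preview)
The statement you are attempting to prove is labeled a \emph{conjecture} in the paper, not a theorem; the paper does not prove it. Immediately after stating it, the author only observes that the inequalities defining $\fkC_P$ and $\fkX_P$ are special cases of those defining $\calC(G(P))$, hence $\calC(G(P))\subset\fkC_P\cap\fkX_P$, and then writes ``We expect that the converse is true.'' That is the full extent of the paper's argument.

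Your treatment of the inclusion $\calC(G(P))\subseteq\fkC_P\cap\fkX_P$ is essentially the same as what the paper asserts (with more detail filled in), and your vertex-count verification for the two families of inequalities is correct in outline. However, your proposal does not constitute a proof of the reverse inclusion: you yourself identify the decomposition/classification step as ``the hard part'' and leave it entirely open, noting that the bounds are nonlinear in the multiset data and that several circuits and X-shapes can interact. This is precisely the obstruction that makes the statement a conjecture rather than a theorem, and nothing in your sketch overcomes it. In particular, there is no argument ensuring that circuits and general X-shape subposets exhaust the facet-defining inequalities of $\calC'(G(P))$; until such a classification is established, the reverse inclusion remains open, both in your proposal and in the paper.
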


We can see that the inequalities appearing in $\fkC_P$ and $\fkX_P$ are special forms of those appearing in $\calC(G(P))$.
Therefore, we have $\calC(G(P)) \subset \fkC_P\cap \fkX_P$, that is, each conic divisorial ideal of $\kk[\calC_P]$ corresponds to a point in $\fkC_P\cap \fkX_P\cap \ZZ^{\calQ(\widehat{P})\setminus \{Q_0\}}$.
We expect that the converse is true.
%In fact, if $P$ does not contain the X-shape, then $\fkC_P\cap \fkX_P$ and $\calC(P)$ coincide and

\bigskip

%%%%%%%%%%%%%%%%%%%%%%%%%%%%%%%%%%%%%%%%%%%%%%%%%%%%%%%%%%%%%%%%%%%%%%%%%%%%%%%%%%%%%%%%%%%%%%%%%%%%%
%%%%%%%%%%%%%%%%%%%%%%%%%%%%%%%%%%%%%%%%%%%%%%%%%%%%%%%%%%%%%%%%%%%%%%%%%%%%%%%%%%%%%%%%%%%%%%%%%%%%%
%%%%%%%%%%%%%%%%%%%%%%%%%%%%%%%%%%%%%%%%%%%%%%%%%%%%%%%%%%%%%%%%%%%%%%%%%%%%%%%%%%%%%%%%%%%%%%%%%%%%%
\section{The characterization of quasi-symmetric or weakly-symmetric toric rings}\label{sec_qw}

In this section, we characterize when our toric rings are quasi-symmetric or weakly-symmetric, that is, we prove Theorems~\ref{main2_hibi} and \ref{main2_stab}.

\medskip

%%%%%%%%%%%%%%%%%%%%%%%%%%%%%%%%%%%%%%%%%%%%%%%%%%%%%%%%%%%%%%%%%%%%%%%%%%%%%%%%%%%%%%%%%%%%%%%%%%%%
\subsection{Proof of Theorem~\ref{main2_hibi}}

\begin{proof}
Let $P$ be a poset such that $\calH(\widehat{P})$ has $d+1$ vertices and $n$ edges.

(i) $\Rightarrow$ (ii): This follows immediately from Proposition~\ref{p_sum} (i) and (ii).

(ii) $\Rightarrow$ (iii): If $\kk[P]$ is isomorphic to the tensor product of a polynomial ring and some Segre products of two polynomial rings, then it is also isomorphic to the Hibi ring of a general X-shape poset $\calX$.
We can easily compute the weights of $\kk[P] \cong \kk[\calX]$ by using Proposition~\ref{weight_hibi}.
In fact, since the circuits of $\calH(\widehat{\calX})$ are precisely the fundamental cycles of $\calH(\widehat{\calX})$ and their edge sets are disjoint, the weights of $\kk[P]$ is $\pm\eb_1,\ldots,\pm\eb_{n-d}$ or $0$, and $\pm\eb_1,\ldots,\pm\eb_{n-d}$ must appear.
Therefore, $\kk[P]$ is weakly-symmetric.

Before proving (iii) $\Rightarrow$ (i), we give an easy observation: $P$ has an element which is comparable with any other element of $P$ if and only if $\calH(\widehat{P})$ is not $2$-connected, i.e., there exists an element $p$ in $P$ such that $\calH(\widehat{P})\setminus p$ is not connected.
In this case, we can see that $P=P_{\prec p}\oplus P_{\succ p}$.
Therefore, we may assume that $\calH(\widehat{P})$ is 2-connected by Proposition~\ref{p_sum} (ii).

(iii) $\Rightarrow$ (i): Since $\calH(\widehat{P})$ is 2-connected, it can be constructed from the Hasse diagram $H$ (see Figure~\ref{poset1}) by successively adding paths to graphs already constructed (see, e.g., \cite[Proposition 3.1.1]{Die}). 
In this case, we can replace ``paths'' with ``chains''.
Moreover, by removing an edge from $H$ and each added chain, we get a spanning tree $T$ of $\calH(\widehat{P})$. We denote those edges by $e_1,\ldots,e_{n-d}$ and assume that $e_1\in E(H)$ and $e_2$ is in the added chain to the first. 
For $i\in [n-d]$, let $F_i$ be the fundamental cycle of $e_i$ with respect to $T$. Note that $F_1=H$.

Since $\kk[P]$ is weakly-symmetric and any weight of $\kk[P]$ is in $\{0,1,-1\}^{n-d}$, there is the weight $-\eb_2$. % for some $e\in E(\calH(\widehat{P}))$. 
However, it is impossible because any edge of $F_2$ is contained in $E(F_1)$ or the edge set of the added chain to the first, in particular, $\supp^-(F_2) \subset E(F_1)$. % for some $i\in [n-d]\setminus \{2\}$. 
Thus, $\calH(\widehat{P})=H$.

Suppose that $\kk[P]$ is Gorenstein.
If the condition (i) is satisfied, then we can compute the weights of $\kk[P]$ as in (ii) $\Rightarrow$ (iii) and check that $\kk[P]$ is quasi-symmetric since $P$ is pure. 
Clearly, (iv) $\Rightarrow$ (iii) holds, and hence those four conditions are equivalent. 
\end{proof}

%%%%%%%%%%%%%%%%%%%%%
\begin{figure}[h]
\centering
{\scalebox{0.8}{
\begin{tikzpicture}[line width=0.05cm]

\coordinate (N11) at (0,0); \coordinate (N12) at (0,1); \coordinate (N13) at (0,2); 
\coordinate (N14) at (0,3); \coordinate (N15) at (0,4); \coordinate (N16) at (0,5);
\coordinate (N21) at (3,0); \coordinate (N22) at (3,1); \coordinate (N23) at (3,2); 
\coordinate (N24) at (3,3); \coordinate (N25) at (3,4); \coordinate (N26) at (3,5);
\coordinate (M1) at (1.5,-1.0); \coordinate (M8) at (1.5,5.0);

%edge
\draw  (N11)--(N12); \draw  (N12)--(0,1.5); \draw  (N14)--(0,2.5);
\draw  (N14)--(N15); \draw[dotted]  (0,1.8)--(0,2.2);
\draw  (N21)--(N22);\draw  (N22)--(3,1.5); \draw  (N24)--(3,2.5);
\draw  (N24)--(N25); \draw[dotted]  (3,1.8)--(3,2.2);
\draw  (N11)--(M1); \draw  (N21)--(M1); \draw  (M8)--(N15); \draw  (M8)--(N25);

%node

\draw [line width=0.05cm, fill=white] (N11) circle [radius=0.15]; %node[below right] {\Large $p_1$}; 
\draw [line width=0.05cm, fill=white] (N12) circle [radius=0.15]; %node[below left] {\Large $p_2$};
\draw [line width=0.05cm, fill=white] (N14) circle [radius=0.15]; %node[below left] {\Large $p_3$}; 
\draw [line width=0.05cm, fill=white] (N15) circle [radius=0.15]; %node[below left] {\Large $p_{s_1-1}$};
\draw [line width=0.05cm, fill=white] (N21) circle [radius=0.15]; %node[below right] {\Large $p_{s_1+1}$}; 
\draw [line width=0.05cm, fill=white] (N22) circle [radius=0.15]; %node[below left] {\Large $p_{s_1+2}$};
\draw [line width=0.05cm, fill=white] (N24) circle [radius=0.15]; %node[below left] {\Large $p_{s_1+3}$}; 
\draw [line width=0.05cm, fill=white] (N25) circle [radius=0.15]; %node[below left] {\Large $p_{s_1+s_2-1}$};
\draw [line width=0.05cm, fill=white] (M1) circle [radius=0.15]; %node[] at (1,-0.3) {\Large $\hat{0}$};
\draw [line width=0.05cm, fill=white] (M8) circle [radius=0.15];

\end{tikzpicture}
} }
\caption{The Hasse diagram $H$}
\label{poset1}
\end{figure}
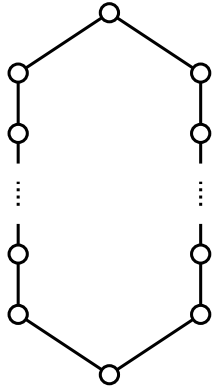

%%%%%%%%%%%%%%%%%%%%%

\medskip

%%%%%%%%%%%%%%%%%%%%%%%%%%%%%%%%%%%%%%%%%%%%%%%%%%%%%%%%%%%%%%%%%%%%%%%%%%%%%%%%%%%%%%%%%%%%%%%%%%%%%%%
\subsection{Proof of Theorem~\ref{main2_stab}}

\begin{proof}
Let $G$ be a perfect graph with maximal cliques $Q_0,Q_1,\ldots,Q_n$. %satisfied $|Q_0|=|Q_1|=\cdots=|Q_n|$

(i) $\Rightarrow$ (ii): In the case $n=0$, $\kk[\Stab_G]$ is the polynomial ring with $|Q_0|+1$ variables over $\kk$. 
Suppose that $n=1$ and let $Q=Q_0\cap Q_1$.
Note that for each $v\in Q$, we have $\{v,w\}\in E(G)$ for all $w\in V(G)$.
By observing stable sets of $G$, we can see that $\kk[\Stab_G]\cong \kk[\Stab_{G\setminus Q}]\otimes_{\kk}\kk[\Stab_{G_Q}]$ and $\kk[\Stab_{G\setminus Q}]$ is isomorphic to the Segre product of $\kk[\Stab_{G_{Q_0\setminus Q}}]$ and $\kk[\Stab_{G_{Q_1\setminus Q}}]$.
Furthermore, $\kk[\Stab_{G_Q}]$, $\kk[\Stab_{G_{Q_0\setminus Q}}]$ and $\kk[\Stab_{G_{Q_1\setminus Q}}]$ are polynomial rings.
Thus, $\kk[\Stab_G]$ is isomorphic to the tensor product of a polynomial ring and the Segre products of two polynomial rings

(ii) $\Rightarrow$ (iii): This is the same as in the case of Hibi rings.

(iii) $\Rightarrow$ (i): Since $\beta_i \in \{0,1,-1\}^n$ for any $i\in \{0,1,\ldots,n+d\}$ and $\kk[\Stab_G]$ is weakly-symmetric, the weights $-\eb_k$ ($k\in \{0,1,\ldots,n\}$) must appear. 
Equivalently, for each $j \in \{0,1,\ldots,n\}$, there exists $v_j \in V(G)$ such that $v_j\notin Q_j$ and $v_j\in Q_l$ for any $l\in \{0,1,\ldots,n\}\setminus \{j\}$.

If $n\ge 2$, then $\{v_s,v_t\}\in E(G)$ for any $s,t\in \{0,1,\ldots,n\}$ because there exists $u\in \{0,1,\ldots,n\}$ with $v_s,v_t\in Q_u$. Therefore, $\{v_0,v_1,\ldots,v_n\}$ is a clique of $G$, and a maximal clique containing it is different from $Q_0,Q_1,\ldots,Q_n$. 
Hence, we have $n\le 1$.

If $\kk[\Stab_G]$ is Gorenstein, then the conditions (i), (ii), (iii), and (iv) are equivalent by the same argument as in the case of Hibi rings.
\end{proof}

\bigskip

%%%%%%%%%%%%%%%%%%%%%%%%%%%%%%%%%%%%%%%%%%%%%%%%%%%%%%%%%%%%%%%%%%%%%%%%%%%%%%%%%%%%%%%%%%%%%%%%%%%%%%%%%%
%%%%%%%%%%%%%%%%%%%%%%%%%%%%%%%%%%%%%%%%%%%%%%%%%%%%%%%%%%%%%%%%%%%%%%%%%%%%%%%%%%%%%%%%%%%%%%%%%%%%%%%%%%
%%%%%%%%%%%%%%%%%%%%%%%%%%%%%%%%%%%%%%%%%%%%%%%%%%%%%%%%%%%%%%%%%%%%%%%%%%%%%%%%%%%%%%%%%%%%%%%%%%%%%%%%%%
\section{Construction of NCCRs for a special family of stable set rings}\label{sec_nccr}

Finally, in this section, we introduce a perfect graph $G_{r_1,\ldots,r_n}$ and give an NCCR for its stable set ring, i.e., 
we prove Theorem~\ref{main3}.

\medskip

%%%%%%%%%%%%%%%%%%%%%%%%%%%%%%%%%%%%%%%%%%%%%%%%%%%%%%%%%%%%%%%%%%%%%%%%%%%%%%%%%%%%%%%%%%%%%%%%%%%%%%%%%%%%%
\subsection{Perfect graphs $G_{r_1,\ldots,r_n}$}\label{family}

For an integer $n\ge 3$ and positive integers $r_1,\ldots,r_n$,
let $G_{r_1,\ldots,r_n}$ be the graph on the vertex set $V(G_{r_1,\ldots,r_n})=[2d]$ with the edge set $E(G_{r_1,\ldots,r_n})=\bigcup_{i=0}^n \big\{\{v,u\} : v,u\in Q_i\big\}$, where $d= \sum_{k=1}^n r_k$, $Q_0=\{d+1,\ldots,2d\}$ and for $i\in [n]$, we let
\begin{align*}
&Q_i^+=\Big\{r_1+\cdots+r_{i-1}+1,\ldots,r_1+\cdots+r_{i-1}+r_i\Big\},\\ &Q_i^-=\Big\{d+r_1+\cdots+r_{i-1}+1,\ldots,d+r_1+\cdots+r_{i-1}+r_i\Big\}\text{ and }\\
&Q_i=Q_i^+\cup (Q_0\setminus Q_i^-).
\end{align*}
%where $d= \sum_{k=1}^n r_k$, $Q_0=\{d+1,\ldots,2d\}$ and for $i\in [n]$, we let
% $$Q_i^+=\Big\{\sum_{k=1}^{i-1}r_k +1,\ldots,\sum_{k=1}^i r_k\Big\}, \; Q_i^-=\Big\{d+\sum_{k=1}^{i-1}r_k +1,\ldots,d+\sum_{k=1}^i r_k\Big\}\text{ and }Q_i=Q_i^+\cup (Q_0\setminus Q_i^-).$$
Note that $Q_i^+=Q_i\setminus Q_0$ and  $Q_i^-=Q_0\setminus Q_i$.
%\Big\{\sum_{k=1}^{i-1}r_k +1,\ldots,\sum_{k=1}^i r_k\Big\}\cup\Big(Q_0\setminus \Big\{d+\sum_{k=1}^{i-1}r_k +1,\ldots,d+\sum_{k=1}^i r_k\Big\}\Big).$$%For $i\in [n]$, we set 

\begin{ex}
We give drawings of $G_{1,1,1}$ and $G_{1,1,1,1}$ in Figures~\ref{G111} and \ref{G1111}, respectively.

%%%%%%%%%%%%%%%%%%%%%
\begin{figure}[h]%n=3

\begin{minipage}{0.485\columnwidth}
\centering
{\scalebox{0.75}{
\begin{tikzpicture}[line width=0.05cm]%(b2)

\coordinate (1) at (0,0); \coordinate (6) at (-1.73,1); \coordinate (2) at (-1.73,3); 
\coordinate (4) at (0,4); \coordinate (3) at (1.73,3); \coordinate (5) at (1.73,1); 

%edge

\draw (1)--(5); \draw (1)--(6); \draw (2)--(4); \draw (2)--(6);
\draw (3)--(4); \draw (3)--(5); \draw (4)--(5); \draw (4)--(6);
\draw (5)--(6);

%node

\draw [line width=0.05cm, fill=white] (1) circle [radius=0.15] node[] at (0,-0.5) {\Large 1}; 
\draw [line width=0.05cm, fill=white] (2) circle [radius=0.15] node[above left] {\Large 2};
\draw [line width=0.05cm, fill=white] (3) circle [radius=0.15] node[above right] {\Large 3};
\draw [line width=0.05cm, fill=white] (4) circle [radius=0.15] node[] at (0,4.5) {\Large 4};
\draw [line width=0.05cm, fill=white] (5) circle [radius=0.15] node[below right] {\Large 5};
\draw [line width=0.05cm, fill=white] (6) circle [radius=0.15] node[below left] {\Large 6};

\node at (0,5) {$\;$};

\end{tikzpicture}
}}
\caption{The graph $G_{1,1,1}$}
\label{G111}
\end{minipage}
\begin{minipage}{0.5\columnwidth}
\centering
{\scalebox{0.6}{
\begin{tikzpicture}[line width=0.05cm]%(b3)

\coordinate (7) at (0,0); \coordinate (1) at (0,2); \coordinate (3) at (0,4); 
\coordinate (5) at (0,6); \coordinate (6) at (-3,3); \coordinate (4) at (-1,3);
\coordinate (2) at (1,3); \coordinate (8) at (3,3); 

%edge

\draw (1)--(2); \draw (1)--(3); \draw (1)--(4); \draw (1)--(6); \draw (1)--(7);
\draw (1)--(8); \draw (2)--(3); \draw (2)--(4); \draw (2)--(5); \draw (2)--(7);
\draw (2)--(8); \draw (3)--(4); \draw (3)--(5); \draw (3)--(6); \draw (3)--(8);
\draw (4)--(5); \draw (4)--(6); \draw (4)--(7);

%node

\draw [line width=0.05cm, fill=white] (1) circle [radius=0.15] node[below right] {\Large 7}; 
\draw [line width=0.05cm, fill=white] (2) circle [radius=0.15] node[above right] {\Large 8};
\draw [line width=0.05cm, fill=white] (3) circle [radius=0.15] node[above right] {\Large 5};
\draw [line width=0.05cm, fill=white] (4) circle [radius=0.15] node[above left] {\Large 6};
\draw [line width=0.05cm, fill=white] (5) circle [radius=0.15] node[] at (0,6.5) {\Large 3};
\draw [line width=0.05cm, fill=white] (6) circle [radius=0.15] node[below left] {\Large 4};
\draw [line width=0.05cm, fill=white] (7) circle [radius=0.15] node[] at (0,-0.5) {\Large 1};
\draw [line width=0.05cm, fill=white] (8) circle [radius=0.15] node[below right] {\Large 2};

%\node at (5,0) {$$}; 

\end{tikzpicture}
}}
\caption{The graph $G_{1,1,1,1}$}
\label{G1111}
\end{minipage}
\end{figure}
%%%%%%%%%%%%%%%%%%%%%

We can see that $G_{1,1,1}$ has maximal stable sets $\{1,2,3\}$, $\{1,4\}$, $\{2,5\}$ and $\{3,6\}$.
Thus, we have 
$$\kk[\Stab_{G_{1,1,1}}]=\kk[t_0,t_1t_0,t_2t_0,\ldots,t_6t_0,t_1t_2t_0,t_1t_3t_0,t_2t_3t_0,t_1t_2t_3t_0,t_1t_4t_0,t_2t_5t_0,t_3t_6t_0].$$
%,t_2t_0,t_3t_0,t_4t_0,t_5t_0
\end{ex}

The graph $G_{r_1,\ldots,r_n}$ has the following properties:
\begin{prop}\label{G}
\begin{itemize}
\item[(i)] The maximal cliques of $G_{r_1,\ldots,r_n}$ are precisely $Q_0,Q_1,\ldots,Q_n$.
\item[(ii)] A subset $S\subset V(G_{r_1,\ldots,r_n})$ is a maximal stable set of $G_{r_1,\ldots,r_n}$ if and only if $S=\{v_i,v'_i\} \text{ or } \{v_1,\ldots,v_n\}$ for some $i\in [n]$, $v_i\in Q_i^+$ and $v'_i \in Q_i^-$.
\item[(iii)] The graph $G_{r_1,\ldots,r_n}$ is chordal (and hence perfect), but is not a comparability graph.
\item[(iv)] The stable set ring $\kk[\Stab_{G_{r_1,\ldots,r_n}}]$ is Gorenstein, and $\Cl(\kk[\Stab_{G_{r_1,\ldots,r_n}}])\cong\ZZ^n$.
\item[(v)] One has 
$$\calC(G_{r_1,\ldots,r_n})=\{(z_1,\cdots,z_{n}) \in \RR^n : -r_i\le z_i \leq r_i \text{ for $i\in [n]$} \}.$$
\end{itemize}
\end{prop}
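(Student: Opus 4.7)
The proof has five parts, all anchored in the partition $V(G_{r_1,\ldots,r_n}) = \bigsqcup_{i=1}^n Q_i^+ \sqcup \bigsqcup_{i=1}^n Q_i^-$ and the observation that a vertex in $Q_i^+$ lies in $Q_j$ only for $j=i$, while a vertex in $Q_i^-$ lies in $Q_j$ for every $j\in\{0,1,\ldots,n\}\setminus\{i\}$. For part (i), each $Q_i$ is a clique by construction; for maximality, any vertex outside $Q_i$ admits a non-neighbour in $Q_i^+$; and no other maximal cliques exist because every edge lies in some $Q_i$ and any clique meeting $Q_i^+$ is forced into $Q_i$. For part (ii), since $Q_0$ is a clique at most one vertex of a stable set $S$ may lie in $Q_0$; the case $S\cap Q_0=\{w\}$ with $w\in Q_i^-$ forces the remaining vertices into $Q_i^+$ (whence at most one more), while $S\cap Q_0=\emptyset$ allows at most one vertex per $Q_i^+$, and a routine maximality check identifies the two families in the statement.

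Part (iii) has two pieces. Chordality follows from Proposition~\ref{past}: starting from the complete graph $Q_0$, one pastes $Q_1,\ldots,Q_n$ in order, and at step $i$ the intersection of $Q_i$ with the already constructed graph is $Q_0\cap Q_i = Q_0\setminus Q_i^-$, a complete subgraph of $Q_0$. For the failure of comparability (using $n\geq 3$), I would choose one $v_i\in Q_i^+$ and one $w_i\in Q_i^-$ for $i=1,2,3$; the induced subgraph on $\{v_1,v_2,v_3,w_1,w_2,w_3\}$ is isomorphic to $G_{1,1,1}$. Since the class of comparability graphs is closed under taking induced subgraphs, it suffices to show that $G_{1,1,1}$ admits no transitive orientation. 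The plan is to invoke the Ghouila-Houri forcing rule: each non-edge $\{v_i,w_i\}$ has exactly two common neighbours, forcing the orientations of incident edges to align at those neighbours. Fixing any of the six orientations of the central triangle $\{w_1,w_2,w_3\}$ and propagating the forced orientations around the graph produces, in every case, a required edge between two non-adjacent vertices. This case check (equivalently, an inconsistency of $\Gamma$-classes of edges) will be the technically most delicate step of the proposition.

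Part (iv) is immediate from the structure: $|Q_0|=d$ and $|Q_i|=r_i+(d-r_i)=d$ for $i\in[n]$, so all maximal cliques have cardinality $d$ and $\kk[\Stab_G]$ is Gorenstein by~\cite{OH06}. For the class group, substituting the partition of $V(G)$ into~(\ref{relation_div_stab}) shows $\calD_{n+k}=\calD_i$ for $k\in Q_i^+$ and $\calD_{n+k}=\sum_{j\neq i}\calD_j = -\calD_i$ for $k\in Q_i^-$, so all coordinate relations are redundant given $\sum_{j=0}^n\calD_j=0$; hence $\Cl(\kk[\Stab_G])\cong\ZZ^n$.

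For part (v), taking $I=\{i\}$, $J=\{0\}$ in~(\ref{conic_stab}) gives $X_{IJ}^+=Q_i^+$ and $X_{IJ}^-=Q_i^-$, both of cardinality $r_i$, so the resulting inequality is exactly $-r_i\leq z_i\leq r_i$. For the reverse inclusion, for arbitrary multisets $I,J$ with $|I|=|J|\geq 1$ and $I\cap J=\emptyset$, set $c_k=a_k-b_k$ where $a_k,b_k$ are the multiplicities. The partition analysis yields $m_{IJ}(v)=c_k$ on $Q_k^+$ and $m_{IJ}(v)=-c_k$ on $Q_k^-$ (using $|I|=|J|$), whence $\sum_{v\in X_{IJ}^+}m_{IJ}(v)=\sum_{k=1}^n r_k|c_k|$. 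The upper bound in~(\ref{conic_stab}) then reads $\sum_{k=1}^n c_k z_k \leq |I|+\sum_k r_k|c_k|-1$, which follows from $|z_k|\leq r_k$ (yielding $\sum c_k z_k\leq\sum|c_k|r_k$) combined with $|I|\geq 1$; the lower bound is symmetric. All other defining inequalities of $\calC(G_{r_1,\ldots,r_n})$ thus collapse to the stated box inequalities.
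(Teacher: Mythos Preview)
Your proposal is correct, and for parts (i)--(iv) it follows essentially the same path as the paper: the same clique/stable-set casework, the same pasting argument for chordality (the paper likewise reduces non-comparability to the induced $G_{1,1,1}$, though it leaves the verification for that small graph to the reader, whereas you outline an explicit forcing argument), and the same appeals to \cite{OH06} and the clique count for Gorensteinness and the class group.

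Part (v) is where you diverge. The paper first computes the weights explicitly, obtaining (\ref{Gweight}) and hence the reduced list $\{\eb_0,\pm\eb_1,\ldots,\pm\eb_n\}$, then invokes Lemma~\ref{lem} (via Remark~\ref{rem_sta}) to determine which normal directions $\nb$ give facets of $\calW'(\kk[\Stab_{G_{r_1,\ldots,r_n}}])=\calC'(G_{r_1,\ldots,r_n})$; this yields only the pairs $(I,J)$ with $|I|=|J|=1$, and a short check shows the $z_i-z_j$ inequalities are redundant. You instead work directly from the definition~(\ref{conic_stab}): after extracting the box inequalities from $I=\{i\}$, $J=\{0\}$, you verify by the partition analysis (which gives $m_{IJ}(v)=c_k$ on $Q_k^+$ and $-c_k$ on $Q_k^-$, whence $\sum_{v\in X_{IJ}^+}m_{IJ}(v)=\sum_k r_k|c_k|$) that every inequality in (\ref{conic_stab}) is already implied by $|z_k|\le r_k$. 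Your route is more elementary and avoids Lemma~\ref{lem} entirely; the paper's route is more structural and produces the weight table (\ref{Gweight}) as a byproduct, which is reused in the proof of Theorem~\ref{main3}. If you adopt your argument, you will still need to record (\ref{Gweight}) separately before that theorem.
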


\begin{proof}
(i) By the definition of $G_{r_1,\ldots,r_n}$, we see that $Q_0,Q_1,\ldots,Q_n$ are cliques of $G_{r_1,\ldots,r_n}$ and we can easily check that these are maximal. 
If there exists a maximal clique $Q$ of $G_{r_1,\ldots,r_n}$ which is different from $Q_0,Q_1,\ldots,Q_n$, then there is an element $u_i\in Q\setminus Q_i$ for each $i\in \{0,1,\ldots,n\}$.
We may assume that $u_0\in Q_1$.
Then, since $u_0\in Q_1^+$ and $u_1\notin Q_1$, we have $\{u_0,u_1\}\notin E(G_{r_1,\ldots,r_n})$, a contradiction to $u_0,u_1\in Q$.

\medskip

(ii) It follows from the definition of $G_{r_1,\ldots,r_n}$ that $\{v_i,v'_i\}$ and $\{v_1,\ldots,v_n\}$ are maximal stable sets of $G_{r_1,\ldots,r_n}$.
Suppose that $S$ is a maximal stable set of $G_{r_1,\ldots,r_n}$.
If there exists a vertex $v'_0 \in S\cap Q_0$, then $i\in [n]$ with $v'_0\notin Q_i$ is uniquely determined and we can see that $S=\{v'_0,v_i\}$ for some $v_i \in Q_i^+$ since $\{v'_0,v\} \in E(G_{r_1,\ldots,r_n})$ for any $v \in V(G_{r_1,\ldots,r_n})\setminus Q_i^+$.
If $S\cap Q_0=\emptyset$, then we have $S=\{v_1,\ldots,v_n\}$ for some $v_i\in Q_i^+$ ($i\in [n]$) since $S\subset V(G_{r_1,\ldots,r_n})\setminus Q_0=Q_1^+\cup \cdots \cup Q_n^+$.

\medskip

(iii) We can see that $G_{r_1,\ldots,r_n}$ arises from $n+1$ complete graphs with $d$ vertices by pasting them.
Thus, by Proposition~\ref{past}, $G_{r_1,\ldots,r_n}$ is chordal.

If $G_{r_1,\ldots,r_n}$ is a comparability graph, then so is any induced subgraph of $G_{r_1,\ldots,r_n}$. 
However, for any $n\ge 3$ and $r_1,\ldots,r_n$, the graph $G_{r_1,\ldots,r_n}$ contains $G_{1,1,1}$ as an induced subgraph and we can check that $G_{1,1,1}$ is not a comparability graph, a contradiction.

\medskip

(iv) Since $G_{r_1,\ldots,r_n}$ is perfect and maximal cliques $Q_0,Q_1,\ldots,Q_n$ of $G_{r_1,\ldots,r_n}$ have the same cardinality $d$, the stable set ring $\kk[\Stab_{G_{r_1,\ldots,r_n}}]$ is Gorenstein. Moreover, we have $\Cl(\kk[\Stab_{G_{r_1,\ldots,r_n}}])\cong\ZZ^n$ because $G_{r_1,\ldots,r_n}$ has $n+1$ maximal cliques.

\medskip

(v) From (\ref{weight_stab}), we can see that for $j\in \{0,1,\ldots,2d+n\}$,
\begin{align}\label{Gweight}
\beta_i=\begin{cases}
%\eb_0 \; & \text{ if } i=0; \\
\eb_j \; & \text{ if  $i=j$ or $i-n \in Q_j^+$}; \\
-\eb_j \; & \text{ if $i-n \in Q_j^-$}.
\end{cases}
\end{align}
Therefore, $\{\bar{\beta}_1,\ldots,\bar{\beta}_{n'}\}=\{\eb_0, \pm\eb_1,\ldots,\pm\eb_n\}$ with $n'=2n+1$. 
%We let $\bar{\beta}_i=\eb_i$, $\bar{\beta}_{n+i}=-\eb_i$ for $i\in [n]$ and $\bar{\beta}_{2n+1}=(-1,\ldots,-1)$.
Let $\nb \in \ZZ^n$. If there are $i_1,\ldots,i_{n-1}\subset [2n+1]$ such that $\bar{\beta}_{i_k}$'s are linearly independent and $\langle \nb, \bar{\beta}_{i_k} \rangle =0$ for all $k\in [n-1]$, then $\nb$ must be the following form:
$$\nb=m\eb_i \quad \text{ or } \quad \nb=m(\eb_i-\eb_j) \quad (m\in \ZZ\setminus \{0\})$$
for some $i, j\in [n]$. 
By Lemma~\ref{lem} (the observation mentioned in Remark~\ref{rem_sta}), we have
\begin{equation*}
\begin{split}
\calC'(G_{r_1,\ldots,r_n})&=\calW'(\kk[\Stab_{G_{r_1,\ldots,r_n}}]) \\
&\begin{split}
\; =\Big\{(z_1,\cdots,z_{n}) \in \RR^n : -1-|X_{\{i\}\{j\}}^-|\le z_i-z_j &\leq 1+|X_{\{i\}\{j\}}^+| \\
&\text{ for $i,j\in \{0,1,\ldots,n\}$}\Big\}.
\end{split}
\end{split}
\end{equation*}
Moreover, we can determine $X_{\{i\}\{j\}}^{\pm}$ defined in (\ref{XIJ}) as follows: for $i,j\in [n]$,
$$X_{\{i\}\{0\}}^+=Q_i^+, \quad X_{\{i\}\{0\}}^-=Q_i^-, \quad X_{\{i\}\{j\}}^+=Q_i^+\cup Q_j^- \; \text{ and } \; X_{\{i\}\{j\}}^-=Q_j^+\cup Q_i^-.$$

Hence, we get
\begin{align*}
\calC(G_{r_1,\ldots,r_n})=\{(z_1,\cdots,z_{n}) &\in \RR^n : -r_i\le z_i \leq r_i \text{ for $i\in [n]$}, \\
&   -r_i-r_j\le z_i-z_j \leq r_i+r_j \text{ for $i,j\in [n]$}\}.
\end{align*}
Clearly, the inequality $-r_i-r_j\le z_i-z_j \leq r_i+r_j$ can be omitted, and hence we obtain the desired result.
\end{proof}

\medskip

%%%%%%%%%%%%%%%%%%%%%%%%%%%%%%%%%%%%%%%%%%%%%%%%%%%%%%%%%%%%%%%%%%%%%%%%%%%%%%%%%%%%%%%%%%%%%%%%%%%%%%%%%%%
\subsection{Preliminaries on non-commutative resolutions}
Before proving Theorem~\ref{main3}, we recall the definition of NCCRs and prepare some notation and lemmas.

\begin{defi}
Let $R$ be a CM normal domain, let $M \neq 0$ be a reflexive $R$-module, and let $E=\End_R(M)$.
Moreover, let $\gldim E$ denote the global dimension of $E$.
\begin{itemize}
\item We call $E$ a {\em non-commutative resolution} ({\em NCR}, for short) of $R$ if $\gldim E<\infty$. 
\end{itemize}
In addition, suppose that $R$ is Gorenstein.
\begin{itemize}
\item We call $E$ a {\em non-commutative crepant resolution} ({\em NCCR}, for short) of $R$ 
if $E$ is an NCR and is an MCM $R$-module.
\item Moreover, we say that an NCCR $E$ is \textit{splitting} if $M$ is a finite direct sum of rank one reflexive $R$-modules. A splitting NCCR is also called ``toric NCCR'' when $R$ is a toric ring (see \cite{Boc}). 
\end{itemize}
\end{defi}
\noindent Since conic divisorial ideals of a toric ring $R$ are rank one reflexive MCM $R$-modules, the endomorphism ring $E$ of the finite direct sum of some of them is a toric NCCR if $E$ is an MCM $R$-module and $\gldim E<\infty$.

\medskip

We use the same notation as in Section~\ref{pre_toric}.
%As discussed in \cite[Section 10]{SpVdB}, we can write conic divisorial ideals by using those as follows. 
Let $\calA=\mod(G,S)$ be the category of finitely generated $G$-equivariant $S$-modules. Given $\chi \in X(G)$, let 
%\begin{align*}
$P_\chi=V_\chi \otimes_\kk S$.
% \;\text{ and }\; M_\chi = P_\chi^G. 
%\end{align*}
%let $M_\chi$ be an $R$-module of the form $(S \otimes_\kk V_\chi)^G$. 
Note that $P_\chi \in \calA$ and $M_\chi = P_\chi^G$.
%Note that $\chi$ corresponds to a Weil divisor $\sum_{i=1}^na_i\calD_i$ and we have $M_{-\chi} \cong T(a_1,\ldots,a_n)$. 
For a subset $\calL \subset X(G)$, we set
$$\displaystyle P_\calL=\bigoplus_{\chi \in \calL}P_\chi \;\text{ and }\; \Lambda_\calL=\End_\calA(P_\calL).$$
Moreover, for $\chi \in X(G)$, let $P_{\calL,\chi}=\Hom_\calA(P_\calL,P_\chi)$. 
%Furthermore, let $$M_\calL=\bigoplus_{\chi \in \calL}M_\chi \;\text{ and }\; E=\End_R(M_\calL).$$

%\medskip

Let $Y(G)$ denote the group of one-parameter subgroups of $G$ and let $Y(G)_\RR=Y(G) \otimes_\ZZ \RR$. 
Note that $Y(G) \cong \ZZ^r$ and $Y(G)_\RR \cong \RR^r$. 
We say that $\chi \in X(G)$ is {\em separated from $\calL$ by $\lambda \in Y(G)_\RR$} 
if it holds that $\langle \lambda,\chi \rangle < \langle \lambda,\chi' \rangle$ for each $\chi' \in \calL$. 

\medskip

Our goal is to choose $\calL\subset \calW(R)\cap X(G)$ such that $E=\End_R(M_\calL)$ becomes an NCCR, where $R=S^G$ and $M_{\calL}=\bigoplus_{\chi \in \calL}M_\chi$.
To show $\gldim E<\infty$, we use the following facts:

\begin{itemize}
\setlength{\parskip}{0pt} 
\setlength{\itemsep}{5pt}
\item We see that $\Hom_{\calA}(P_{\chi},P_{\chi'})\cong \Hom_R(M_\chi,M_{\chi'})$ for any $\chi,\chi'\in X(G)$ (\cite[Lemma~3.3]{SpVdB}), which shows that $\gldim \Lambda_\calL < \infty$ implies $\gldim E <\infty$.
%We note that $\Lambda_{\calL}^G=\End_R(M_{\calL})=E$ and that $\gldim \Lambda_\calL < \infty$ implies $\gldim E <\infty$ (see \cite[Section 3]{HN}).

\item If $\pdim_{\Lambda_\calL}P_{\calL,\chi}<\infty$ for all $\chi \in X(G)$, where $\pdim_{\Lambda_\calL} P_{\calL,\chi}$ denotes the projective dimension of $P_{\calL,\chi}$ over $\Lambda_\calL$, then $\gldim \Lambda_\calL<\infty$ (\cite[Lemma 10.1]{SpVdB}).

\item  By the same argument as in \cite[Section 10.3]{SpVdB}, we can see that if $\pdim_{\Lambda_\calL}P_{\calL,\chi}<\infty$ for each $\chi \in \widetilde{\calL}$, where $\calL \subset \calW(R)\cap X(G)\subset \widetilde{\calL}\subset X(G)$, 
then $\pdim_{\Lambda_\calL}P_{\calL,\chi}<\infty$ for all $\chi \in X(G)$.

\end{itemize}

\noindent Moreover, we note the following fact to show that $E$ is an MCM $R$-module:
\begin{itemize}
\item Since $\End_R(M_{\calL})\cong \bigoplus_{\chi,\chi'\in \calL}M_{\chi-\chi'}$, the endomorphism ring $E$ is an MCM $R$-module if $M_{\chi-\chi'}$ is an MCM $R$-module for any $\chi,\chi' \in \widetilde{\calL}$. 
\end{itemize}
By summarizing those facts, we obtain the following lemma:

\begin{lem}\label{lem_NCCR}
Let the notation be the same as above.
Then, $E$ is an NCCR of $R$ if there exists $\widetilde{\calL}\subset X(G)$ satisfying the following two conditions:
\begin{itemize}
\item[(a)] $\chi-\chi' \in \widetilde{\calL}$ for any $\chi,\chi'\in \calL$, and $M_\chi$ is an MCM $R$-module for any $\chi \in \widetilde{\calL}$.
\item[(b)] $\calW(R)\cap X(G)\subset \widetilde{\calL}$ and $\pdim_{\Lambda_\calL}P_{\calL,\chi}<\infty$ for each $\chi \in \widetilde{\calL}$.
\end{itemize}
\end{lem}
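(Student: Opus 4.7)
The plan is straightforward because the statement is essentially a packaging of the three bullet-point facts recorded immediately before the lemma. I would organize the proof around the two requirements in the definition of a (splitting/toric) NCCR: that $E = \End_R(M_\calL)$ be an MCM $R$-module, and that $\gldim E < \infty$.

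For the MCM part, I would start from the standard isomorphism
\[
\End_R(M_\calL) \;\cong\; \bigoplus_{\chi,\chi'\in\calL} M_{\chi-\chi'}
\]
recalled just above the lemma. Condition (a) says precisely that every index $\chi-\chi'$ that appears on the right lies in $\widetilde{\calL}$, and the second half of (a) says that each such $M_{\chi-\chi'}$ is MCM. Since a finite direct sum of MCM modules is MCM, the module $E$ itself is MCM over $R$.

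For finite global dimension I would chain together the three recalled facts in reverse order. By the hypothesis in (b), $\pdim_{\Lambda_\calL}P_{\calL,\chi}<\infty$ for every $\chi\in\widetilde{\calL}$; since $\calL\subset \calW(R)\cap X(G)\subset\widetilde{\calL}$, the third bullet (the argument of \cite[Section 10.3]{SpVdB}) upgrades this to $\pdim_{\Lambda_\calL}P_{\calL,\chi}<\infty$ for all $\chi\in X(G)$. The second bullet (\cite[Lemma 10.1]{SpVdB}) then yields $\gldim \Lambda_\calL <\infty$, and the first bullet, together with the identification $\Lambda_\calL^G=E$, gives $\gldim E<\infty$.

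Combining the two conclusions shows that $E$ is an NCR of the Gorenstein normal domain $R$ which is simultaneously an MCM $R$-module, i.e., an NCCR. I do not anticipate a genuine obstacle: the work has already been done in the preparatory remarks, and the lemma simply records the cleanest hypothesis on the auxiliary set $\widetilde{\calL}$ needed to invoke all three bullets at once.
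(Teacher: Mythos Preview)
Your proposal is correct and matches the paper's own approach exactly: the paper does not give a separate proof of the lemma but simply states it as a summary of the three bullet-point facts and the isomorphism $\End_R(M_\calL)\cong\bigoplus_{\chi,\chi'\in\calL}M_{\chi-\chi'}$ recorded just before it. Your organization---condition~(a) for the MCM property via that direct-sum decomposition, and condition~(b) chained through the three bullets for $\gldim E<\infty$---is precisely the intended argument.
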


\noindent In addition, we give another lemma to verify the condition (b) in Lemma~\ref{lem_NCCR}.

\begin{lem}\label{lem:finite2}
Let the notation be the same as above.
\begin{itemize}
\item[(i)] $($\cite[Section 10.1]{SpVdB}$)$ If $\chi$ is in $\calL$, then $P_{\calL, \chi}$ is a right projective $\Lambda_\calL$-module, and hence $\pdim_{\Lambda_\calL}P_{\calL,\chi}<\infty$.

\item[(ii)] $($\cite[Lemma 10.2]{SpVdB}$)$ Let $\chi \in X(G)$ be separated from $\calL$ by $\lambda \in Y(G)_\RR$. Then, we obtain the acyclic complex 
\begin{align*}
0 \rightarrow \bigoplus_{\mu_{d_\lambda}}P_{\calL,\mu_{d_\lambda}} \rightarrow \cdots \rightarrow \bigoplus_{\mu_1}P_{\calL,\mu_1} \rightarrow P_{\calL,\chi} \rightarrow 0, 
\end{align*}
where for each $p \in [d_\lambda]$ with $d_\lambda=|\{ i \in [m] : \langle \beta_i,\lambda \rangle >0\}|$, we let 
$\mu_p=\chi+\beta_{i_1}+\cdots+\beta_{i_p}$ with $\{i_1,\ldots,i_p\} \subset [m]$ and $\langle \beta_{i_j}, \lambda \rangle >0$ for all $j\in [p]$. \\
This implies that $\pdim_{\Lambda_\calL}P_{\calL,\chi}<\infty$ if $\pdim_{\Lambda_\calL}P_{\calL,\mu_p}<\infty$ for each $\mu_p$.
\end{itemize}
\end{lem}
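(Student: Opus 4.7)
My plan for part~(i) is essentially formal. Since $\chi\in\calL$, the summand $P_\chi$ of $P_\calL=\bigoplus_{\chi'\in\calL}P_{\chi'}$ is split off by the canonical inclusion and projection. Applying $\Hom_\calA(P_\calL,-)$ to this splitting exhibits $P_{\calL,\chi}=\Hom_\calA(P_\calL,P_\chi)$ as a direct summand of $\Hom_\calA(P_\calL,P_\calL)=\Lambda_\calL$, regarded as a rank-one free right $\Lambda_\calL$-module via precomposition. A direct summand of a free module is projective, so $\pdim_{\Lambda_\calL}P_{\calL,\chi}\le 0$.

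For part~(ii) my plan is to build the acyclic complex as the image under $\Hom_\calA(P_\calL,-)$ of a Koszul-type resolution in $\calA$. Put $W^+=\bigoplus_{\langle\beta_i,\lambda\rangle>0}V_{\beta_i}\subset W$, so $\dim W^+=d_\lambda$. The weight vectors spanning $W^+$ form a regular sequence in $S=\Sym W$, and the associated Koszul complex, tensored on the left by $V_\chi$, yields the exact sequence
\begin{equation*}
0\to V_\chi\otimes_\kk\bigwedge\nolimits^{d_\lambda}\!W^+\otimes_\kk S\to\cdots\to V_\chi\otimes_\kk W^+\otimes_\kk S\to V_\chi\otimes_\kk S\to V_\chi\otimes_\kk(S/SW^+)\to 0.
\end{equation*}
Decomposing each exterior power into irreducibles identifies the $p$-th term with $\bigoplus_{\mu_p}P_{\mu_p}$, where the $\mu_p$'s are exactly those in the statement. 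Applying the left-exact functor $\Hom_\calA(P_\calL,-)$ produces the desired complex, augmented on the right by the extra term $\Hom_\calA\bigl(P_\calL,V_\chi\otimes_\kk S/SW^+\bigr)$.

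The main step, and the only real obstacle, is to show this extra term vanishes, so that what remains is genuinely acyclic (not merely exact away from the last place). For this, I would compute the $G$-weights of $V_\chi\otimes_\kk S/SW^+$: since $S/SW^+$ has a $\kk$-basis of monomials in the weight vectors $w_j$ with $\langle\beta_j,\lambda\rangle\le 0$, every weight $\nu$ appearing has the form $\nu=\chi+\sum_j c_j\beta_j$ with $c_j\ge 0$ and $\langle\beta_j,\lambda\rangle\le 0$. Hence $\langle\lambda,\nu\rangle\le\langle\lambda,\chi\rangle$, which by the separation hypothesis is strictly less than $\langle\lambda,\chi'\rangle$ for every $\chi'\in\calL$. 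For a torus action, $\Hom_\calA(P_{\chi'},M)\cong(V_{-\chi'}\otimes_\kk M)^G$ picks out the $\chi'$-isotypic part of $M$, so the strict inequality forces $\Hom_\calA(P_{\chi'},V_\chi\otimes_\kk S/SW^+)=0$ for every $\chi'\in\calL$, and summing over $\calL$ gives the required vanishing. Once acyclicity is established, the resolution is finite with each term $\bigoplus P_{\calL,\mu_p}$, so finiteness of $\pdim_{\Lambda_\calL}P_{\calL,\mu_p}$ for each $\mu_p$ forces finiteness of $\pdim_{\Lambda_\calL}P_{\calL,\chi}$, as claimed.
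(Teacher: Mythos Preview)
The paper does not prove this lemma at all; it simply records the statements with citations to \v{S}penko--Van den Bergh (Section~10.1 and Lemma~10.2). So there is no in-paper argument to compare against, and your reconstruction of the Koszul-type proof is exactly the standard one from the cited source.

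Your argument is essentially correct, but one phrase needs tightening. You call $\Hom_\calA(P_\calL,-)$ ``left-exact'' and then implicitly use that the resulting sequence is \emph{exact}, not merely a complex. Left-exactness alone would not give you acyclicity in the middle. The missing observation is that each $P_{\chi'}=V_{\chi'}\otimes_\kk S$ is projective in $\calA=\mod(G,S)$: indeed $\Hom_\calA(V_{\chi'}\otimes_\kk S,\,M)\cong\Hom_G(V_{\chi'},M)$, and since $G$ is a torus (hence linearly reductive over a field of characteristic~$0$), taking $G$-isotypic components is exact. Therefore $\Hom_\calA(P_\calL,-)$ is an exact functor, the image of the Koszul resolution is genuinely exact, and once you kill the augmentation term by your weight-separation computation, the remaining complex is acyclic as required. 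With that one-word fix (``left-exact'' $\to$ ``exact'', justified by projectivity of $P_\calL$), both parts go through.
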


\medskip

%%%%%%%%%%%%%%%%%%%%%%%%%%%%%%%%%%%%%%%%%%%%%%%%%%%%%%%%%%%%%%%%%%%%%%%%%%%%%%%%%%%%%%%%%%%%%%%%%%%%%%%%%%%%%%
\subsection{Proof of Theorem~\ref{main3}}

This subsection is devoted to giving a proof of Theorem~\ref{main3}.

We recall
$$\calL=\{(z_1,\cdots,z_{n}) \in \ZZ^n : 0\le z_i \leq r_i \text{ for $i\in [n]$} \}$$
and set 
$$
\widetilde{\calL}=\calC(G_{r_1,\ldots,r_n})\cap \ZZ^n. 
$$
Note that $\widetilde{\calL}=\{(z_1,\cdots,z_{n}) \in \ZZ^n : -r_i\le z_i \leq r_i \text{ for $i\in [n]$} \}$ from Proposition~\ref{G} (v).

\medskip

In \cite[Theorem 3.6]{HN}, a toric NCCR is given for the Segre product of polynomial rings by taking $\calL,\widetilde{\calL}\subset X(G)$ (these are different from $\calL$ and $\widetilde{\calL}$ defined above, but very similar) and using the same arguments as in Lemma~\ref{lem:finite2}.
Indeed, the proof of Theorem~\ref{main3} can be obtained by the same procedure as that of \cite[Theorem 3.6]{HN}.
However, we give a self-contained proof since $\kk[\Stab_{G_{r_1,\ldots,r_n}}]$ is not the Segre product of polynomial rings.

\begin{proof}[Proof of Theorem~\ref{main3}]
It is enough to show that $\widetilde{\calL}$ satisfies the conditions (a) and (b) in Lemma~\ref{lem_NCCR}.
First, we can check (a) since
$$
\{\chi-\chi' : \chi,\chi'\in \calL\}=\{(z_1,\cdots,z_{n}) \in \ZZ^n : -r_i\le z_i \leq r_i \text{ for $i\in [n]$} \}=\widetilde{\calL}, 
$$
and $M_{\chi}$ is a conic divisorial ideal of $R$ for any $\chi\in \widetilde{\calL}$.

\medskip

%Furthermore, we can show (b) by using the same argument as the proof of \cite[Theorem 3.6]{HN}, but 

Next, we show (b).
We see that $\calL\subset \calW(R)\cap X(G)=\calC(G_{r_1,\ldots,r_n})\cap X(G)=\widetilde{\calL}$.
We have already computed the weights of $R$ in the proof of Proposition~\ref{G} (v).
We set $\bar{\beta}_i=\eb_i$, $\bar{\beta}_{n+i}=-\eb_i$ for $i\in [n]$ and $\bar{\beta}_{2n+1}=\eb_0$.
Note that for $i\in [n]$, the multiplicity $m_i$ of $\bar{\beta}_i$ is equal to $r_i+1$.
We also set 
$$
\widetilde{\calL}_j=\{(z_1,\cdots,z_n) \in \widetilde{\calL} : z_j \geq 0, \cdots, z_n \geq 0\} 
$$
for $j\in [n]$ and   
$$
\widetilde{\calL}_j(k)=\{(z_1,\cdots,z_n) \in \widetilde{\calL}_j : z_j \geq -k \}.
$$
for $0 \leq k \leq r_j$.
Note that $\calL=\widetilde{\calL}_1 \subset \widetilde{\calL}_2 \subset \cdots \subset \widetilde{\calL}_n \subset \widetilde{\calL}$,
$\widetilde{\calL}_{j+1}(0)=\widetilde{\calL}_j(r_j)$ for $j\in [n-1]$ and $\widetilde{\calL}=\widetilde{\calL}_n(r_n)$.
Moreover, for any $j\in [n]$, $k\in [r_j]$ and any $\chi \in \widetilde{\calL}_j(k) \setminus \widetilde{\calL}_{j}(k-1)$, we can see that 
$$\langle \eb_j, \chi \rangle < \langle \eb_j, \chi^\prime\rangle \;\; \text{for any }\chi'\in \widetilde{\calL}_j (k-1).$$ 
Hence, $\chi$ is separated from $\widetilde{\calL}_j(k-1)$ by $\eb_j$, 
and we have that $\langle \eb_j, \bar{\beta}_j  \rangle > 0$ and $\langle \eb_j, \bar{\beta}_{i} \rangle \leq 0$ for any $i\not=j$. 

\medskip

We prove that $\pdim_{\Lambda_\calL}P_{\calL,\chi}<\infty$ for any $j\in [n]$, $k\in \{0,1,\ldots,r_j\}$ and $\chi \in \widetilde{\calL}_j(k)$ by the induction on $j$ and $k$.

\noindent ($j=1$ and $k=0$)
In this case, we have $\widetilde{\calL}_1(0)=\calL$.
Hence, $\pdim_{\Lambda_\calL}P_{\calL,\chi}<\infty$ for any $\chi \in \calL$ by Lemma~\ref{lem:finite2} (i).

\noindent ($j=1$ and $k>0$)
Assume that $\pdim_{\Lambda_\calL}P_{\calL,\chi}<\infty$ for any $\chi \in \widetilde{\calL}_1(k-1)$. 
Then, for any $\chi' \in \widetilde{\calL}_1(k) \setminus \widetilde{\calL}_1(k-1)$, we see that 
$\chi^\prime + \beta_{i_1} + \cdots + \beta_{i_p} \in \widetilde{\calL}_1(k-1)$, where $\beta_{i_1}=\cdots=\beta_{i_p}=\bar{\beta}_1$ and $p\in [r_1+1]$. 
Hence, $\pdim_{\Lambda_\calL}P_{\calL,\chi^\prime}<\infty$ by Lemma~\ref{lem:finite2} (ii). 
%Therefore, $\pdim_{\Lambda_\calL}P_{\calL,\chi}<\infty$ for any $\chi \in \widetilde{\calL}_1$. 

\noindent ($j>1$ and $k\ge 0$) 
Assume that $\pdim_{\Lambda_\calL}P_{\calL,\chi}<\infty$ for any $\chi \in \widetilde{\calL}_{j-1}(r_{j-1})$.
The case $k=0$ is trivial since $\widetilde{\calL}_j(0)=\widetilde{\calL}_{j-1}(r_{j-1})$.
Suppose that $k>0$ and $\pdim_{\Lambda_\calL}P_{\calL,\chi}<\infty$ for any $\chi \in \widetilde{\calL}_j(k-1)$. 
Then, for any $\chi' \in \widetilde{\calL}_j(k) \setminus \widetilde{\calL}_j(k-1)$, we see that 
$\chi^\prime + \beta_{i_1} + \cdots + \beta_{i_p} \in \widetilde{\calL}_j(k-1)$, where $\beta_{i_1}=\cdots=\beta_{i_p}=\bar{\beta}_j$ and $p\in [r_j+1]$.
Hence, $\pdim_{\Lambda_\calL}P_{\calL,\chi^\prime}<\infty$ by Lemma~\ref{lem:finite2} (ii).
%Therefore, $\pdim_{\Lambda_\calL}P_{\calL,\chi}<\infty$ for any $\chi \in \widetilde{\calL}_j$. 

\medskip

Consequently, we obtain that $\pdim_{\Lambda_\calL}P_{\calL,\chi}<\infty$ for any $\chi \in \widetilde{\calL}_n(r_n)=\widetilde{\calL}$. 

\end{proof}

\bigskip

%%%%%%%%%%%%%%%%%%%%%%%%%%%%%%%%%%%%%%%%%%%%%%%%%%%%%%%%%%%%%%%%%%%%%%%%%%

\end{document}